\newtheorem{theorem}{Theorem}[section]
\newtheorem{lemma}[theorem]{Lemma}
\newtheorem{proposition}[theorem]{Proposition}
\newtheorem{corollary}[theorem]{Corollary}
\newtheorem{claim}[theorem]{Claim}
\newtheorem{construction}[theorem]{Construction}
\theoremstyle{definition}
\newtheorem{definition}[theorem]{Definition}
\newtheorem{notation}[theorem]{Notation}
\newtheorem{example}[theorem]{Example}
\newtheorem{remark}[theorem]{Remark}
\numberwithin{equation}{section}
\newenvironment{claimproof}[1][Proof of Claim]{\begin{proof}[#1]}{\end{proof}}
\newcommand{\cat}[1]{\mathsf{#1}}
\newcommand{\Set}{\cat{Set}}
\newcommand{\CH}{\cat{CH}}
\newcommand{\CompOrd}{\cat{CompOrd}}
\newcommand{\CoAlg}{\cat{CoAlg}}
\newcommand{\Alg}{\cat{Alg}}
\newcommand{\op}{\mathrm{op}}
\newcommand{\EM}{\mathrm{EM}}
\newcommand{\df}{\coloneqq}
\newcommand{\V}{\mathbb{V}}
\newcommand{\Vl}{\mathbb{V}^{\downarrow}}
\newcommand{\Vu}{\mathbb{V}^{\uparrow}}
\newcommand{\Vc}{\mathbb{V}^{\mathrm{c}}}
\let\d\relax
\DeclareMathOperator{\d}{{\downarrow}}
\let\u\relax
\DeclareMathOperator{\u}{{\uparrow}}
\DeclareMathOperator{\ud}{{\updownarrow}}
\DeclareMathOperator{\SP}{SP}
\newcommand{\dG}{\partial}
\newcommand{\StComp}{\cat{StComp}}
\newcommand{\MA}{\cat{MA}}
\newcommand{\BA}{\cat{BA}}
\DeclareMathOperator{\ima}{im}
\DeclareMathOperator{\colim}{colim}
\title{Duality for coalgebras for Vietoris and monadicity}
\author[M. Abbadini]{Marco Abbadini}
\address{School of Computer Science,
University of Birmingham,
B15 2TT Birmingham, United Kingdom}
\email{marco.abbadini.uni@gmail.com}
\author[I. Di Liberti]{Ivan Di Liberti}
\address{Department of Mathematics,
Stockholm University,
Stockholm, Sweden
}
\email{diliberti.math@gmail.com}
\keywords{compact Hausdorff space, stably compact space, Vietoris functor, coalgebra, duality, modal logic, monadicity, infinitary variety}
\subjclass[2020]{Primary: 03B45, 54B20, 06D50, 18C15, 18C10. Secondary: 06F30, 03C05, 08A65, 54B30, 06E15}
\begin{document}
\maketitle

\begin{abstract}
    We prove that the opposite of the category of coalgebras for the Vietoris endofunctor on the category of compact Hausdorff spaces is monadic over $\Set$. We deliver an analogous result for the upper, lower and convex Vietoris endofunctors acting on the category of stably compact spaces. We provide axiomatizations of the associated (infinitary) varieties.
    This can be seen as a version of J\'onsson-Tarski duality for modal algebras beyond the 0-dimensional setting.
\end{abstract}

   {
   \hypersetup{linkcolor=black}
   \tableofcontents
   }

\section*{Introduction}

After the original contribution by Stone \cite{Stone1936}, duality theory bloomed in several directions and with different motivations. A research line investigates dualities between algebra and geometry, offering representation theorems for geometric objects into (possibly infinitary) varieties. Among those, Duskin duality \cite{Duskin1969} shows that the category of compact Hausdorff spaces is dually equivalent to a(n infinitary) variety and has opened the door for several variations of Stone-like duality. Duskin duality has provided a more algebraic interpretation for Tietze extension theorem, Stone-Weierstrass theorem \cite{reggio2021beth} and several other results of this kind, leading to a duality theory based on the closed interval $[0,1]$ in place of the more usual Sierpis\'nki space \cite{HofmannNora2023}.
This research direction has been very active in the past years, including some contributions of the first author who has recently proved that the opposite of the category of Nachbin's compact ordered spaces and continuous order-preserving maps is a variety too \cite{Abbadini2019,AbbadiniReggio2020}.

Another research direction was initiated by J\'onsson and Tarski in 1951 \cite{JonssonTarski1951,JonssonTarski1952}, building on the logical interpretation of Stone duality and offering a topological representation of modal logic. This approach was later perfected by Esakia \cite{Esakia1974} and is nowadays a milestone of topological methods in modal logic \cite{blackburn2001modal}. In its present state, this framework is packaged in the analysis of several variations of the \textit{Vietoris} functor and its coalgebras. A synthetic and modern way to state Esakia's version of J\'onsson-Tarski duality is to say that the category of coalgebras for the Vietoris endofunctor over Boolean spaces is dually equivalent to that of modal algebras \cite{VenemaVosmaer2014}.

In the past years there has been a growing interest in amalgamating these two research lines \cite{bezhanishvili2015modal,bezhanishvili2022modal}. Indeed, the Vietoris functor admits very natural extensions to several categories of compact spaces, and thus the study of \textit{Stone duality above dimension} zero \cite{marra2017stone} has attracted a lot of attention. In \cite{HofmannNevesEtAl2019,HofmannNevesEtAl2018} the authors show quasivariety results for some of these categories. 

This paper offers a more grounded counterpart of \cite{kurz2002modal}:  we study the category of coalgebras for (several variations of) the Vietoris functor and deliver several positive variety results. More specifically we study the categories $\CH$ of compact Hausdorff spaces, $\CompOrd$ of Nachbin's compact ordered spaces and $\StComp$ of stably compact spaces. We obtain the following results.

\begin{itemize}
    \item[\ref{t:monadicity-vietoris}] The opposite $\CoAlg(\V)^\op$ of the category  of coalgebras for the Vietoris functor $\V$ on compact Hausdorff spaces is monadic over $\Set$.
    \item[\ref{t:compord-monadicity}] The opposite $\CoAlg(\Vc)^\op$ of the category of coalgebras for the convex Vietoris functor $\Vc$ on compact ordered spaces is monadic over $\Set$.
    \item[\ref{t:stcomp-monadicity}] The opposites $\CoAlg(\Vu)^\op$ and $\CoAlg(\Vl)^\op$  of the categories of coalgebras for the upper Vietoris $\Vu$ and lower Vietoris functor $\Vl$ on stably compact spaces is monadic over $\Set$.
\end{itemize}

Besides these very abstract results, we combine methods coming from categorical logic and general topology to provide a complete axiomatization of these categories of algebras as follows.
\begin{itemize}
    \item 
    The algebraic theory of $\CoAlg(\V)^\op$ can be obtained by adding to the algebraic theory of $\CH^\op$ the unary operator $\Box$ (or, equivalently, $\Diamond$) and appropriate axioms.
    \item 
    The algebraic theory of $\CoAlg(\Vc)^\op$ can be obtained by adding to the algebraic theory of $\CompOrd^\op$ the unary operators $\Box$ and $\Diamond$ and appropriate axioms.
    \item 
    The algebraic theory of $\CoAlg(\Vu)^\op$ can be obtained by adding to the algebraic theory of $\StComp^\op$ the unary operator $\Box$ and appropriate axioms.
    \item
    The algebraic theory of $\CoAlg(\Vl)^\op$ can be obtained by adding to the algebraic theory of $\StComp^\op$ the unary operator $\Diamond$ and appropriate axioms.
\end{itemize}

\subsubsection*{Structure of the paper}

In the first section, we briefly recall the categorical technology that sits at the core of our proof strategy. In the second section, we study the Vietoris functor on compact Hausdorff spaces and show the monadicity of its opposite category of coalgebras. The third section is devoted to the convex Vietoris functor, acting on compact ordered spaces, while the fourth section is devoted to the upper and lower Vietoris functors. Finally, the last section discusses the axiomatization of these opposite categories of coalgebras.

\section{Endofunctors, algebras and monadicity} \label{s:algebras}

In this brief section, we recall the most relevant results of the theory of \textit{variators}. We will use this technology to ground our main results in the next sections. Most of the content is expository, or folklore, and is organised in the most convenient way for our purposes.

\subsection{Algebras for an endofunctor and their monadicity}

Recall that, given an endofunctor $F \colon \cat{C} \to \cat{C}$, we can define its category of algebras $\Alg(F)$, whose objects are pairs $(X, f \colon FX \to X)$ and a morphism from $(X,f)$ to $(Y,g)$ is a map $h \colon X \to Y$ in $\cat{C}$ that respect the structure of algebras, i.e the diagram below is commutative. 

\[\begin{tikzcd}[ampersand replacement=\&]
	FX \& FY \\
	X \& Y
	\arrow["h"', from=2-1, to=2-2]
	\arrow["f"', from=1-1, to=2-1]
	\arrow["g", from=1-2, to=2-2]
	\arrow["Fh", from=1-1, to=1-2]
\end{tikzcd}\]

Variants of this theory have been considered in the literature, for example when the endofunctor is pointed $\eta \colon 1 \to F$. In that case, the notion of algebra has to change accordingly, but the general theory remains quite similar. We shall discuss in the last section the logical implications of these technical differences.  We refer to \cite{Adamek1974,trnkova1975free,Barr1970,barr2019coequalizers} for a general introduction to the topic.

\begin{remark}
    It comes with no surprise that the theory contained in this section admits a straightforward dualization to the case of coalgebras for an endofunctor and comonadicity for a comonad.
\end{remark}

\begin{remark}
    Given an endofunctor $F \colon \cat{C} \to \cat{C}$, we have a forgetful functor $U \colon \Alg(F) \to \cat{C}$ that maps an object $f \colon FX \to X$ to $X$, and a morphism $h \colon X \to Y$ (from $f \colon FX \to X$ to $g \colon FY \to Y$) to $h$ itself.
\end{remark}
	
One of the most compelling problems of this theory since its very start is to decide whether the forgetful functor in the remark above is monadic. Notably, these categories are monadic \textit{whenever they can}, in the sense clarified by the theorem below.

\begin{theorem}[{\cite[Corollary~5.10]{Barr1970}}]\label{t:varietor-monadic}
    The forgetful functor $\Alg(F) \to \cat{C}$ is monadic if and only if it is a right adjoint.
\end{theorem}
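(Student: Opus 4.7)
The plan is to apply Beck's monadicity theorem. The ``only if'' direction is immediate: monadic functors are by definition equivalences over an Eilenberg--Moore category, and the forgetful functor from an Eilenberg--Moore category always has a left adjoint (the free algebra functor). So assume $U \colon \Alg(F) \to \cat{C}$ admits a left adjoint; we must show $U$ is monadic.

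By Beck's theorem it suffices to check that $U$ creates coequalizers of $U$-split parallel pairs. This will follow from the stronger (and essentially classical) fact that for any endofunctor $F$, the forgetful functor $U \colon \Alg(F) \to \cat{C}$ creates all colimits that $F$ preserves. Indeed, given a parallel pair $h, k \colon (X,\alpha) \to (Y,\beta)$ in $\Alg(F)$ whose underlying pair admits a $U$-split coequalizer $q \colon Y \to Z$ in $\cat{C}$, the coequalizer $q$ is split, hence absolute, hence preserved by the arbitrary endofunctor $F$; in particular $Fq$ is a coequalizer of $Fh$ and $Fk$. The composite $q \circ \beta \colon FY \to Z$ coequalizes $Fh, Fk$, because
\[
q \circ \beta \circ Fh \;=\; q \circ h \circ \alpha \;=\; q \circ k \circ \alpha \;=\; q \circ \beta \circ Fk,
\]
so there is a unique $\gamma \colon FZ \to Z$ with $\gamma \circ Fq = q \circ \beta$. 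This is the unique algebra structure on $Z$ making $q$ a morphism of $F$-algebras, and one checks that $q \colon (Y,\beta) \to (Z,\gamma)$ is then a coequalizer of $h,k$ in $\Alg(F)$. Thus $U$ creates $U$-split coequalizers, and Beck's theorem delivers monadicity.

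The only genuine content is the standard lifting argument for algebra structures along absolute coequalizers; I do not anticipate a real obstacle. The main thing to get right is simply the observation that a $U$-split pair yields a \emph{split} coequalizer in $\cat{C}$, which is absolute and therefore automatically preserved by $F$ without any assumption on $F$ beyond functoriality. It is worth flagging that the analogous statement for categories of coalgebras and comonadicity (which is what will actually be used in the sequel, per the remark preceding the theorem) follows by formal duality.
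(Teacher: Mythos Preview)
Your argument is correct and is exactly the standard route via Beck's monadicity theorem: $U$-split coequalizers are absolute, hence preserved by any endofunctor $F$, so the algebra structure on the coequalizer object exists and is unique, and the resulting cocone is a coequalizer in $\Alg(F)$. The only step you gloss over---that the induced $n \colon (Z,\gamma) \to (W,\delta)$ in the universal property is an algebra morphism---follows immediately from $Fq$ being (split) epi, so there is no gap.

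As for comparison with the paper: the paper does not supply its own proof of this statement at all. It merely records the result with a citation to \cite[Corollary~5.10]{Barr1970} and moves on. Your proof is essentially the one behind that citation.
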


We recall from \cite{AdamekTrnkova1990} that a functor $F \colon \cat{C} \to \cat{C}$ is called a \emph{varietor} if the forgetful functor $\Alg(F) \to \cat{C}$ is right adjoint (and hence, by \cref{t:varietor-monadic}, monadic).
In this case, the corresponding monad---denoted by $F^*$--- is called the \emph{algebraically-free monad on $F$}.
Usually, the algebraically-free monad is described via the following construction (see \cite[p.~592]{Adamek1974}, \cite[IV.3.2]{AdamekTrnkova1990} or \cite[Construction~3.10]{AdamekPorst2003}).

\begin{construction}[Ad\'amek's free-algebra construction]
    Let $\cat{C}$ be a cocomplete category.
    For every endofunctor $F$ on $\cat{C}$ and every object $X$ in $\cat{C}$, define a transfinite chain of objects $X^\sharp_i$ ($i$ any ordinal) and connecting morphisms 
    \[
        x_{i,j}^\sharp \colon X_i^\sharp \to X_j^\sharp\quad \text{($i \leq j$)}
    \]
    by the following transfinite induction:
    \begin{description}
        \item[First step] 
        $X_0^\sharp = 0$, $X_1^\sharp = F0 + X$ with $x_{0,1}^\sharp$ the unique morphism $0 \xrightarrow{!} F0 + X$,
		
        \item[Isolated step]
        $X_{i+1}^\sharp = FX_i^\sharp + X$ for all ordinals $i$, $x_{i+1, j+1}^\sharp = Fx_{i,j}^\sharp + X$ for all $i \leq j$.
			
        \item[Limit step]
        $X_j^\sharp = \colim_{i < j} X_i^\sharp$ for all limit ordinals $j$ with colimit cocone $x_{i,j}^\sharp$, $i < j$. 
    \end{description}
\end{construction}
The general intuition behind Ad\'amek's construction is that the chain above should converge to the free algebra. As in any adjoint-functor-theorem-like situation, there is no guarantee in full generality that this happens. One says that the free algebra construction \emph{stops} after $k$ steps ($k$ being an ordinal) provided that $x_{k, k + 1}^\sharp$ is an isomorphism.

\begin{theorem}[{\cite[Proposition~3.14]{AdamekPorst2003}}] \label{t:free-algebra-construction}
    If the free-algebra construction stops after $k$ steps, then $X_k^\sharp$ is a free $F$-algebra on $X$.
    In more detail, denoting the components of $x_{k,k+1}^{-1} \colon F X_k^\sharp + X \to X_k \sharp$ by
    \[
        \alpha \colon FX_k^\sharp \to X_k ^\sharp \quad \text{and} \quad \eta_X \colon X \to X_k^\sharp
    \]
    respectively, these form a free $F$-algebra on $X$.
\end{theorem}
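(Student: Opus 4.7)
The plan is to show that for every $F$-algebra $(A, a \colon FA \to A)$ and every morphism $f \colon X \to A$, there is a unique $F$-algebra homomorphism $\bar f \colon (X_k^\sharp, \alpha) \to (A, a)$ satisfying $\bar f \circ \eta_X = f$. I would climb the transfinite chain $(X_i^\sharp)$, building compatible maps $f_i \colon X_i^\sharp \to A$ by induction, and then exploit the stabilization at step $k$ to identify $f_k$ with $\bar f$.

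For existence, construct a cocone $(f_i)_{i}$ by transfinite induction: take $f_0$ to be the unique map from $0$, set $f_{i+1} \df [a \circ F f_i, f] \colon FX_i^\sharp + X \to A$ at isolated steps using the coproduct description, and at a limit $j$ let $f_j$ be the universal map out of the colimit. In parallel one verifies that $f_j \circ x_{i,j}^\sharp = f_i$ for all $i \leq j$; the successor step is immediate from the formula $x_{i+1, j+1}^\sharp = F x_{i,j}^\sharp + \mathrm{id}_X$ together with the definition of $f_{j+1}$, and the limit step is forced by colimit universality. Now the hypothesis that $x_{k, k+1}^\sharp$ is invertible rewrites the cocone identity as $f_{k+1} = f_k \circ (x_{k, k+1}^\sharp)^{-1}$; precomposing with the two coproduct injections of $FX_k^\sharp + X$ yields exactly the algebra identity $f_k \circ \alpha = a \circ Ff_k$ and the unit identity $f_k \circ \eta_X = f$, so $\bar f \df f_k$ works.

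For uniqueness, given any such $g$, set $g_i \df g \circ x_{i, k}^\sharp$ and show $g_i = f_i$ by transfinite induction on $i$, from which $g = f_k$ at $i = k$. The base and limit cases are immediate. The successor case amounts to comparing $g_{i+1}$ with $f_{i+1} = [a \circ F f_i, f]$ on the two coproduct factors of $X_{i+1}^\sharp$. The two key identifications $x_{i+1, k}^\sharp \circ \iota_{FX_i^\sharp} = \alpha \circ F x_{i, k}^\sharp$ and $x_{i+1, k}^\sharp \circ \iota_X = \eta_X$ both follow by applying $x_{k, k+1}^\sharp$ on the left and invoking the formula $x_{i+1, k+1}^\sharp = F x_{i, k}^\sharp + \mathrm{id}_X$; once these are in hand, the hypotheses $g \circ \alpha = a \circ Fg$ and $g \circ \eta_X = f$ close the successor step. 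The main obstacle, as I see it, is not conceptual but organizational: both halves run on interlocking transfinite inductions whose limit and successor cases must be carefully aligned with the two coproduct components at each level, and one must remember to verify the cocone compatibility of the $f_i$ \emph{in tandem} with their construction so that the limit step of the induction is even well-defined.
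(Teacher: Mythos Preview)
The paper does not give its own proof of this statement; it simply records it as \cite[Proposition~3.14]{AdamekPorst2003} and moves on. Your argument is the standard one (essentially the proof in Ad\'amek--Porst): build the cocone $(f_i)$ over the free-algebra chain by transfinite induction, read off the algebra-homomorphism and unit identities from the stabilization $f_{k+1} = f_k \circ (x_{k,k+1}^\sharp)^{-1}$, and prove uniqueness by a second transfinite induction using the factorizations $x_{i+1,k}^\sharp \circ \iota_{FX_i^\sharp} = \alpha \circ F x_{i,k}^\sharp$ and $x_{i+1,k}^\sharp \circ \iota_X = \eta_X$, which you correctly derive by cancelling the isomorphism $x_{k,k+1}^\sharp$. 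The sketch is correct; the only comment is that the compatibility $f_{j+1} \circ x_{j,j+1}^\sharp = f_j$ when $j$ is a limit ordinal (needed so that the $(f_i)$ really form a cocone through limit stages) deserves an explicit word, since $x_{j,j+1}^\sharp$ is itself defined via the colimit universal property at that stage rather than by the successor formula.
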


We recall from \cite{AdamekTrnkova1990} that a functor $F \colon \cat{C} \to \cat{C}$ with $\cat{C}$ cocomplete is called a \emph{constructive varietor} provided that its free-algebra construction stops for each object $X$ in $\cat{C}$.

\begin{proposition} \label{p:chains}
    If $F$ preserves colimits of $k$-chains ($k$ an infinite limit ordinal), then the free-algebra construction stops after $k$ steps.
\end{proposition}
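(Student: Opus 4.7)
The plan is to prove the statement by showing directly that the canonical map $x_{k,k+1}^\sharp \colon X_k^\sharp \to X_{k+1}^\sharp$ is an isomorphism. I would identify $X_{k+1}^\sharp$ with a colimit of the same chain that defines $X_k^\sharp$, and then conclude by the universal property of colimits.

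The first observation I would use is that, since $k$ is an infinite limit ordinal, the successor ordinals $\{i+1 : i<k\}$ are cofinal in $k$; hence
\[
X_k^\sharp \;=\; \colim_{i<k} X_i^\sharp \;=\; \colim_{i<k} X_{i+1}^\sharp \;=\; \colim_{i<k}\bigl(FX_i^\sharp + X\bigr),
\]
using the isolated-step definition $X_{i+1}^\sharp = FX_i^\sharp + X$. I would then push the colimit past both $F$ and the coproduct with $X$. By hypothesis, $F$ preserves $k$-chain colimits, so $\colim_{i<k} FX_i^\sharp = FX_k^\sharp$. The functor $(-)+X$ preserves every nonempty connected colimit; I would justify this by writing $i \mapsto FX_i^\sharp + X$ as the coproduct, in the functor category $\cat{C}^{k}$, of the diagram $i\mapsto FX_i^\sharp$ with the constant diagram at $X$, and using that $\colim_{i<k}\colon \cat{C}^{k} \to \cat{C}$ is a left adjoint (to the constant-diagram functor) and that the colimit of the constant diagram at $X$ over the nonempty connected chain $k$ is $X$ itself. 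Together these yield
\[
\colim_{i<k}\bigl(FX_i^\sharp + X\bigr) \;=\; FX_k^\sharp + X \;=\; X_{k+1}^\sharp.
\]

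Combining the two displayed chains of equalities gives an isomorphism $X_k^\sharp \cong X_{k+1}^\sharp$. The main bookkeeping step, and the only real obstacle, is to check that this composite isomorphism is precisely $x_{k,k+1}^\sharp$. Since $x_{k,k+1}^\sharp$ is determined by the universal property of $X_k^\sharp = \colim_{i<k} X_i^\sharp$ — being the unique map induced by the cocone with components $X_i^\sharp \xrightarrow{x_{i,i+1}^\sharp} X_{i+1}^\sharp \xrightarrow{} X_{k+1}^\sharp$ obtained from the isolated-step rule $x_{i+1,j+1}^\sharp = Fx_{i,j}^\sharp + X$ — the verification reduces to matching this cocone against the one used in the cofinality-shift isomorphism above. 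This matching is routine but requires care in tracking how the successor-stage connecting maps interact with the colimit cocone at stage $k$; once done, the universal property forces $x_{k,k+1}^\sharp$ to coincide with the isomorphism just constructed, and the free-algebra construction therefore stops after $k$ steps.
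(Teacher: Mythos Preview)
Your argument is correct: the cofinality of successor ordinals in $k$, together with preservation of $k$-chain colimits by $F$ and of connected colimits by $(-)+X$, does identify $X_k^\sharp$ with $X_{k+1}^\sharp$, and the bookkeeping that this identification is $x_{k,k+1}^\sharp$ is indeed routine. The paper does not give its own proof here but simply cites \cite[Corollary~3.17 and Remark~3.16]{AdamekPorst2003}; your direct argument is essentially the standard one found in that reference, so there is no substantive difference in approach, only in whether the details are spelled out.
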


\begin{proof}
    See \cite[Corollary~3.17 and Remark~3.16]{AdamekPorst2003}.
\end{proof}

\begin{proposition} \label{p:preservation-reflexive-coeq}
    If a varietor $F \colon \cat{C} \to \cat{C}$ preserves colimits of a certain type, so does the algebraically-free monad $F^*$ on $F$.
\end{proposition}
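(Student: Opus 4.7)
The plan is to factor the algebraically-free monad as $F^{*} = UL$, where $U \colon \Alg(F) \to \cat{C}$ is the forgetful functor and $L \colon \cat{C} \to \Alg(F)$ is its left adjoint, which exists by the very definition of a varietor. As a left adjoint, $L$ preserves all colimits that $\cat{C}$ admits; hence it is enough to prove that $U$ preserves colimits of the specified type.

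For this, I would invoke the well-known fact that $U$ in fact \emph{creates} every colimit of a shape $\cat{D}$ that $F$ preserves. The argument is a direct diagram chase. Given a diagram $D \colon \cat{D} \to \Alg(F)$ with structure maps $\alpha_i \colon FUD_i \to UD_i$, let $c_i \colon UD_i \to C$ be a colimit cocone for $UD$ in $\cat{C}$. Then the composites $c_i \circ \alpha_i \colon FUD_i \to C$ assemble into a cocone under $FUD$, and the hypothesis that $F$ preserves this particular colimit means that $FC \cong \colim FUD$, yielding a unique morphism $\alpha \colon FC \to C$ through which the cocone factors. One then checks routinely that $(C,\alpha)$ is an $F$-algebra, that each $c_i$ lifts to a morphism in $\Alg(F)$, and that $(C,\alpha)$ is the colimit of $D$ in $\Alg(F)$; uniqueness of the lift is automatic from the uniqueness of $\alpha$.

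Combining the two observations, $F^{*} = UL$ preserves colimits of the prescribed type. I do not expect any substantive obstacle: the entire proof rests on the folklore creation-of-colimits fact for $U$, which is an immediate unpacking of the universal property of a colimit together with the defining commutative square of an $F$-algebra homomorphism.
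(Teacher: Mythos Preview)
Your proof is correct and follows essentially the same route as the paper: factor $F^{*}\cong UL$, use that $L$ preserves all colimits as a left adjoint, and invoke that $U$ creates (hence preserves) any colimit of a shape that $F$ preserves. The paper simply cites Riehl's \emph{Category Theory in Context}, \S5.6.5, for the creation-of-colimits fact (noting that the endofunctor case is even easier since no unit/multiplication compatibilities need to be checked), whereas you spell out the diagram chase; otherwise the arguments are identical.
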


\begin{proof}
    By the same argument in \cite[5.6.5]{riehl2017category}\footnote{The proof is even simpler, because there are fewer conditions to check.}, the forgetful functor $U \colon \Alg(F) \to \cat{C}$ creates the colimits of that type. This finishes the proof. Indeed, calling $L$ the left adjoint to $U$, we have $F^* \cong UL$ by definition, and both functors preserve those colimits.
\end{proof}

\subsection{Monadic functors compose?}
The last categorical prerequisite of the paper concerns another classical problem, i.e.\ whether monadic functors compose. In general, the answer to this question is fairly negative.

\begin{example}
    Any locally presentable category $\cat{K}$ is reflective in (and thus monadic over) a presheaf category $\cat{K} \to \cat{Psh}(\cat{C})$, which is monadic over $\cat{Set}^{\mathrm{Ob}(\cat{C})}$. But if the composition $\cat{K} \to \cat{Psh}(\cat{C}) \to \Set^{\mathrm{Ob}(\cat{C})}$ was monadic, then $\cat{K}$ would be Barr-exact (because categories monadic over $\Set^X$ for $X$ a set are all exact), which is in general not true. A great exemplification of this phenomenon is $\cat{K} = \cat{Cat}$, the category of small categories.
\end{example}

Yet, under some assumptions on the monads, we can indeed infer that the composite of monadic functors is monadic. This relies on a technical analysis of how reflexive coequalizers are constructed in the category of algebras for a monad. We recall that a coreflexive equalizer is an equalizer of a parallel pair $f, g \colon X \to Y$ having a common retraction, i.e.\ a morphism $h \colon B \to A$ such that $h \circ f = h \circ g = 1_X$.

Finally, we can give the following proposition due to \cite{errata-elmendorf1997}. To avoid any confusion with the previous subsection, we call $\cat{C}[S]$ the category of algebras for a monad $S$.

\begin{proposition} \label{p:composite-of-monads}
    Let $S$ be a monad in a category $\cat{C}$ and let $T$ be a monad in the category $\cat{C}[S]$ of $S$-algebras.
    If $T$ preserves reflexive coequalizers in $\cat{C}[S]$, then the category $\cat{C}[S][T]$ of $T$-algebras in $\cat{C}[S]$ is isomorphic to the category $\cat{C}[TS]$ of algebras over the compound monad $TS$ in $\cat{C}$.
    Moreover, the unit of $T$ defines a map $S \to TS$ of monads in $\cat{C}$.
    An analogous assertion holds for comonads. 
\end{proposition}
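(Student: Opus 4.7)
The plan is to establish the isomorphism $\cat{C}[S][T] \cong \cat{C}[TS]$ by constructing an explicit inverse to the canonical comparison functor. To begin, stacking the two free-forgetful adjunctions $F^S \dashv U^S \colon \cat{C} \rightleftarrows \cat{C}[S]$ and $F^T \dashv U^T \colon \cat{C}[S] \rightleftarrows \cat{C}[S][T]$ gives a composite adjunction $F^T F^S \dashv U^S U^T$, whose induced monad on $\cat{C}$ has underlying endofunctor $U^S U^T F^T F^S = U^S T F^S$; this is what is denoted $TS$. The claimed natural transformation $S \to TS$ is obtained by whiskering $\eta^T$ by $U^S (-) F^S$, and the monad-morphism axioms follow from standard string-diagrammatic manipulations.

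The canonical comparison functor $K \colon \cat{C}[S][T] \to \cat{C}[TS]$ sends $(A, a, \alpha)$ to the $TS$-algebra $(A, U^S \alpha \circ U^S T \tilde{a})$ where $\tilde{a} \colon F^S A \to (A, a)$ is the $S$-algebra morphism classified by $a$. I would then construct an inverse $K^{-1}$ as follows. Given $(A, \beta \colon TSA \to A)$, set $a \df \beta \circ U^S \eta^T_{F^S A}$; this yields an $S$-algebra because $S \to TS$ is a morphism of monads. To equip $(A, a)$ with a $T$-action $\alpha \colon T(A, a) \to (A, a)$ in $\cat{C}[S]$, use the canonical presentation of $(A, a)$ as the reflexive coequalizer
\[
F^S S A \; \rightrightarrows \; F^S A \; \longrightarrow \; (A, a)
\]
in $\cat{C}[S]$, whose parallel maps are $\varepsilon^S_{F^S A}$ and $F^S a$ with common section $F^S \eta^S_A$. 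Since $T$ preserves reflexive coequalizers, applying it yields a coequalizer $T F^S S A \rightrightarrows T F^S A \to T(A, a)$ in $\cat{C}[S]$. The plan is then to show that $\beta$ gives rise to a canonical morphism $T F^S A \to (A, a)$ in $\cat{C}[S]$ coequalizing this pair, so by the universal property one obtains $\alpha$; the $T$-algebra axioms for $\alpha$ follow from the monad axioms for $TS$ by further universal-property arguments.

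It then remains to verify $K \circ K^{-1} = \mathrm{id}$ and $K^{-1} \circ K = \mathrm{id}$, together with the compatibility on morphisms; these are diagram chases using the same reflexive-coequalizer presentation and the fact that an $S$-algebra morphism $(A, a) \to (A', a')$ is the same as a morphism $A \to A'$ in $\cat{C}$ compatible with the structures. The comonadic counterpart follows by the evident dualization: arrows reverse, reflexive coequalizers are replaced by coreflexive equalizers, units by counits, and monads by comonads throughout.

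The main obstacle is the construction of $\alpha$ from $\beta$ in the inverse functor. A priori the functor $T$ lives on $\cat{C}[S]$ and has no explicit description on non-free $S$-algebras, so a $TS$-structure $\beta$ on $A$, which only directly interacts with the free $S$-algebra $F^S A$, cannot be transported to an action of $T$ on $(A, a)$ without additional input. The preservation of reflexive coequalizers is precisely what allows one to describe $T(A, a)$ as a coequalizer of $T$ applied to free $S$-algebras, and so to transport $\beta$ to the desired $\alpha$; without this hypothesis the inverse functor cannot even be written down, which explains why the assumption enters at exactly this step.
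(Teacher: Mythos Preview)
The paper does not give a proof of this proposition at all: it simply attributes the result to \cite{errata-elmendorf1997} and moves on. So there is no ``paper's own proof'' to compare against.

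That said, your sketch is the standard argument, and it is essentially correct. A few remarks on points you leave implicit. First, in constructing $K^{-1}$ you need that the map induced by $\beta$ from $T F^S A$ to $(A,a)$ is genuinely a morphism in $\cat{C}[S]$; this uses the $TS$-algebra associativity axiom for $\beta$ together with the description of the $S$-action on $U^S T F^S A$, and is a small but necessary verification. Second, checking that this morphism coequalizes the pair $T\varepsilon^S_{F^S A}$, $T F^S a$ again reduces (after applying $U^S$, which is faithful) to the $TS$-algebra axioms for $\beta$. Third, the verification that the resulting $\alpha$ satisfies the $T$-algebra unit and associativity laws is another pair of diagram chases exploiting the same coequalizer presentation and the uniqueness clause of the universal property. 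None of these is difficult, but each does require a short explicit argument rather than just the phrase ``diagram chase''.

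Your closing paragraph correctly identifies why preservation of reflexive coequalizers is exactly the hypothesis needed: the canonical presentation $F^S S A \rightrightarrows F^S A \to (A,a)$ is a reflexive coequalizer in $\cat{C}[S]$ (indeed $U^S$-split), and without knowing that $T$ preserves it one has no handle on $T(A,a)$ in terms of data visible to $\beta$.
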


\begin{theorem} \label{t:composition-monadic}
    Let $T \colon \cat{C} \to \cat{C}$ be a varietor that preserves reflexive coequalizers.
    For every monadic functor $G \colon \cat{C} \to \cat{D}$, the composite $\Alg(T) \xrightarrow{U} \cat{C} \xrightarrow{G} \cat{D}$ is monadic.
\end{theorem}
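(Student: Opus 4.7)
The plan is to factor the composite $GU$ as the forgetful functor of a single monad on $\cat{D}$, and then read off monadicity from that factorization. The key engine is \cref{p:composite-of-monads}, which fuses two monads into one provided the ``outer'' monad preserves reflexive coequalizers.

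Concretely, I would proceed as follows. Since $T$ is a varietor, \cref{t:varietor-monadic} makes $U \colon \Alg(T) \to \cat{C}$ monadic; let $T^*$ denote the induced algebraically-free monad on $\cat{C}$ and use the standard identification $\Alg(T) \cong \cat{C}[T^*]$ under which $U$ becomes the forgetful functor of $T^*$. Since $G$ is monadic, let $S$ be the induced monad on $\cat{D}$ so that $\cat{C} \cong \cat{D}[S]$, with $G$ identified with the forgetful of $S$. After these identifications $T^*$ is a monad on $\cat{D}[S]$, and $GU$ is the two-step forgetful $\cat{D}[S][T^*] \to \cat{D}[S] \to \cat{D}$.

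To apply \cref{p:composite-of-monads} I need to verify that $T^*$ preserves reflexive coequalizers in $\cat{D}[S]$. The assumption gives this for the endofunctor $T$, and \cref{p:preservation-reflexive-coeq} upgrades the property from $T$ to its algebraically-free monad $T^*$ (reflexive coequalizers being a legitimate ``colimits of a certain type''). Then \cref{p:composite-of-monads} produces a monad $T^*S$ on $\cat{D}$ together with an isomorphism $\cat{D}[S][T^*] \cong \cat{D}[T^*S]$ compatible with the forgetful-to-$\cat{D}$ functors. Under this isomorphism $GU$ is identified with the forgetful $\cat{D}[T^*S] \to \cat{D}$, which is monadic by construction.

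The only genuinely content-bearing step is the reflexive-coequalizer check, which is precisely the reason the hypothesis on $T$ appears in the statement; everything else is transport of structure across the monadic identifications supplied by the two hypotheses. I expect the main subtlety, if any, to be in pinning down that the isomorphism of \cref{p:composite-of-monads} really does intertwine the two-step forgetful $\cat{D}[S][T^*] \to \cat{D}$ with the single-step forgetful from $T^*S$-algebras, but this is built into the construction of the compound monad, whose unit is the pasting of the two unit transformations.
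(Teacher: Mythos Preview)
Your proposal is correct and follows exactly the same route as the paper: upgrade preservation of reflexive coequalizers from $T$ to $T^*$ via \cref{p:preservation-reflexive-coeq}, then invoke \cref{p:composite-of-monads} to collapse the two-step forgetful functor into the forgetful functor of a single compound monad on $\cat{D}$. Your write-up is in fact more explicit than the paper's two-line proof, and your closing remark about the isomorphism intertwining the forgetful functors correctly identifies a point the paper leaves implicit.
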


\begin{proof}
    By \cref{p:preservation-reflexive-coeq}, the algebraically-free functor $F^*$ on $F$ preserves reflexive colimits.
    By \cref{p:composite-of-monads}, the composite $\Alg(T) \xrightarrow{U} \cat{C} \xrightarrow{G} \cat{D}$ is monadic.
\end{proof}

As we previously said, this statement admits an expected dualization to the comonad case.

\section{Vietoris on compact Hausdorff spaces}

Classical modal logic extends classical propositional logic by adding unary operators $\Diamond p$ (usually interpreted as possibility) and $\Box p$ (usually interpreted as necessity), together with appropriate rules.
Just like the algebras of classical propositional logic are Boolean algebras, the algebras of classical modal logic are modal algebras; a modal algebra is a Boolean algebra with a unary operation $\Box$ satisfying $\Box 1 = 1$ and $\Box(x \land y) = \Box x \land \Box y$.
(One can then define $\Diamond x \coloneqq \lnot \Box \lnot x$.)

To represent modal algebras one builds on top of Stone duality \cite{Stone1936}, which states that the category of Boolean algebras and homomorphisms is dually equivalent to the category of Boolean spaces (also known as Stone spaces or profinite spaces)---i.e.\ compact Hausdorff spaces with a basis of closed open sets---and continuous functions.
Building on Stone duality, J\'onsson-Tarski duality states that the category $\MA$ of modal algebras and homomorphisms is dually equivalent to the category of descriptive frames, which are Boolean spaces equipped with a binary relation $R$ (known as ``accessibility relation'') satisfying certain properties.
In its present form it was established by Esakia \cite{Esakia1974} and Goldblatt \cite{Goldblatt1976} (but see also Halmos \cite{Halmos1956}).

One of the properties satisfied by an accessibility relation $R$ is that for each $x \in X$ the forward image $R[\{x\}]$ of $x$ is closed.
So, the relation $R$ can be alternatively described as a function from $X$ to the set $\V X$ of closed subsets of $X$.
The functions $X \to \V X$ arising in this way are precisely those that are continuous with respect to the so-called \emph{Vietoris topology} on $\V X$ \cite{Vietoris1922}.
The Vietoris construction gives rise to an endofunctor $\V_{\cat{BooSp}}$ on the category of Boolean spaces and continuous functions.
It turns out that the category $\MA$ of modal algebras is dually equivalent to the category of coalgebras for $\V_{\cat{BooSp}}$ \cite{Abramsky1988,KupkeKurzEtAl2004,KupkeKurzEtAl2005}.

The axioms of modal algebras are equational, i.e.\ they have the form
\[
\forall x_1\dots\forall x_n\  \tau(x_1, \dots, x_n) = \sigma(x_1, \dots, x_n),
\]
where $\tau$ and $\sigma$ are terms. 
Thus, modal algebras form an equational class (also known as a \emph{variety}) of finitary algebras.
Then, the opposite of the category of coalgebras for $\V_{\cat{BooSp}}$ is a variety of finitary algebras.
In this section, we prove that a similar result holds when we replace Boolean spaces with compact Hausdorff spaces.
We will first recall the definition of the Vietoris functor $\V$ on compact Hausdorff spaces and then we prove that the opposite of the category of coalgebras for $\V$ is monadic over $\Set$ (\cref{t:composition-monadic}), i.e.\ is equivalent to a variety of possibly infinitary algebras.
In this way, we obtain an analogue of J\'onsson-Tarski duality in the larger setting of compact Hausdorff spaces.

We build on the fact that, as for Boolean spaces, the opposite of the category of compact Hausdorff spaces is monadic over $\Set$, as witnessed by the representable functor $\hom_{\CH}(-,[0,1]) \colon \CH^\op \to \Set$.
This fact was observed by Duskin in \cite[{}5.15.3]{Duskin1969} (for a full proof see \cite[Chapter~9, Theorem 1.11]{BarrWells2005}).

\begin{notation}
    We let $\CH$ denote the category of compact Hausdorff spaces and continuous functions.
\end{notation}

\begin{definition}[\cite{Vietoris1922}]
    Given a compact Hausdorff space $X$, we topologize the set $\V X$ of closed subsets of $X$ with the \emph{Vietoris topology}, generated by the sets
    \begin{align*}
        \Box U & \df \{K \in \V X \mid K \subseteq U \} && \text{($U$ open of $X$),}\\
        \Diamond U & \df \{K \in \V X \mid K \cap U \neq \varnothing\} && \text{($U$ open of $X$).}
    \end{align*}
    The space $\V X$ is a compact Hausdorff space \cite{Vietoris1922}, called the \emph{Vietoris hyperspace of $X$}.
\end{definition}

\begin{definition}
    We let $\V \colon \CH \to \CH$ denote the functor that maps
    \begin{itemize}
        \item a compact Hausdorff space $X$ to its Vietoris hyperspace $\V X$.
        \item a morphism $f \colon X \to Y$ to the function $\V f \colon \V X \to \V Y$, $K \mapsto f[K]$.
    \end{itemize}
    We call $\V$ the \emph{Vietoris functor (on compact Hausdorff spaces)}.
\end{definition}

To prove that the opposite of the category of coalgebras for $\V$ is monadic over $\Set$, we first observe that it is monadic over $\CH^\op$ (which in turn is monadic over $\Set$), using the following fact.

\begin{proposition}[{\cite[Corollary 3.37]{HofmannNevesEtAl2019}}] \label{p:preserves-cod-limits}
    The Vietoris functor $\V \colon \CH \to \CH$ preserves codirected limits. 
\end{proposition}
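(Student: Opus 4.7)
The plan is to show by hand that, for any codirected diagram $(X_i, f_{ij})$ in $\CH$ with limit $X = \lim_i X_i$ and projections $\pi_i \colon X \to X_i$, the canonical comparison map
\[
\phi \colon \V X \longrightarrow \lim_i \V X_i, \qquad K \longmapsto (\pi_i[K])_i,
\]
is a homeomorphism. It is well-defined because $\pi_i[K]$ is compact hence closed in $X_i$ and the coherence $f_{ij}[\pi_i[K]] = \pi_j[K]$ holds on the nose, and it is continuous because the maps $\V \pi_i \colon \V X \to \V X_i$ form a cone (functoriality of $\V$). Since both the source and target are in $\CH$, it will be enough to prove $\phi$ is bijective: a continuous bijection between compact Hausdorff spaces is automatically a homeomorphism.

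For injectivity, suppose $\pi_i[K] = \pi_i[L]$ for every $i$ and pick $x \in K$. For each index $k$ the closed set $L \cap \pi_k^{-1}(\pi_k(x))$ is non-empty, and codirectedness makes the family $\{L \cap \pi_k^{-1}(\pi_k(x))\}_k$ downward-filtered: whenever indices $i_1, i_2$ admit a common predecessor $k$, the equality $\pi_{i_l} = f_{k,i_l} \circ \pi_k$ forces $L \cap \pi_k^{-1}(\pi_k(x)) \subseteq L \cap \pi_{i_l}^{-1}(\pi_{i_l}(x))$. By compactness of $L$ the total intersection is non-empty, producing $y \in L$ with $\pi_i(y) = \pi_i(x)$ for all $i$; since the $\pi_i$ jointly separate points in the limit, $y = x$, hence $K \subseteq L$ and symmetrically $L \subseteq K$. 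For surjectivity, given a coherent family $(K_i)$ with $f_{ij}[K_i] = K_j$, set $K \df \bigcap_i \pi_i^{-1}(K_i)$. This is closed in $X$ and satisfies $\pi_i[K] \subseteq K_i$ trivially; for the reverse inclusion I consider the restricted codirected diagram $(K_i, f_{ij}|_{K_i})$ in $\CH$, whose transition maps are surjective precisely because $f_{ij}[K_i] = K_j$. Its limit inside $\prod_i X_i$ coincides with $K$, and by the classical fact that a codirected limit of compact Hausdorff spaces along surjective bonding maps has surjective projections, each $K \to K_i$ is surjective, giving $\pi_i[K] = K_i$.

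The main obstacle is the surjectivity step, where codirectedness is used twice: first to guarantee that the restrictions $f_{ij}|_{K_i}$ assemble into an honest diagram, and second via the Bourbaki-style surjectivity-of-projections result that I would rather quote than reprove. Everything else reduces to a compactness/finite-intersection argument of the kind standard for limits in $\CH$.
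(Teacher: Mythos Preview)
Your argument is correct. The paper does not actually prove this proposition: it simply imports it as \cite[Corollary~3.37]{HofmannNevesEtAl2019}, so there is no ``paper's proof'' to compare against in the strict sense.

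That said, your hands-on verification is exactly in the spirit of what the paper does later for the convex Vietoris functor on compact ordered spaces (\cref{p:convex preserves-codirected}), where the Bourbaki criterion (\cref{t:Bourbaki-compord}) is invoked: condition~\eqref{i:separation} there is your injectivity step (points are separated by the projections, pushed to closed subsets via a compactness/filtered-intersection argument), and condition~\eqref{i:images} is your surjectivity step (the image of the limit projection equals the intersection of the images of the bonding maps, which is precisely the ``surjective bonding maps give surjective projections'' fact you quote). So your decomposition matches the paper's template for the harder ordered case, specialised to the trivial order. The only thing worth tightening is the remark that $\lim_i \V X_i$ lies in $\CH$ before appealing to ``continuous bijection between compact Hausdorff spaces is a homeomorphism''; this is of course automatic since $\CH$ is complete, but it is the one place where a reader might pause.
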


\begin{corollary} \label{c:monadic}
    The Vietoris functor $\V  \colon \CH \to \CH$ is a covarietor.
\end{corollary}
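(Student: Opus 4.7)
The plan is to invoke the straightforward dualization of the theory developed in Section~\ref{s:algebras}, as announced in the remark after the definition of $\Alg(F)$. Concretely, one wants to build cofree $\V$-coalgebras via the dual of Ad\'amek's construction: start from a compact Hausdorff space $X$, set $X_0^\sharp = 1$ (the one-point space, terminal in $\CH$), set $X_{i+1}^\sharp = \V X_i^\sharp \times X$ at isolated steps with the obvious transition maps, and take cofiltered limits at limit steps. Since $\CH$ is complete, this construction makes sense for every $X$.

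The key input is \cref{p:preserves-cod-limits}, which gives that $\V$ preserves codirected limits; in particular $\V$ preserves the limit of any $k^\op$-cochain for any infinite limit ordinal~$k$ (and, most economically, already for $k = \omega$). Dualizing \cref{p:chains}, this implies that the cofree-coalgebra construction for $\V$ stops after $\omega$ steps at every $X$. By the dual of \cref{t:free-algebra-construction}, the resulting $X_\omega^\sharp$, equipped with the components of $(x_{\omega,\omega+1}^\sharp)^{-1}$, is a cofree $\V$-coalgebra on $X$. Hence the forgetful functor $\CoAlg(\V) \to \CH$ has a right adjoint, which by the dual of \cref{t:varietor-monadic} (and the definition of covarietor) is exactly what it means for $\V$ to be a covarietor.

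There is no serious obstacle: the entire content of the proof lies in noting that codirected-limit preservation is precisely the hypothesis needed to run the dual Ad\'amek construction to convergence at stage $\omega$. The only thing worth double-checking is that the completeness of $\CH$ justifies the dualized construction (it does, since $\CH$ has all small limits), and that \cref{p:chains} indeed dualizes verbatim (it does, being a purely formal statement about chains and their colimits being sent to chains of algebras).
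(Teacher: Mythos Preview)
Your proposal is correct and follows essentially the same route as the paper: the paper's proof simply says that by \cref{p:preserves-cod-limits} the functor $\V$ preserves $\omega^\op$-limits, and then invokes (the duals of) \cref{p:chains} and \cref{t:free-algebra-construction} to conclude. You have merely unpacked those references by spelling out the dual Ad\'amek construction explicitly.
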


\begin{proof}
    By \cref{p:preserves-cod-limits}, $\V$ preserves $\omega^\op$-limits. Therefore, by \cref{p:chains,t:free-algebra-construction}, $\V$ is a covarietor.
\end{proof}

Let $U \colon \CoAlg(\V ) \to \CH$ be the forgetful functor that maps 
\begin{enumerate}
    \item a coalgebra $f \colon X \to \V X$ to the space $X$,
    \item a morphism $h \colon X \to Y$ (from $f \colon X \to \V X$ to $g \colon Y \to \V Y$) to $h$ itself.
\end{enumerate}

\begin{corollary}
    The forgetful functor $U \colon \CoAlg(\V ) \to \CH$ is comonadic.
\end{corollary}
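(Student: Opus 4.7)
The plan is to observe that this corollary follows essentially by unpacking definitions from \cref{c:monadic} together with the dual of \cref{t:varietor-monadic}. First, recall that, dually to a varietor, a functor $F$ is a covarietor precisely when the forgetful functor $\CoAlg(F)\to \cat{C}$ is a left adjoint; this is the definition upon which \cref{c:monadic} is built. Thus \cref{c:monadic} already delivers a left adjoint to $U \colon \CoAlg(\V) \to \CH$, namely the cofree-coalgebra functor.

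Next, I would invoke the comonad-side analogue of \cref{t:varietor-monadic}: the forgetful functor $\CoAlg(F) \to \cat{C}$ is comonadic if and only if it is a left adjoint. This dualization is explicitly sanctioned by the remark in \cref{s:algebras} asserting that the entire theory admits a straightforward dualization to the coalgebra/comonad setting. Applying this equivalence to $F=\V$ yields the claim.

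There is no genuine obstacle: the only content is the identification of ``covarietor'' with ``left adjoint forgetful functor'', followed by a pointer to the dualized Barr theorem. If one wished to be more self-contained, one could instead remark that the cofree coalgebra construction produced by \cref{t:free-algebra-construction} (dualized) generates the comonad whose coalgebras are recovered by the obvious comparison functor, and that this comparison is an isomorphism of categories by the same argument as in the algebra case.
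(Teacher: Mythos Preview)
Your proposal is correct and matches the paper's reasoning exactly: the paper states this corollary without proof, treating it as immediate from \cref{c:monadic} (that $\V$ is a covarietor) together with the dual of \cref{t:varietor-monadic}, which is precisely the argument you spell out. You have simply made explicit what the paper leaves implicit.
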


As we had discussed before, we aim to show that $\CoAlg(\V)^\op$ is monadic over $\Set$.
We will do it by showing that the composite of the two monadic functors $U^\op \colon \CoAlg(\V)^\op \to \CH^\op$ and $\hom_\CH(-,[0,1]) \colon \CH^\op \to \Set$ is monadic.
In order to do so we shall verify that the hypotheses of \cref{t:composition-monadic} from the previous section are verified, i.e.\ that $\V \colon \CH \to \CH$ preserves coreflexive equalizers.
In \cite{TownsendVickers2014}, C.\ Townsend and S.\ Vickers prove that the lower powerlocale functor, the upper powerlocale functor and the Vietoris powerlocale functor preserve coreflexive equalizers (respectively, Propositions 66, 68 and 70 in \cite{TownsendVickers2014}).
We provide a point-based proof in the case of compact Hausdorff spaces.

\begin{proposition} \label{p:Vietoris-refl-coeq}
    The Vietoris functor $\V  \colon \CH \to \CH$ preserves coreflexive equalizers.
\end{proposition}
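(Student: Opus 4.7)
The plan is a direct pointwise verification. Let $f, g \colon X \to Y$ be a coreflexive pair in $\CH$ with common retraction $h \colon Y \to X$ (so $h \circ f = h \circ g = \mathrm{id}_X$), and let $e \colon E \hookrightarrow X$ be their equalizer, computed as $E = \{x \in X : f(x) = g(x)\}$ with the subspace topology ($E$ is closed in $X$ because $Y$ is Hausdorff). The equalizer of $\V f, \V g$ in $\CH$ is the closed subspace $\{K \in \V X : f[K] = g[K]\}$ of $\V X$, so the task is to prove that $\V e \colon \V E \to \V X$ is, up to homeomorphism, this inclusion.

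First, I would check that $\V e$ is a topological embedding with image $\{K \in \V X : K \subseteq E\}$. Injectivity is immediate and the image description is tautological. For the topology, every open of $E$ has the form $V \cap E$ for some open $V$ of $X$, and a routine computation on Vietoris subbasic opens gives $\Box_X V \cap \V e(\V E) = \V e(\Box_E(V \cap E))$ and the analogous identity for $\Diamond$, so $\V e$ is a homeomorphism onto its image.

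The remaining and only substantive step is the set-theoretic identity $\{K \in \V X : K \subseteq E\} = \{K \in \V X : f[K] = g[K]\}$, and this is the one place where the coreflexivity hypothesis enters. The forward inclusion is clear. For the reverse, given $K$ with $f[K] = g[K]$ and $x \in K$, pick $x' \in K$ with $f(x) = g(x')$ and apply $h$ to obtain $x = h(f(x)) = h(g(x')) = x'$, whence $f(x) = g(x)$. The universal property then follows automatically: any continuous map into $\V X$ equalizing $\V f$ and $\V g$ factors set-theoretically through $\V E$, and continuity of the factorization is supplied by the embedding property of $\V e$. The only real subtlety is pinpointing this use of $h$, since without coreflexivity the maps $f$ and $g$ could permute the points of $K$ nontrivially while still preserving $f[K]$ and $g[K]$.
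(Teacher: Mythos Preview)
Your proof is correct and follows essentially the same route as the paper: both reduce the question to the set-theoretic identity $\{K \in \V X : K \subseteq E\} = \{K \in \V X : f[K] = g[K]\}$ and establish the nontrivial inclusion by the same retraction trick ($f(x) = g(x')$ implies $x = x'$ via the common retraction). The only cosmetic difference is that the paper dispatches the topological side in one line by invoking that the forgetful functor $\CH \to \Set$ preserves and reflects equalizers, whereas you verify directly that $\V e$ is an embedding; note that this step could also be shortened by observing that any continuous injection from a compact space to a Hausdorff space is automatically a closed embedding.
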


\begin{proof}
    Let $h \colon E \to X$ be an equalizer of two morphisms $f,g \colon X \rightrightarrows Y$ in $\CH$ with a common retraction, and let us prove that $\V h$ is an equalizer of $\V f$ and $\V g$.
    Since the forgetful functor $\CH \to \Set$ preserves and reflects equalizers, it is enough to prove that (the underlying function of) $\V  h$ is the equalizer in $\Set$ of (the underlying functions of) $\V  f$ and $\V  g$.
    By functoriality of $\V $, we have $\V  f \circ \V  h = \V  g \circ \V  h$.
    The function $\V  h$ is injective because $h$ is injective.
    Let $K \in \V  X$ be such that $(\V f)(K) = (\V g) (K)$, i.e.\ $f[K] = g[K]$.
    We should prove that $K$ belongs to the image of $\V h$.
    Since $f[K] = g[K]$, for every $x \in K$ there is $x' \in K$ such that $f(x) = g(x')$.
    Since $f$ and $g$ have a common retraction $k \colon Y \to X$, we have $x = kf(x) = kg(x') = x'$, and so $f(x) = g(x') = g(x)$.
    Therefore, for every $x \in K$ we have $f(x) = g(x)$.
    Thus, $K \subseteq \ima h$, and hence $K = h[h^{-1}[K]]$, i.e.\ $(\V h)(h^{-1}[K]) = K$.
    Therefore, $K$ belongs to the image of $\V  h$.
    Thus, $\V h$ is the equalizer of $\V f$ and $\V g$ in $\Set$, and hence also in $\CH$.
\end{proof}

\begin{theorem} \label{t:composition-is-monadic-Vietoris}
    Let $G$ be a comonadic functor from $\CH$ to a category $\cat{C}$.
    The composite $\CoAlg(\V ) \xrightarrow{U} \CH \xrightarrow{G} \cat{C}$ is comonadic. 
\end{theorem}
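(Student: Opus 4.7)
The plan is to invoke the dual version of \cref{t:composition-monadic}, whose existence is explicitly noted in the final sentence of Section~\ref{s:algebras}. Concretely, the dual statement reads: if $T \colon \cat{C} \to \cat{C}$ is a covarietor that preserves coreflexive equalizers, then for every comonadic functor $G \colon \cat{C} \to \cat{D}$, the composite $\CoAlg(T) \xrightarrow{U} \cat{C} \xrightarrow{G} \cat{D}$ is comonadic.

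With this dualized statement in hand, I just need to check the two hypotheses on $T = \V \colon \CH \to \CH$. First, $\V$ is a covarietor by \cref{c:monadic}, which itself follows from the preservation of codirected limits in \cref{p:preserves-cod-limits} together with \cref{p:chains,t:free-algebra-construction} (dualized). Second, $\V$ preserves coreflexive equalizers by \cref{p:Vietoris-refl-coeq}. The comonadicity of $G$ is assumed, so the dual of \cref{t:composition-monadic} applies directly and yields the desired comonadicity of $G \circ U$.

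There is no serious obstacle here: all the work has been done in the preceding propositions. The only thing worth spelling out is the dualization, i.e.\ that the argument behind \cref{t:composition-monadic} (via \cref{p:preservation-reflexive-coeq} and \cref{p:composite-of-monads}, whose comonad version is explicitly included in its statement) goes through verbatim in the opposite categories, so that reflexive coequalizers become coreflexive equalizers and ``monadic'' becomes ``comonadic''. Hence the proof consists of a single sentence citing \cref{c:monadic}, \cref{p:Vietoris-refl-coeq}, and the dual of \cref{t:composition-monadic}.
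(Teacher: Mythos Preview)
Your proposal is correct and matches the paper's proof essentially verbatim: the paper cites \cref{c:monadic} for $\V$ being a covarietor, \cref{p:Vietoris-refl-coeq} for preservation of coreflexive equalizers, and then applies (the dual of) \cref{t:composition-monadic}. Your additional remarks on why the dualization goes through are accurate but are left implicit in the paper.
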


\begin{proof}
    By \cref{c:monadic}, $\V$ is a covarietor.
    By \cref{p:Vietoris-refl-coeq}, $\V$ preserves coreflexive equalizers.
    By \cref{t:composition-monadic}, the composite $\CoAlg(\V) \xrightarrow{U} \CH \xrightarrow{G} \cat{C}$ is comonadic.
\end{proof}

\begin{theorem} \label{t:monadicity-vietoris}
    $\CoAlg(\V)^\op$ is monadic over $\Set$.
\end{theorem}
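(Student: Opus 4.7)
The plan is to apply \cref{t:composition-is-monadic-Vietoris} to a comonadic functor $G \colon \CH \to \Set^\op$ obtained by dualizing the Duskin monadicity result recalled earlier in this section. Concretely, Duskin's theorem says that $\hom_{\CH}(-,[0,1]) \colon \CH^\op \to \Set$ is monadic, which is precisely the statement that its opposite functor $G \colon \CH \to \Set^\op$ is comonadic. So all the ingredients of \cref{t:composition-is-monadic-Vietoris} are in place, since that theorem was stated exactly for composites of the form $\CoAlg(\V) \xrightarrow{U} \CH \xrightarrow{G} \cat{C}$ with $G$ comonadic.

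First I would instantiate \cref{t:composition-is-monadic-Vietoris} with $\cat{C} \df \Set^\op$ and $G$ as above. The conclusion is that the composite
\[
\CoAlg(\V) \xrightarrow{U} \CH \xrightarrow{G} \Set^\op
\]
is comonadic. Dualizing (comonadic functors become monadic under passage to opposites), the functor
\[
(G \circ U)^\op \colon \CoAlg(\V)^\op \longrightarrow \Set
\]
is monadic, which is exactly the assertion of \cref{t:monadicity-vietoris}.

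There is no serious obstacle left at this point: the heavy lifting has already been carried out in \cref{c:monadic} (that $\V$ is a covarietor, via \cref{p:preserves-cod-limits} together with the Adámek chain argument) and in \cref{p:Vietoris-refl-coeq} (that $\V$ preserves coreflexive equalizers), both of which are the hypotheses feeding \cref{t:composition-is-monadic-Vietoris}. The only thing to check in the write-up is the small bookkeeping point that Duskin duality is really a comonadic functor $\CH \to \Set^\op$ rather than a monadic one $\CH^\op \to \Set$; but these are literally the same statement, so the argument reduces to a one-line citation of the previously established theorem.
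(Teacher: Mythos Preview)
Your proof is correct and follows exactly the paper's approach: invoke Duskin's monadicity of $\hom_{\CH}(-,[0,1]) \colon \CH^\op \to \Set$, view it as a comonadic functor $\CH \to \Set^\op$, apply \cref{t:composition-is-monadic-Vietoris}, and dualize. The paper's own proof is slightly terser about the dualization bookkeeping, but the argument is identical.
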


\begin{proof}
    By \cite[5.15.3]{Duskin1969}, the representable functor 
    \[
    \hom_\CH(-,[0,1]) \colon \CH^\op \to \Set
    \]
    is monadic.
    Then, by \cref{t:composition-is-monadic-Vietoris}, the composite functor below is monadic. 
    \[
    \CoAlg(\V )^\op \xrightarrow{U^\op} \CH^\op \xrightarrow{G} \Set \qedhere
    \]
\end{proof}

\begin{remark} \label{rem:cocomBarr}
    From the monadicity result in \cref{t:monadicity-vietoris} one can deduce various properties of $\CoAlg(\V)^\op$, such as its (co)completeness and Barr-exactness.
\end{remark}

\begin{remark}
    From the proof of \cref{t:monadicity-vietoris} we can extract the description of a monadic functor $\CoAlg(\V )^\op \to \Set$, as follows.
    To an object $f \colon X \to \V (X)$ of $\CoAlg(\V )$ we associate the set $\hom_\CH(X, [0,1])$.
    To a morphism $g \colon X_1 \to X_2$ in $\CoAlg(\V )$ from $f_1 \colon X_1 \to \V (X_1)$ to $f_2 \colon X_2 \to \V (X_2)$ we associate the function
    \begin{align*}
        \hom_\CH(X_2, [0,1]) & \longrightarrow \hom_\CH(X_1, [0,1])\\
        h & \longmapsto h \circ g.
    \end{align*}
    The unit interval can be replaced by any regular injective regular cogenerator object of $\CH$.
\end{remark}

\section{Convex Vietoris on compact ordered spaces}

In the previous section we proved that the opposite of the category of coalgebras for the Vietoris functor $\V$ on compact Hausdorff spaces is monadic over $\Set$.
In this and the next section we establish analogous results where we replace the Vietoris construction with some of its variants.
In this section, we consider the convex Vietoris hyperspace (which corresponds to the Plotkin powerdomain in domain theory).
This construction is defined on Nachbin's compact ordered spaces, which are an ordered version of compact Hausdorff spaces, and restricts to the classical Vietoris construction on those compact ordered spaces with a trivial order.
In this section, we prove that the opposite of the category of coalgebras for the convex Vietoris functor on compact ordered spaces is monadic over $\Set$.

This last result can be understood as an analogue of the J\'onsson-Tarski duality for positive modal algebras.
Positive modal logic is, roughly speaking, modal logic without negation.
It was introduced by Dunn \cite{Dunn1995}, and it is the restriction of the modal local consequence relation defined by the class of all Kripke models to the propositional modal language whose connectives are $\wedge$, $\lor$, $\top$, $\bot$, $\Box$, $\Diamond$.
The algebras of positive modal logic are called positive modal algebras \cite{Jansana2002}, and are bounded distributive lattices with $\Box$ and $\Diamond$ and some equational axioms.

To represent positive modal algebras one builds on Priestley duality, which states that the category of bounded distributive lattices is dually equivalent to the category of Priestley spaces, i.e.\ Boolean spaces with a partial order and appropriate axioms \cite{Priestley1970}.
Building on Priestley duality, the category of positive modal algebras is dually equivalent to the category of coalgebras for the convex Vietoris functor on Priestley spaces \cite{Palmigiano2004,BezhanishviliKurz2007,BonsangueKurzEtAl2007,VenemaVosmaer2014}.
We extend this equivalence from Priestley spaces to compact ordered spaces maintaining the algebraicity of the algebraic side.

\begin{definition}[{\cite{Nachbin1948}, \cite[p.~44]{Nachbin1965}}]
    A \emph{compact ordered space} is a compact Hausdorff space $X$ equipped with a partial order that is closed in the product topology of $X \times X$. We let $\CompOrd$ denote the category of compact ordered spaces and continuous order-preserving maps.
\end{definition}

The study of compact ordered spaces originated in Nachbin's classic book \cite{Nachbin1965}; see also \cite[Section~VI-6]{GierzHofmannEtAl2003} and \cite{Tholen2009}.
Other names for compact ordered spaces are ``ordered compact spaces'', ``partially ordered compact spaces'', ``separated ordered compact spaces'', ``compact pospaces'', and ``Nachbin spaces''.

In the next section we will recall their close connection with stably compact spaces, their topological alter ego.
In this section, we turn our attention to the convex Vietoris functor on compact ordered spaces.
One appealing property of this functor is that it restricts to the usual Vietoris functor on $\CH$.
We prove that also the opposite of the category of coalgebras for this functor is monadic over $\Set$.

\begin{remark} \label{r:dual}
    Dualising the order of a compact ordered space defines a compact ordered space, as well.
\end{remark}

\begin{notation}
    An upward (resp.\ downward) closed subset of a poset will also be called an \emph{upset} (resp.\ \emph{downset}).
    A subset $Y$ of a poset is said to be \emph{convex} if $Y \ni x \leq y \leq z \in Y$ implies $y \in Y$.
    We let $\u Y$ and $\d Y$ denote respectively the up-closure and down-closure of a subset $Y$ of a poset $X$.
    We use $\u x$ and $\d x$ as shorthands for $\u \{x\}$ and $\d \{x\}$.
    We denote the smallest convex set containing a set $Y$ by
    \[
    \ud Y \coloneqq \{x \in X \mid \exists y,y' \in Y \text{ s.t.\ }y \leq x \leq y'\} = \u Y \cap \d Y.
    \]
\end{notation}

In the following two lemmas we recall some basic properties of compact ordered spaces.

\begin{lemma}[{\cite[Proposition 4, p.~44]{Nachbin1965}}] \label{l:up-set-closed}
    Let $X$ be a compact ordered space.
    For every closed subset $K$ of $X$, the sets $\d K$ and $\u K$ are closed.
\end{lemma}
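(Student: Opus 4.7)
The plan is to exploit the defining property of a compact ordered space, namely that the order relation $R \df {\leq} \subseteq X \times X$ is closed. The downset $\d K$ and upset $\u K$ can both be realised as images of certain closed subsets of $X \times X$ under the coordinate projections, and since $X \times X$ is compact Hausdorff, continuous images of closed (hence compact) subsets will be closed.

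More concretely, I would first observe that
\[
\d K = \pi_1\bigl(R \cap (X \times K)\bigr),
\]
where $\pi_1 \colon X \times X \to X$ is the first projection: an element $x$ lies in the right-hand side exactly when there is some $y \in K$ with $x \leq y$, which is the definition of $\d K$. Since $K$ is closed in $X$, the set $X \times K$ is closed in $X \times X$, and $R$ is closed by hypothesis, so their intersection is a closed subset of the compact space $X \times X$, hence compact. Applying the continuous map $\pi_1$ yields a compact subset of the Hausdorff space $X$, which is therefore closed. The argument for $\u K$ is entirely symmetric, using the second projection $\pi_2$ and the closed set $K \times X$ in place of $X \times K$.

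There is no real obstacle here: the only ingredients are the closedness of $R$ (given), the fact that $X \times X$ is compact Hausdorff (because $X$ is), and the standard observation that continuous images of compact sets in Hausdorff spaces are closed. The proof is essentially a one-liner once one writes $\d K$ and $\u K$ in the projection form above.
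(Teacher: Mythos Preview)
Your argument is correct: writing $\d K = \pi_1\bigl(R \cap (X \times K)\bigr)$ and $\u K = \pi_2\bigl(R \cap (K \times X)\bigr)$ and then pushing the closed (hence compact) intersection forward under a continuous projection into a Hausdorff space is exactly the standard proof. Note, however, that the paper does not supply its own proof of this lemma; it simply cites \cite[Proposition~4, p.~44]{Nachbin1965}. Your proof is essentially the one found there, so there is nothing to compare.
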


\begin{lemma}[{\cite[Theorem~4, p.\ 48]{Nachbin1965}}] \label{l:normal}
    Let $X$ be a compact ordered space, and let $K$ be a closed downset of $X$ and $L$ a closed upset of $X$ such that $K \cap L = \varnothing$.
    There are an open downset $U$ of $X$ and an open upset $V$ of $X$ such that $K \subseteq U$, $L \subseteq V$ and $U \cap V = \varnothing$.
\end{lemma}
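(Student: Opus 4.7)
The plan is to leverage the classical normality of compact Hausdorff spaces together with the preceding \cref{l:up-set-closed} (closure of $\d(-)$ and $\u(-)$ applied to closed sets). The key observation is that the naive candidates $X \setminus L$ and $X \setminus K$ are already open and of the correct monotonicity type and each contains the correct set, but in general they fail to be disjoint. The trick is to shrink them using a preliminary topological separation of $K$ and $L$ and then to take order-saturations of the resulting closed complements.

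First, since $X$ is compact Hausdorff it is normal. As $K$ and $L$ are disjoint closed subsets, there exist disjoint open sets $A, B \subseteq X$ with $K \subseteq A$ and $L \subseteq B$. Next, I define
\[
U \df X \setminus \u(X \setminus A), \qquad V \df X \setminus \d(X \setminus B).
\]
Because $X \setminus A$ and $X \setminus B$ are closed, \cref{l:up-set-closed} guarantees that $\u(X \setminus A)$ and $\d(X \setminus B)$ are closed as well, so $U$ and $V$ are open. Moreover, the complement of an upset is a downset (and vice versa), so $U$ is a downset and $V$ is an upset.

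It remains to verify the three inclusions. For $K \subseteq U$: pick $k \in K$ and suppose for contradiction that $k \in \u(X \setminus A)$, so $k \geq m$ for some $m \notin A$; since $K$ is a downset and $k \in K$, we get $m \in K \subseteq A$, a contradiction. The inclusion $L \subseteq V$ is symmetric, using that $L$ is an upset and $L \subseteq B$. Finally, for disjointness, suppose $x \in U \cap V$; then taking $m = x$ in the definitions we see that $x \notin A$ would force $x \in \u(X \setminus A)$ (since $x \leq x$), and similarly $x \notin B$ would force $x \in \d(X \setminus B)$. Hence $x \in A \cap B$, contradicting $A \cap B = \varnothing$.

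The proof is essentially routine once one has the right candidates for $U$ and $V$; the only nontrivial input is \cref{l:up-set-closed}, and the potentially confusing step is the definition of $U$ and $V$ as complements of \emph{order-saturations of complements}, which is what allows us to simultaneously guarantee openness (via \cref{l:up-set-closed}), the right monotonicity type (via complementation), the inclusions $K \subseteq U$ and $L \subseteq V$ (using that $K$ is a downset and $L$ is an upset), and disjointness (via $A \cap B = \varnothing$). No transfinite construction or Zorn-style argument is needed.
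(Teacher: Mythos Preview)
Your proof is correct. The paper does not supply its own proof of this lemma; it simply cites Nachbin \cite[Theorem~4, p.~48]{Nachbin1965}. Your argument---separating $K$ and $L$ by disjoint opens $A$, $B$ using ordinary normality, and then passing to $U = X \setminus \u(X \setminus A)$ and $V = X \setminus \d(X \setminus B)$---is a clean and standard way to obtain the monotone separation, relying only on \cref{l:up-set-closed}. All four verifications (openness, monotonicity type, the inclusions, and disjointness) are carried out correctly.
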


In the following, we will define the convex Vietoris functor on compact ordered spaces.
In the case of a Priestley space $X$, its convex Vietoris hyperspace was described in \cite[Sections~3.3 and 3.4]{Palmigiano2004} as a quotient of the classical Vietoris hyperspace $\V X$.
In \cite{BezhanishviliKurz2007,BonsangueKurzEtAl2007,VenemaVosmaer2014}, an alternative equivalent description was given. In this description, a Priestley space $X$ is mapped to a Priestley space whose underlying set is the set of compact convex subsets of $X$.
The equivalence of these two definitions is proved in \cite[Theorem~4.8]{BezhanishviliHardingEtAl2023}.
We refer to \cite[Section~3.1]{Lauridsen2015} for a detailed proof of the fact that this second construction is indeed a well-defined functor on the category of Priestley spaces.

In the following, we define the convex Vietoris hyperspace construction and show that this defines an endofunctor on the category of compact ordered spaces.\footnote{This construction corresponds to a well-known construction in the equivalent context of stably compact spaces, based on the notion of so-called \emph{lenses}; see e.g.\ \cite[Section~6]{Lawson2011} (where however the empty set is excluded from the hyperspace). To the best of our knowledge, the first account of this construction is in \cite{Johnstone1985} (see in particular Corollary 3.10 therein for the spatial setting).}

\begin{definition}
    We let $\Vc X$ denote the set of closed convex subsets of a compact ordered space $X$.
    We equip $\Vc X$ with the topology generated by the sets
    \begin{align*}
    	\Diamond U & \df \{K \in \Vc X \mid K \cap U \neq \varnothing\} &&\text{($U$ open upset or open downset of $X$),}\\
    	\Box U & \df \{K \in \Vc X \mid K \subseteq U \} && \text{($U$ open upset or open downset of $X$).} 
    \end{align*}
    We equip $\Vc X$ with the Egli-Milner order, i.e.\ for $K, L \in \Vc X$, we set 
    \[
    K \leq_{\EM} L \iff \u L \subseteq \u K \text{ and } \d K \subseteq \d L.
    \]
    Explicitly, this condition amounts to
    \[
        \forall y \in L\ \exists x \in K \text{ s.t.\ } x \leq y \quad \text{ and }\quad
	\forall x \in K\ \exists y \in L \text{ s.t.\ } x \leq y.
    \]
    We call $\Vc X$ the \emph{convex Vietoris hyperspace of $X$}.
\end{definition}

\begin{theorem} \label{t:Vc-is-compord}
    For every compact ordered space $X$, the convex Vietoris hyperspace $\Vc X$ of $X$ is a compact ordered space.
\end{theorem}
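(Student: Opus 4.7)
The plan is to verify the three defining properties of a compact ordered space in turn --- compactness of $\Vc X$, Hausdorffness, and closedness of $\leq_{\EM}$ in $\Vc X \times \Vc X$ --- each by unpacking the definitions and feeding into two standard tools: \cref{l:up-set-closed}, which says up- and down-closures of closed sets are closed, and the ordered normality of \cref{l:normal}. For compactness I would exhibit a continuous surjection $c \colon \V X \to \Vc X$ sending $K$ to its convex hull $\u K \cap \d K$. This lands in $\Vc X$ by \cref{l:up-set-closed}, and is surjective because every closed convex set equals its own convex hull. Continuity is checked on the Vietoris subbase: for an open upset $U$, one has $c(K) \subseteq U$ iff $K \subseteq U$ (since $K \subseteq U$ forces $\u K \subseteq U$ by the upset property), and $c(K) \cap U \neq \varnothing$ iff $K \cap U \neq \varnothing$, the nontrivial direction using that a point in $\u K \cap \d K \cap U$ lies below some $k \in K$ which the upset $U$ then absorbs. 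The open-downset clauses are dual. Since $\V X$ is compact Hausdorff, $\Vc X = c(\V X)$ is compact.

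For Hausdorffness, given distinct $K, L \in \Vc X$, pick $x$ in the symmetric difference and, up to swapping $K$ and $L$, assume $x \in K \setminus L$. Since $L = \u L \cap \d L$ (as $L$ is closed convex), either $x \notin \u L$ or $x \notin \d L$; by \cref{r:dual} it suffices to treat the first. Then $\d x$ is a closed downset, $\u L$ is a closed upset, and they are disjoint --- any $z \in \d x \cap \u L$ would produce $l \in L$ with $l \leq z \leq x$, forcing $x \in \u L$. \cref{l:normal} then yields disjoint open sets $U \supseteq \d x$ (downset) and $V \supseteq \u L$ (upset), and $\Diamond U \ni K$, $\Box V \ni L$ are disjoint Vietoris subbasic opens separating $K$ and $L$.

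Closedness of $\leq_{\EM}$ runs on the same template one dimension up. If $K \not\leq_{\EM} L$ then either $\u L \not\subseteq \u K$ or $\d K \not\subseteq \d L$; by \cref{r:dual} assume the former. Unpacking, there is $y \in L$ with $\d y \cap K = \varnothing$, so $\d y$ and $\u K$ are disjoint closed down- and up-sets; \cref{l:normal} gives an open downset $U \supseteq \d y$ and an open upset $V \supseteq \u K$ with $U \cap V = \varnothing$, and the rectangle $\Box V \times \Diamond U$ is then an open neighborhood of $(K, L)$ disjoint from $\leq_{\EM}$, since any $(K', L')$ inside with $K' \leq_{\EM} L'$ would produce $x' \leq y'$ with $x' \in K' \subseteq V$ and $y' \in L' \cap U$, and then the upset property of $V$ would place $y' \in V$, contradicting $U \cap V = \varnothing$. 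Reflexivity and transitivity of $\leq_{\EM}$ are routine, and antisymmetry is immediate from $K = \u K \cap \d K$ for closed convex $K$. The one recurring obstacle is bookkeeping: one has to keep careful track of which side of a separation is a downset and which is an upset, and which Vietoris subbasic clause ($\Box$ or $\Diamond$) it then feeds into. Once this layer of definitional chase is handled, all three verifications collapse into essentially one argument driven by \cref{l:normal}.
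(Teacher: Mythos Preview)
Your proof is correct. The Hausdorffness argument and the closedness-of-$\leq_{\EM}$ argument are essentially the paper's own arguments, with only cosmetic differences in which ``without loss of generality'' case is chosen and in how the disjointness of the two closed sets fed into \cref{l:normal} is justified. Your extra remark that $\leq_{\EM}$ is reflexive, transitive, and antisymmetric is a small completeness bonus the paper leaves implicit.

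The compactness argument, however, is genuinely different. The paper proves compactness directly via the Alexander subbase theorem: given a subbasic cover of $\Vc X$ by boxes and diamonds of open upsets and downsets, it forms the closed convex set $F = X \setminus \bigl(\bigcup_j U'_j \cup \bigcup_l V'_l\bigr)$ obtained by removing all diamond-arguments, observes $F \in \Vc X$ must lie in some $\Box U_{i_0}$, and extracts a finite subcover from $X = U_{i_0} \cup \bigcup_j U'_j \cup \bigcup_l V'_l$ by compactness of $X$. Your route instead pushes the work onto the classical Vietoris hyperspace: the convex-hull map $c \colon \V X \to \Vc X$, $K \mapsto \u K \cap \d K$, is a continuous surjection (your subbase computation $c^{-1}(\Box U) = \Box U$ and $c^{-1}(\Diamond U) = \Diamond U$ for $U$ an open up- or downset is exactly right), so compactness of $\Vc X$ is inherited from that of $\V X$. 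Your approach is shorter and conceptually cleaner once one is willing to import the compactness of $\V X$; the paper's approach is more self-contained and makes the combinatorics of the convex Vietoris subbase explicit, which is arguably in keeping with a paper introducing $\Vc$ as a new construction on $\CompOrd$.
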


\begin{proof}
    To prove that $\Vc X$ is Hausdorff, let $K, L \in \Vc$ be distinct.
    Without loss of generality, we may suppose that there is $x \in K \setminus L$.
    Moreover, by convexity of $L$, either $L$ is disjoint from $\d x$ or from $\u x$.
    Without loss of generality, we may suppose that $L$ is disjoint from $\u x$ (the other case being similar).
    By \cref{l:up-set-closed,l:normal}, there are an open upset $U$ of $X$ and an open downset $V$ of $X$ such that $\d x \subseteq U$, $L \subseteq V$, and $U \cap V = \varnothing$.
    Then $K \in \Diamond U$ because $x \in K \cap U$, $L \in \Box V$ because $L \subseteq V$, and $\Diamond U$ and $\Box V$ are disjoint because $U$ and $V$ are disjoint.
    This proves that $\Vc X$ is Hausdorff.

    We prove compactness.
    By the Alexander subbase theorem, it is enough to prove that every cover of $\Vc X$ by subbasic open sets (i.e.\ by boxes and diamonds of open upsets and open downsets) has a finite subcover.
    Suppose
    \begin{equation} \label{eq:cover}
        \Vc X = \bigcup_{i \in I} \Box U_i \cup \bigcup_{j \in J} \Diamond U'_j \cup \bigcup_{k \in K} \Box V_k \cup \bigcup_{l \in L} \Diamond V'_l,
    \end{equation}
    where $U_i$ and $U_j$ are open upsets for all $i \in I$ and $j \in J$, and $V_k$ and $V_l$ are open downsets for all $k \in K$ and $l \in L$.
    Set $W \coloneqq \bigcup_{j \in J} U'_j \cup \bigcup_{l \in L}V'_l$, and $F \coloneqq X \setminus W$.
    Since $W$ is a union of open sets, it is open, and its complement $F$ is closed.
    Since $W$ is a union of an upset and a downset, its complement $F$ is an intersection of a downset and an upset; it follows that $F$ is convex.
    Therefore, $F \in \Vc X$.
    
    By \eqref{eq:cover}, there is $i \in I$ such that $F \in \Box U_i$, there is $j \in J$ such that $F \in \Diamond U'_j$, there is $k \in K$ such that $F \in \Box V_k$ or there is $l \in L$ such that $F \in \Diamond V'_l$.
    Without loss of generality, we can assume that either there is $i \in I$ such that $F \in \Box U_i$ or there is $j \in J$ such that $F \in \Diamond U'_j$ (the other cases being similar).
    We can exclude the case that there is $j \in J$ such that $F \in \Diamond U'_j$ because, since $F = X \setminus (\bigcup_{j \in J} U'_j \cup \bigcup_{l \in L}V'_l)$, $F$ is disjoint from $U'_j$ for every $j \in J$.
    Therefore, there is $i_0 \in I$ such that $F \in \Box U_{i_0}$, i.e.\ $F \subseteq U_{i_0}$.

    We have
    \[
    X = F \cup W \subseteq U_{i_0} \cup W = U_{i_0} \cup \bigcup_{j \in J} U'_j \cup \bigcup_{l \in L}V'_l.
    \]
    and therefore
    \[
    X = U_{i_0} \cup \bigcup_{j \in J} U'_j \cup \bigcup_{l \in L}V'_l.
    \]
    Since $X$ is compact, there are a finite $J' \subseteq J$ and a finite $L' \subseteq L$ such that
    \[
    X = U_{i_0} \cup \bigcup_{j \in J'} U'_j \cup \bigcup_{l \in L'}V'_l.
    \]
    It follows that
    \[
    \Vc X = \Box U_{i_0} \cup \bigcup_{j \in J'} \Diamond U'_j \cup \bigcup_{l \in L'} \Diamond V'_l.
    \]
    This proves compactness.

    We prove that the Egli-Milner order is a closed subset of $(\Vc X) \times (\Vc X)$.
    To do so, we prove that its complement is open.
    Let $(K, L) \in ((\Vc X) \times (\Vc X)) \setminus ({\leq_\EM})$.
    Either $\u L \nsubseteq \u K$ or $\d K \nsubseteq \d L$.
    Without loss of generality, we can suppose $\u L \nsubseteq \u K$, the other case being similar.
    Therefore, there is $x \in L \setminus \u K$.
    From $x \notin \u K$ we deduce $\d x \cap \u K = \varnothing$.
    By \cref{l:up-set-closed,l:normal}, there are an open downset $U$ of $X$ and an open upset $V$ of $X$ such that $\d x \subseteq U$, $K \subseteq V$ and $U \cap V = \varnothing$.
    The set $\Diamond U$ is an open neighbourhood of $L$ in $\Vc X$ (because $x \in U \cap L$), $\Box V$ is an open neighbourhood of $K$ in $\Vc$ (because $K \subseteq U$).
    Moreover, for every $L' \in \Diamond U$ and every $K' \in \Box V$, we have $L' \nsubseteq \u K$, which implies $K' \nleq_\EM L'$.
    Therefore, $(\Box V) \times (\Diamond U)$ is an open neighbourhood of $(K, L)$ disjoint from the Egli-Milner order of $\Vc X$.
\end{proof}

\begin{definition}
    We let $\Vc \colon \CompOrd \to \CompOrd$ denote the functor that maps
    \begin{itemize}
        \item a compact ordered space $X$ to its convex Vietoris hyperspace $\Vc(X)$.
        \item a morphism $f \colon X \to Y$ to the morphism $\Vc f \colon \Vc X \to \Vc Y$ that maps an element $K \in \Vc X$ to the convex closure $\ud f[K]$ of $f[K]$.
    \end{itemize}
    We call $\Vc$ the \emph{convex Vietoris functor}.
\end{definition}

\begin{lemma}
	$\Vc$ is a well-defined functor. 
\end{lemma}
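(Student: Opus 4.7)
The plan is to verify, in order: (i) that $\Vc f(K) = \ud f[K]$ really lies in $\Vc Y$, (ii) continuity, (iii) monotonicity with respect to the Egli-Milner order, and (iv) preservation of identities and composition.

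For (i), given a closed convex $K \subseteq X$, observe that $f[K]$ is compact hence closed, and $\ud f[K] = \u f[K] \cap \d f[K]$; both factors are closed by \cref{l:up-set-closed}, so their intersection is closed. Convexity of $\ud f[K]$ is immediate from its description as the smallest convex set containing $f[K]$.

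For (ii), the key observation (which propagates through the whole argument) is that when $U$ is either an open upset or an open downset of $Y$, the conditions $\ud f[K] \subseteq U$ and $f[K] \subseteq U$ are equivalent, and similarly $\ud f[K] \cap U \neq \varnothing$ is equivalent to $f[K] \cap U \neq \varnothing$. Indeed, $f[K] \subseteq \ud f[K] \subseteq \u f[K]$, and $U$ being an upset forces $\u f[K] \subseteq U$ as soon as $f[K] \subseteq U$ (and the downset case is symmetric). Using also that $f^{-1}[U]$ is an open upset (resp.\ open downset) of $X$ when $U$ is, we obtain
\[
(\Vc f)^{-1}(\Box U) = \Box f^{-1}[U], \qquad (\Vc f)^{-1}(\Diamond U) = \Diamond f^{-1}[U],
\]
so preimages of subbasic opens are subbasic open, proving continuity.

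For (iii), suppose $K \leq_\EM L$, i.e.\ $\u L \subseteq \u K$ and $\d K \subseteq \d L$. Applying $f$ (monotone) pointwise yields $\u f[L] \subseteq \u f[K]$ and $\d f[K] \subseteq \d f[L]$. Combining this with the easy identities $\u(\ud Z) = \u Z$ and $\d(\ud Z) = \d Z$ (both inclusions being immediate from $Z \subseteq \ud Z \subseteq \u Z \cap \d Z$) gives $\Vc f(K) \leq_\EM \Vc f(L)$. Finally, for (iv), $\Vc(\mathrm{id}_X)(K) = \ud K = K$ since $K$ is convex, and for $g \colon Y \to Z$ we need $\ud g[f[K]] = \ud g[\ud f[K]]$; the nontrivial inclusion follows because any element of $\ud f[K]$ is sandwiched between two points of $f[K]$ and monotonicity of $g$ transports the sandwich into $g[f[K]]$, so any convex combination in $g[\ud f[K]]$ is already a convex combination in $g[f[K]]$.

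No real obstacle is anticipated; the only delicate point is the ``upset/downset trick'' in step (ii), which is what makes the Vietoris hyperspace topology behave well with respect to replacing $f[K]$ by its convex closure $\ud f[K]$. Everything else is bookkeeping.
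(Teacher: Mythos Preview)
Your proof is correct and follows essentially the same approach as the paper: well-definedness of $\ud f[K]$ via \cref{l:up-set-closed}, continuity by computing $(\Vc f)^{-1}[\Box U] = \Box f^{-1}[U]$ and $(\Vc f)^{-1}[\Diamond U] = \Diamond f^{-1}[U]$ using the upset/downset trick, monotonicity via $\u f[\u S] = \u f[S]$, and functoriality via $\ud g[\ud f[K]] = \ud g[f[K]]$. The paper's argument is organized the same way and relies on the same identities; only the phrasing differs.
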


\begin{proof}
	\cref{t:Vc-is-compord} shows that $\Vc$ is well-defined on objects.
	
	We prove that $\Vc$ is well-defined on morphisms.
	Let $f \colon X \to Y$ be a morphism of compact ordered spaces.
	For every $K \in \Vc X$ the set $f[K]$ is compact and thus its convex closure $\ud f[K]$ is also compact (by \cref{l:up-set-closed}, since $\ud f[K] = \u K \cap \d K$).
	Therefore, $\Vc f \colon \Vc X \to \Vc Y$ is a well-defined function.
	
	We prove that $\Vc f$ is continuous. Let $U$ be an open upset of $\Vc Y$.
	We have
	\begin{align*}
		(\Vc f)^{-1}[\Diamond U] & = \{K \in \Vc X \mid (\Vc f)(K) \in \Diamond U\}\\
		& =  \{K \in \Vc X \mid \ud f[K] \in \Diamond U\}\\
		& = \{K \in \Vc X \mid \ud f[K] \cap U \neq \varnothing \}\\
		& = \{K \in \Vc X \mid f[K] \cap U \neq \varnothing \}\\
		& = \{K \in \Vc X \mid K \cap f^{-1} [U] \neq \varnothing \}\\
		& = \Diamond [f^{-1} [U]],
	\end{align*}
	which is an open subset of $\Vc X$.
	Moreover,
	\begin{align*}
		(\Vc f)^{-1}[\Box U] & = \{K \in \Vc X \mid (\Vc f)(K) \in \Box U\}\\
		& =  \{K \in \Vc X \mid \ud f[K] \in \Box U\}\\
		& = \{K \in \Vc X \mid \ud f[K] \subseteq U \}\\
		& = \{K \in \Vc X \mid f[K] \subseteq U \}\\
		& = \{K \in \Vc X \mid K \subseteq f^{-1} [U] \}\\
		& = \Box [f^{-1} [U]],
	\end{align*}
	which is an open subset of $\Vc X$.
	Analogous facts are analogously proved for $U$ an open downset.
	Therefore, $\Vc f$ is continuous.
		
	We prove that $\Vc f$ is order-preserving. Let $K, L \in \Vc X$ be such that $K \leq_\EM L$, i.e.\ $\u L \subseteq \u K$ and $\d K \subseteq \d L$.
	Then,
	\[
		\u \ud f[L] = \u f[L] = \u f[\u L] \subseteq \u f[\u K] = \u f[K] = \u \ud f[K].
	\]
	This proves $\u ((\Vc f) (L)) \subseteq \u ((\Vc f) (K))$.
	Similarly, $\d ((\Vc f) (K)) \subseteq \d ((\Vc f) (L))$.
	Thus, $(\Vc f) (K) \leq_\EM (\Vc f) (L)$. Therefore, $\Vc f$ is order-preserving.
	
	This proves that $\Vc$ is well-defined on morphisms.
	
	We prove that $\Vc$ preserves composition. Let $f \colon X \to Y$ and $g \colon Y \to Z$ be two morphisms.
	Then, for every $K \in \Vc X$, we have 
	\begin{align*}
		(\Vc (g \circ f))(K) & = \ud (g \circ f)[K] \\
		& = \ud g[f[K]] \\
		& = \ud g [\ud f [K]]\\
		& = \Vc (g) (\Vc (f)(K))\\
		& = (\Vc (g) \circ \Vc (f))(K).
	\end{align*}
	This proves that $\Vc$ preserves composition.
	
	The proof that $\Vc$ preserves identities is straightforward.
\end{proof}

We will prove that $\CoAlg(\Vc)^\op$ is monadic over $\Set$; to this purpose, in view of an application of \cref{t:monadicity-vietoris}, we show that $\Vc$ preserves codirected limits and coreflexive equalizers.

To prove that $\Vc$ preserves codirected limits, we take inspiration from \cite[Section~3]{HofmannNevesEtAl2019}, which proves the same property for the lower Vietoris functor.

\begin{remark} \label{r:limits}
    For details about limits in $\CompOrd$ we refer to \cite[Section~3]{HofmannNora2015}.
    We will use the fact that a cone in $\CompOrd$ is a limit cone if and only if it is a limit cone in $\Set$ (or, equivalently, in $\CH$) and the order is initial.
\end{remark}

By \cite[Remark~4.3]{HofmannNora2023}, $\CompOrd$ inherits a nice characterisation of codirected limits from the category $\CH$.
A first hint of the characterisation of codirected limits in $\CH$ is in Bourbaki \cite{Bourbaki1942}.
The characterisation was proved in \cite{Hofmann1999} (see also \cite[Theorem~3.29]{HofmannNevesEtAl2019}), and it is called the \emph{Bourbaki-criterion}.
Here, we formulate it in the setting of compact ordered spaces.

\begin{theorem}[Bourbaki-criterion for compact ordered spaces, {\cite[Remark~4.3]{HofmannNora2023}}] \label{t:Bourbaki-compord}
    Let $D \colon \cat{I} \to \CompOrd$ be a codirected diagram.
    A cone $(f_i \colon X \to D(i))_{i \in \cat{I}}$ for $D$ is a limit cone if and only if the following conditions hold.
    \begin{enumerate}
        \item \label{i:separation} 
        For all $x, y \in X$, $x \leq y$ if and only if for all $i \in \cat{I}$ we have $f_i(x) \leq f_i(y)$.
        
        \item \label{i:images} 
        For all $i \in \cat{I}$, the image of $f_i$ coincides with the intersection of the images of all $D(j \to i)$; in symbols,
        \[
            \bigcap_{j \to i}\ima D(j \to i) = \ima {f_i}.
        \]
    \end{enumerate}
\end{theorem}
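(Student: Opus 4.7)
My plan is to obtain this as a direct consequence of the classical Bourbaki criterion for $\CH$ combined with the description of limits in $\CompOrd$ recalled in Remark \ref{r:limits}: a cone in $\CompOrd$ is a limit cone if and only if its underlying cone in $\CH$ is a limit cone and the partial order on the apex is initial with respect to the cone projections, meaning $x \leq y$ in $X$ iff $f_i(x) \leq f_i(y)$ for all $i \in \cat{I}$.

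Condition~(\ref{i:separation}) of the statement is literally the initiality of the order: the forward implication is automatic because each $f_i$ is order-preserving, and the backward implication is the substantive content. Observe moreover that initiality already forces joint injectivity of the family $(f_i)_{i \in \cat{I}}$: if $f_i(x) = f_i(y)$ for all $i$, then $f_i(x) \leq f_i(y)$ and $f_i(y) \leq f_i(x)$ for all $i$, so $x \leq y$ and $y \leq x$, and antisymmetry yields $x = y$. Hence condition~(\ref{i:separation}) delivers the separation hypothesis needed by the $\CH$-version of the Bourbaki criterion. Condition~(\ref{i:images}) is, in turn, exactly the second clause of the $\CH$ Bourbaki criterion (\cite[Theorem 3.29]{HofmannNevesEtAl2019}).

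With this dictionary in place, both directions are immediate. For the forward direction, assume $(f_i)_{i\in\cat I}$ is a limit cone in $\CompOrd$; then by Remark~\ref{r:limits} the order is initial, which gives~(\ref{i:separation}), and the underlying cone is a limit in $\CH$, which via the $\CH$ Bourbaki criterion gives~(\ref{i:images}). For the backward direction, suppose~(\ref{i:separation}) and~(\ref{i:images}) hold; the separation consequence of~(\ref{i:separation}) together with~(\ref{i:images}) makes the underlying cone a limit in $\CH$ by the $\CH$ criterion, and~(\ref{i:separation}) itself is initiality of the order, so Remark~\ref{r:limits} promotes it to a limit cone in $\CompOrd$.

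The only mildly delicate point is the appeal to Remark~\ref{r:limits}, which relies on knowing that the forgetful functor $\CompOrd \to \CH$ creates limits, i.e.\ that equipping a compact Hausdorff limit with the pointwise-induced order produces again a compact ordered space; this uses the standard fact that a pointwise infimum of closed partial orders on a compact Hausdorff space is closed. This is where all the topological content is hidden, but it is entirely external to the present argument: once it is taken on board from \cite{HofmannNora2023}, the theorem reduces to unpacking the two conditions as above.
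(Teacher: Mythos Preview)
Your proposal is correct. Note, however, that the paper does not actually prove this statement: it is stated with a citation to \cite[Remark~4.3]{HofmannNora2023} and no proof is given. Your argument---reducing to the Bourbaki criterion for $\CH$ via the description of limits in $\CompOrd$ from Remark~\ref{r:limits}, and observing that order-initiality (condition~\eqref{i:separation}) already implies the point-separation needed in $\CH$---is the natural route and is presumably what the cited reference has in mind.
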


\begin{lemma}\label{i:initial-mono}
    Let $D \colon \cat{I} \to \CompOrd$ be a codirected diagram, let $(f_i \colon X \to D(i))_{i \in \cat{I}}$ be a limit for $D$, and let $K$ and $L$ be closed subsets of $X$.
    \begin{enumerate}
        \item \label{i:upper-inclusion}
        $\u K \subseteq \u L$ if and only if for all $i \in \cat{I}$ we have $\u f_i[K] \subseteq \u f_i[L]$.
        
        \item \label{i:lower-inclusion}
        $\d K \subseteq \d L$ if and only if for all $i \in \cat{I}$ we have $\d f_i[K] \subseteq \d f_i[L]$.	

        \item \label{i:EM-inclusion}
        $\ud K \leq_{\EM} \ud L$ if and only if for all $i \in \cat{I}$ we have $\ud f_i[K] \leq_{\EM} \ud f_i[L]$.
    \end{enumerate}
\end{lemma}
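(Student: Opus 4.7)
My plan is to dispatch the three forward directions first, since they all follow directly from order-preservation of the maps $f_i$, and then concentrate on the backward direction of (1), which is the only nontrivial step. Parts (2) and (3) will then be obtained by symmetry and by combining (1) and (2).

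For the forward direction of (1), if $\u K \subseteq \u L$ then for each $k \in K$ some $l \in L$ satisfies $l \leq k$; order-preservation of $f_i$ gives $f_i(l) \leq f_i(k)$, so $f_i[K] \subseteq \u f_i[L]$, whence $\u f_i[K] \subseteq \u f_i[L]$. The forward direction of (2) is symmetric, and the forward direction of (3) follows from (1) and (2) via the identities $\u \ud Y = \u Y$ and $\d \ud Y = \d Y$, which unpack $\ud K \leq_\EM \ud L$ as the conjunction $\u L \subseteq \u K$ and $\d K \subseteq \d L$.

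The heart of the proof is the backward direction of (1). Assume $\u f_i[K] \subseteq \u f_i[L]$ for every $i \in \cat{I}$, and let $x \in \u K$; the goal is to produce $l \in L$ with $l \leq x$. For each $i$ I introduce
\[
L_i \coloneqq \{\, l \in L \mid f_i(l) \leq f_i(x)\,\},
\]
which is closed in $L$ because the order on $D(i)$ is closed and $f_i$ is continuous. Each $L_i$ is nonempty: picking $k \in K$ with $k \leq x$ yields $f_i(x) \in \u f_i[K] \subseteq \u f_i[L]$, so some $l \in L$ satisfies $f_i(l) \leq f_i(x)$. The compatibility $f_i = D(j \to i) \circ f_j$ together with order-preservation of $D(j \to i)$ gives $L_j \subseteq L_i$ whenever $j \to i$ in $\cat{I}$, so codirectedness of $\cat{I}$ upgrades $(L_i)_{i \in \cat{I}}$ to a filtered family, hence it enjoys the finite intersection property. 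Since $L$ is a closed subspace of the compact space $X$, the intersection $\bigcap_{i \in \cat{I}} L_i$ is nonempty; any $l$ in it satisfies $f_i(l) \leq f_i(x)$ for all $i$, and condition \eqref{i:separation} of the Bourbaki criterion (\cref{t:Bourbaki-compord}) forces $l \leq x$, showing $x \in \u L$.

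Part (2) then follows by a symmetric argument (or, more efficiently, by applying (1) to the order-dualised cone, see \cref{r:dual}). For (3), unpacking both sides via $\u \ud Y = \u Y$ and $\d \ud Y = \d Y$ reduces $\ud K \leq_\EM \ud L$ to the conjunction $\u L \subseteq \u K \text{ and } \d K \subseteq \d L$, and likewise for the $f_i$-images; combining (1) (with $K$ and $L$ swapped) and (2) finishes the equivalence. The only real obstacle is the filtered-intersection/compactness step in the backward direction of (1); everything else is essentially formal manipulation of up-, down- and convex closures together with order-preservation of the $f_i$.
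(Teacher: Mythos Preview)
Your proof is correct and follows essentially the same route as the paper. Your sets $L_i$ are precisely the paper's $L \cap f_i^{-1}[\d f_i(x)]$, and your use of the Bourbaki criterion to conclude $l \leq x$ matches the paper's identification $\bigcap_{i} f_i^{-1}[\d f_i(x)] = \d x$; the only difference is that you spell out the codirectedness argument for the finite intersection property, which the paper leaves implicit in its ``by compactness'' step.
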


\begin{proof}
    \eqref{i:upper-inclusion}. 
    The left-to-right inclusion holds because $\u K \subseteq \u L$ implies $\u f_i[K]= \u f_i[\u K] \subseteq \u f_i[\u L]  = \u f_i[L]$.
    
    For the right-to-left inclusion, suppose that for all $i \in \cat{I}$ we have $\u f_i[K] \subseteq \u f_i[L]$.
    Let $x \in \u K$.
    Then for all $i \in \cat{I}$ we have $f_i(x) \in \u f_i[K] \subseteq \u f_i[L]$, and thus $f_i(x) \in \u f_i[L]$, which means $f_i[L] \cap \d f_i(x) \neq \varnothing$, which implies $L \cap f_i^{-1}[\d f_i(x)] \neq \varnothing$.
    By compactness, it follows that $L \cap \bigcap_{i \in \cat{I}}f_i^{-1}[\d f_i(x)] \neq \varnothing$.
    By \cref{t:Bourbaki-compord}\eqref{i:separation}, we have $\bigcap_{i \in \cat{I}}f_i^{-1}[\d f_i(x)] = \d x$, and thus $L \cap \d x \neq \varnothing$, which implies $x \in \u L$.
    Since this holds for all $x \in \u K$, we have $\u K \subseteq \u L$.

    \eqref{i:lower-inclusion}.
    This is dual to \eqref{i:upper-inclusion}, in the sense of \cref{r:dual}.

    \eqref{i:EM-inclusion}. We have
    \begin{align*}
        &\ud K \leq_{\EM} \ud L \\
        & \iff \u L \subseteq \u K \text{ and } \d K \subseteq \d L\\
        & \iff \forall i \in \cat{I}\ \u f_i[L] \subseteq \u f_i[K] \text{ and } \d f_i[K] \subseteq \d f_i[L] && \text{by \eqref{i:lower-inclusion} and \eqref{i:upper-inclusion}}\\ 
        & \iff \forall i \in \cat{I}\ \u \ud f_i[L] \subseteq \u f_i[K] \text{ and } \d \ud f_i[K] \subseteq \d f_i[L]\\
        & \iff \forall i \in \cat{I}\ \ud f_i[K] \leq_{\EM} \ud f_i[L]. &&\qedhere
    \end{align*}
\end{proof}

\begin{lemma} \label{l:exchange}
    Let $\mathcal{F}$ be a codirected set of closed subsets of a compact ordered space.
    \begin{enumerate}
        \item \label{i:u-cap}
        $\u \bigcap_{K \in \mathcal{F}} K = \bigcap_{K \in \mathcal{F}} \u K$.
        
        \item \label{i:d-cap}
        $\d \bigcap_{K \in \mathcal{F}} K = \bigcap_{K \in \mathcal{F}} \d K$.
        
        \item \label{i:ud-cap}
        $\ud \bigcap_{K \in \mathcal{F}} K = \bigcap_{K \in \mathcal{F}} \ud K$.
    \end{enumerate}
\end{lemma}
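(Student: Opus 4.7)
My plan is to handle the three parts by reducing (3) to (1) and (2), and (2) to (1) by duality, so the real work lies in (1). In every case the inclusion $\subseteq$ is immediate: if $y \geq x$ with $x \in \bigcap_{K} K$, then $x \in K$ for every $K \in \mathcal{F}$, so $y \in \u K$ for every $K$. So I only need to establish the reverse inclusion in (1).

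For the nontrivial inclusion $\bigcap_{K \in \mathcal{F}} \u K \subseteq \u \bigcap_{K \in \mathcal{F}} K$, I will use a standard compactness argument. Given $y \in \bigcap_{K} \u K$, the condition $y \in \u K$ means exactly that $K \cap \d y \neq \varnothing$. The key topological observation is that $\d y$ is closed in $X$: since the order is a closed subset of $X \times X$, $\d y$ is the preimage of this closed set under the continuous map $x \mapsto (x, y)$. Thus the family $\{K \cap \d y\}_{K \in \mathcal{F}}$ consists of nonempty closed subsets of the compact space $X$, and it is codirected because $\mathcal{F}$ is. By the finite intersection property in a compact space, $\bigcap_{K \in \mathcal{F}} (K \cap \d y)$ is nonempty, and any $x$ in this intersection lies in $\bigcap_{K} K$ and satisfies $x \leq y$, witnessing $y \in \u \bigcap_{K} K$.

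Part (2) will then follow from (1) by dualising the order on $X$, which is again a compact ordered space by \cref{r:dual}; this exchange $\u \leftrightarrow \d$ on the dual space converts the statement of (2) into that of (1). For part (3), I will use the identity $\ud A = \u A \cap \d A$ and combine (1) and (2):
\[
\ud \bigcap_{K} K = \u \bigcap_{K} K \,\cap\, \d \bigcap_{K} K = \bigcap_{K} \u K \,\cap\, \bigcap_{K} \d K = \bigcap_{K} (\u K \cap \d K) = \bigcap_{K} \ud K.
\]
There is no real obstacle here: the only subtlety is remembering that closedness of the order relation, rather than mere antisymmetry, is what makes $\d y$ (and $\u y$) closed and thus enables the finite-intersection-property argument that is the engine of the proof.
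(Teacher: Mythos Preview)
Your argument is correct and follows essentially the same route as the paper: the paper obtains \eqref{i:d-cap} by citing \cite[Proposition~3.31]{HofmannNevesEtAl2019}, derives \eqref{i:u-cap} by order-duality (\cref{r:dual}), and deduces \eqref{i:ud-cap} from the two, whereas you simply swap the roles of \eqref{i:u-cap} and \eqref{i:d-cap} and spell out the standard compactness/finite-intersection-property argument that underlies the cited reference. The only cosmetic remark is that closedness of $\d y$ is already available to you via \cref{l:up-set-closed}, so you need not re-derive it from the closedness of the order relation.
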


\begin{proof}
    \eqref{i:d-cap} is \cite[Proposition~3.31]{HofmannNevesEtAl2019}.
    \eqref{i:u-cap} is dual, in the sense of \cref{r:dual}.
    \eqref{i:ud-cap} follows from \eqref{i:d-cap} and \eqref{i:u-cap}.
\end{proof}

\begin{lemma}\label{l:image-large}
    Let $D \colon \cat{I} \to \CompOrd$ be a codirected diagram, let $(f_j \colon X \to D(j))_{j \in \cat{I}}$ be a limit for $D$, and let $i \in \cat{I}$.
    \begin{enumerate}
        \item \label{i:bigcap-u}
        $\bigcap_{j \to i} \u \ima D(j \to i) = \u \ima {f_i}$.
        
        \item \label{i:bigcap-d}
        $\bigcap_{j \to i} \d \ima D(j \to i) = \d \ima {f_i}$.
        
        \item \label{i:bigcap-ud}
        $\bigcap_{j \to i} \ud \ima D(j \to i) = \ud \ima {f_i}$.
    \end{enumerate}
\end{lemma}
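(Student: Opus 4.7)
My plan is to deduce all three equalities from the Bourbaki criterion (\cref{t:Bourbaki-compord}) combined with the exchange lemma (\cref{l:exchange}) for codirected intersections of closed sets and up/down/convex closures. Everything will take place inside the compact ordered space $D(i)$.

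First I would verify that the family $\{\ima D(j \to i) \mid j \to i\}$ is a codirected family of closed subsets of $D(i)$. Closedness follows because each $D(j \to i)$ is continuous and each $D(j)$ is compact, so its image is a compact (hence closed) subset of the Hausdorff space $D(i)$. Codirectedness is a direct consequence of the codirectedness of $\cat{I}$: given two arrows $j_1 \to i$ and $j_2 \to i$, pick an object $k$ with morphisms $k \to j_1$ and $k \to j_2$; then $\ima D(k \to i) \subseteq \ima D(j_1 \to i) \cap \ima D(j_2 \to i)$ using the factorizations $D(k \to i) = D(j_1 \to i) \circ D(k \to j_1) = D(j_2 \to i) \circ D(k \to j_2)$.

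Next, \cref{t:Bourbaki-compord}\eqref{i:images} gives
\[
\bigcap_{j \to i} \ima D(j \to i) = \ima f_i.
\]
Applying \cref{l:exchange}\eqref{i:u-cap} (resp.\ \eqref{i:d-cap}, \eqref{i:ud-cap}) to the codirected family established in the previous step, we can interchange the intersection with $\u(-)$ (resp.\ $\d(-)$, $\ud(-)$):
\[
\u \ima f_i = \u \bigcap_{j \to i} \ima D(j \to i) = \bigcap_{j \to i} \u \ima D(j \to i),
\]
and analogously for $\d$ and $\ud$. This yields all three claimed equalities.

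There is no real obstacle here: once codirectedness of the family of images is in place, the equalities are immediate from the two cited results. The only small point to double-check is the applicability of \cref{l:exchange} inside the compact ordered space $D(i)$, which holds since each $\ima D(j \to i)$ is a closed subset of $D(i)$.
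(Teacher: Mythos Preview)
Your proposal is correct and follows essentially the same route as the paper's proof: apply \cref{l:exchange} to swap the codirected intersection with $\u$, $\d$, or $\ud$, and then invoke \cref{t:Bourbaki-compord}\eqref{i:images}. You are in fact slightly more thorough than the paper, since you explicitly check that the family $\{\ima D(j \to i)\}_{j \to i}$ is codirected and consists of closed sets, which is precisely the hypothesis needed for \cref{l:exchange}; the paper leaves this implicit.
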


\begin{proof}
    \eqref{i:bigcap-d}.
    We have
    \begin{align*}
        \bigcap_{j \to i} \u \ima D(j \to i) & = \u \bigcap_{j \to i} \ima D(j \to i) && \text{by \cref{l:exchange}\eqref{i:u-cap}}\\
        & = \u \ima f_i && \text{by \cref{t:Bourbaki-compord}.}
    \end{align*}

    \eqref{i:bigcap-u} and \eqref{i:bigcap-ud} are proved similarly.
\end{proof}

\begin{proposition} \label{p:convex preserves-codirected}
    The convex Vietoris functor $\Vc$ on compact ordered spaces preserves codirected limits.
\end{proposition}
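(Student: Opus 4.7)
The approach is to apply the Bourbaki criterion for compact ordered spaces (\cref{t:Bourbaki-compord}) to the candidate cone $(\Vc f_i \colon \Vc X \to \Vc D(i))_{i \in \cat{I}}$, where $(f_i \colon X \to D(i))_{i \in \cat{I}}$ is an arbitrary limit cone for a codirected diagram $D \colon \cat{I} \to \CompOrd$. The initial-order condition~\eqref{i:separation} of the criterion is immediate from \cref{i:initial-mono}\eqref{i:EM-inclusion}, using $\ud K = K$ for convex $K \in \Vc X$. The content of the proof lies in verifying the images condition~\eqref{i:images}, namely
\[
\ima \Vc f_{i_0} \,=\, \bigcap_{\alpha \colon j \to i_0} \ima \Vc D(\alpha) \qquad \text{for each } i_0 \in \cat{I}.
\]

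One inclusion is immediate from functoriality. For the reverse, given $K_0$ in the right-hand side, I propose the preimage $K \df f_{i_0}^{-1}[K_0]$ as the required element of $\Vc X$: it is closed by continuity of $f_{i_0}$ and convex because $f_{i_0}$ is order-preserving and $K_0$ is convex. The substantive claim then reduces to the chain of identities
\[
\Vc f_{i_0}(K) \,=\, \ud f_{i_0}[K] \,=\, \ud\bigl(K_0 \cap \ima f_{i_0}\bigr) \,=\, K_0,
\]
whose only non-trivial step is the last; the inclusion $\ud(K_0 \cap \ima f_{i_0}) \subseteq K_0$ is clear from convexity of $K_0$.

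For the remaining inclusion the plan is as follows. For each $\alpha \colon j \to i_0$, any witness $K_{j, \alpha}$ of $K_0 = \ud D(\alpha)[K_{j, \alpha}]$ satisfies $D(\alpha)[K_{j, \alpha}] \subseteq \ud D(\alpha)[K_{j, \alpha}] = K_0$, and hence
\[
K_0 \,=\, \ud D(\alpha)[K_{j, \alpha}] \,\subseteq\, \ud\bigl(K_0 \cap \ima D(\alpha)\bigr).
\]
Thus $K_0$ is contained in every member of the codirected family $\{\ud(K_0 \cap \ima D(\alpha))\}_\alpha$ (codirectedness being inherited from cofilteredness of $\cat{I}$), and therefore in their intersection. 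By \cref{l:exchange}\eqref{i:ud-cap} together with \cref{t:Bourbaki-compord}\eqref{i:images} applied to the original cone $(f_i)$,
\[
\bigcap_\alpha \ud\bigl(K_0 \cap \ima D(\alpha)\bigr) \,=\, \ud \bigcap_\alpha \bigl(K_0 \cap \ima D(\alpha)\bigr) \,=\, \ud\bigl(K_0 \cap \ima f_{i_0}\bigr),
\]
so $K_0 \subseteq \ud(K_0 \cap \ima f_{i_0})$, as desired. The main obstacle is recognising that $K = f_{i_0}^{-1}[K_0]$ is already the right candidate---no extra convex-hull operation being required---so that the images condition for $\Vc \circ D$ reduces, via \cref{l:exchange} and the exchange of $\ud$ with codirected intersections, to the images condition for the original diagram $D$.
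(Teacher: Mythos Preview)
Your proposal is correct and follows essentially the same route as the paper's proof: both verify the Bourbaki criterion, dispatch condition~\eqref{i:separation} via \cref{i:initial-mono}\eqref{i:EM-inclusion}, and for condition~\eqref{i:images} use the witnesses $K_{j,\alpha}$ to obtain $K_0 \subseteq \ud(K_0 \cap \ima D(\alpha))$, then pass to the intersection via \cref{l:exchange}\eqref{i:ud-cap} and \cref{t:Bourbaki-compord}\eqref{i:images} to conclude $K_0 = \ud f_{i_0}[f_{i_0}^{-1}[K_0]]$. The only difference is organisational: you name the candidate $f_{i_0}^{-1}[K_0]$ at the outset and verify the two inclusions separately, whereas the paper derives it at the end of a single chain of equalities (additionally intersecting with $K$ throughout and using its convexity once more).
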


\begin{proof}
    Let $(f_i \colon X \to D(i))_{i \in \cat{I}}$ be a limit for a codirected diagram $D \colon \cat{I} \to \CompOrd$.
    We prove that $(\Vc f_i \colon \Vc X \to \Vc D(i))_{i \in \cat{I}}$ is a limit for $\Vc \circ D$ by verifying that it satisfies the two conditions in the Bourbaki-criterion (\cref{t:Bourbaki-compord}).
    Let $d_{ji}$ denote the morphism $D(j \to i)$.
    
    For the first condition, let $K, L \in \Vc X$ be such that, for all $i \in \cat{I}$, we have $(\Vc f_i)(K) \leq_{\EM} (\Vc f_i)(L)$, i.e.\ $\ud f_i[K] \leq_{\EM} \ud f_i[L]$.
    Then, by \cref{i:initial-mono}\eqref{i:EM-inclusion}, $\ud K \leq_{\EM} \ud L$.
    Therefore, the cone $(\Vc f_i \colon \Vc X \to \Vc D(i))_{i \in \cat{I}}$ satisfies condition \eqref{i:separation} in the Bourbaki-criterion.

    For the second condition, let $i \in \cat{I}$ and let us prove that $\bigcap_{j \to i}\ima \Vc d_{ji} = \ima \Vc{f_{i}}$.
    The right-to-left inclusion is easy.
    For the left-to-right inclusion, let $K \in \bigcap_{j \to i}\ima \Vc d_{ji}$.
    Then, for each $j \to i$ there is a closed convex subset $S_j$ of $D(j)$ such that $\ud d_{ji}[S_j] = K$.
    From $\ud d_{ji}[S_j] = K$ and the convexity of $K$, we deduce $S_j \subseteq K$.
    Since we also have $d_{ji}[S_j] \subseteq \ima d_{ji}$, we have $d_{ji}[S_j] \subseteq K \cap \ima d_{ji}$.
    Therefore, $K = \ud d_{ji}[S_j] \subseteq \ud (K \cap \ima d_{ji})$.
    Since this holds for all $j \to i$, we have $K \subseteq \bigcap_{j \to i} \ud (K \cap \ima d_{ji})$.
    Therefore, 
    \begin{align*}
        K & = K \cap \bigcap_{j \to i} \ud (K \cap \ima d_{ji})\\
        & = K \cap \ud \bigcap_{j \to i} (K \cap \ima d_{ji}) && \text{by \cref{l:exchange}\eqref{i:ud-cap}}\\
        & = \ud \left(K \cap \bigcap_{j \to i} \ima d_{ji}\right) && \text{using the convexity of $K$}\\
        & = \ud (K \cap \ima f_i) && \text{by \cref{t:Bourbaki-compord}}\\
        & = \ud f_i[f_i^{-1}[K]].
    \end{align*}
    Thus, $K = (\Vc f_{i})(f_i^{-1}[K])$, and therefore $K \in \ima (\Vc f_{i})$.
\end{proof}

\begin{proposition} \label{p:compord-monadic}
    The convex Vietoris functor $\Vc  \colon \CompOrd \to \CompOrd$ is a covarietor.
\end{proposition}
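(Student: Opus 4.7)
The plan is to mirror the proof of \cref{c:monadic} in the previous section essentially verbatim, relying on \cref{p:convex preserves-codirected} as the key input. Recall that a covarietor is defined by dualising the notion of varietor: $\Vc$ is a covarietor if the forgetful functor $\CoAlg(\Vc) \to \CompOrd$ is a left adjoint, equivalently (by the dual of \cref{t:varietor-monadic} and \cref{t:free-algebra-construction}) if the dual of Ad\'amek's free-algebra construction stops for every object of $\CompOrd$. Thus the task reduces to invoking a convergence criterion for that construction.

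First I would note that $\CompOrd$ is complete (see \cref{r:limits}), so the dualised construction makes sense in this category. Next, \cref{p:convex preserves-codirected} tells us that $\Vc$ preserves all codirected limits in $\CompOrd$; in particular it preserves limits of $\omega^{\op}$-cochains. By the dual of \cref{p:chains}, this guarantees that the dual free-algebra construction stops after $\omega$ steps for every object $X$ of $\CompOrd$. By the dual of \cref{t:free-algebra-construction}, the object thus obtained is a cofree $\Vc$-coalgebra on $X$, so the forgetful functor $\CoAlg(\Vc) \to \CompOrd$ has a right adjoint, which is to say $\Vc$ is a covarietor.

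There is essentially no obstacle here, since all the substantive work has already been done: the nontrivial input is preservation of codirected limits (\cref{p:convex preserves-codirected}), whose proof leveraged the Bourbaki criterion for compact ordered spaces and the exchange lemmas for up- and down-closures (\cref{l:exchange,l:image-large}). The only thing worth being careful about is the dualisation of the results of \cref{s:algebras}, which was flagged explicitly in the remark following \cref{t:varietor-monadic} and is routine. I would therefore present the proof as a three-line argument of the form: $\Vc$ preserves codirected limits by \cref{p:convex preserves-codirected}; in particular it preserves $\omega^{\op}$-limits; hence, by the duals of \cref{p:chains} and \cref{t:free-algebra-construction}, $\Vc$ is a covarietor.
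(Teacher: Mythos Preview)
Your proposal is correct and matches the paper's proof essentially verbatim: invoke \cref{p:convex preserves-codirected} to get preservation of $\omega^\op$-limits, then apply (the duals of) \cref{p:chains} and \cref{t:free-algebra-construction}. The paper's proof is exactly the three-line argument you describe at the end.
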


\begin{proof}	
    By \cref{p:convex preserves-codirected}, $\Vc$ preserves codirected limits: in particular, it preserves $\omega^\op$-limits.
    Thus, by \cref{p:chains,t:free-algebra-construction}, $\Vc$ is a covarietor.
\end{proof}

\begin{corollary}
    The forgetful functor $\CoAlg(\Vc) \to \CompOrd$ is comonadic.
\end{corollary}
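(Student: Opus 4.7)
The plan is to observe that this corollary is an immediate consequence of \cref{p:compord-monadic}, which asserts that $\Vc$ is a covarietor, together with the dualised version of \cref{t:varietor-monadic}. Recall that for an endofunctor $F$ on a category, being a covarietor means (by the remark on dualisation following the discussion of \cref{t:varietor-monadic}) that the forgetful functor $\CoAlg(F) \to \cat{C}$ is a left adjoint, and the dual of \cref{t:varietor-monadic} then upgrades this to comonadicity.

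Concretely, I would proceed as follows. First, I would invoke \cref{p:compord-monadic} to conclude that $\Vc \colon \CompOrd \to \CompOrd$ is a covarietor, which means the forgetful functor $U \colon \CoAlg(\Vc) \to \CompOrd$ admits a right adjoint (the co-free coalgebra functor). Second, I would apply the dualisation of \cref{t:varietor-monadic} (as flagged in the remark on dualisation in \cref{s:algebras}), which states that the forgetful functor from coalgebras of an endofunctor is comonadic whenever it admits a right adjoint. Combining these, $U$ is comonadic.

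This mirrors exactly the analogous step taken for the Vietoris functor on compact Hausdorff spaces: the passage from \cref{c:monadic} to the ensuing corollary about comonadicity of $U \colon \CoAlg(\V) \to \CH$ is formally the same argument, and nothing about the ordered setting changes the reasoning. I do not anticipate any genuine obstacle here; the work has already been done in establishing that $\Vc$ preserves codirected limits (\cref{p:convex preserves-codirected}) and hence satisfies Ad\'amek's construction-stopping criterion (\cref{p:chains,t:free-algebra-construction}), which together yield the covarietor property. The corollary is thus a purely formal consequence, and the proof can be written in a single short paragraph citing \cref{p:compord-monadic} and the dual of \cref{t:varietor-monadic}.
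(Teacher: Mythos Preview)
Your proposal is correct and matches the paper's approach exactly: the corollary is stated without proof immediately after \cref{p:compord-monadic}, since being a covarietor means precisely (via the dual of \cref{t:varietor-monadic}) that the forgetful functor is comonadic.
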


In light of \cref{t:composition-monadic}, the next step to prove that $\CoAlg(\Vc)^\op$ is monadic over $\Set$ is to prove that $\Vc$ preserves coreflexive equalizers.

\begin{lemma}[{\cite[Theorem~2.6]{HofmannNevesEtAl2018}}]\label{l:regmono-in-compord}
    The regular monomorphisms in $\CompOrd$ are precisely the order-reflecting morphisms.
\end{lemma}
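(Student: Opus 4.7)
My plan is to tackle the two directions separately.

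For the forward direction, I would start from a regular monomorphism $f \colon X \to Y$ realized as an equalizer of some pair $g, h \colon Y \rightrightarrows Z$. By \cref{r:limits}, such an equalizer is computed as the set-theoretic equalizer $E = \{y \in Y \mid g(y) = h(y)\}$ with the subspace topology and the restriction of the order of $Y$, so the inclusion $E \hookrightarrow Y$ is order-reflecting by construction. Since $f$ factors through $E \hookrightarrow Y$ via an isomorphism of compact ordered spaces, $f$ is order-reflecting too.

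For the converse, let $f \colon X \to Y$ be order-reflecting. First I would note that $f$ is injective: $f(x) = f(x')$ entails $f(x) \leq f(x')$ and $f(x') \leq f(x)$, which by order-reflection gives $x = x'$. Using that $X$ is compact and $Y$ Hausdorff, $f[X]$ is then closed in $Y$, and order-reflection shows that $f$ identifies $X$ with $f[X]$ equipped with the order inherited from $Y$. It therefore suffices to show that the inclusion $A \hookrightarrow Y$ of a closed ordered subspace into a compact ordered space is a regular monomorphism. The plan here is to invoke the result of Abbadini--Reggio \cite{AbbadiniReggio2020} that $\hom_\CompOrd(-, [0,1]) \colon \CompOrd^\op \to \Set$ is monadic, so that $\CompOrd^\op$ is a variety whose regular epis are the surjective homomorphisms; dualising, a morphism in $\CompOrd$ is a regular monomorphism exactly when $\hom_\CompOrd(-, [0,1])$ sends it to a surjection. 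The task then reduces to showing that every continuous order-preserving map $A \to [0,1]$ extends to a continuous order-preserving map $Y \to [0,1]$.

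The hard part will be this extension statement, which is Nachbin's order-theoretic analogue of the Tietze extension theorem from \cite{Nachbin1965}; it is itself a consequence of the order-preserving Urysohn-type separation recorded in \cref{l:normal}. I also expect that a naive attempt to build the equalizer using only the zero function on $A$ and its extensions will fail when a point of $Y$ lies in the order-convex hull of $A$ but not in $A$, which is why passing through arbitrary pairs of morphisms $Y \to [0,1]$ agreeing on $A$---equivalently, through the monadic characterisation above---is needed.
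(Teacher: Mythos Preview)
The paper supplies no proof of this lemma; it is quoted directly from \cite[Theorem~2.6]{HofmannNevesEtAl2018}, so there is no in-paper argument to compare your proposal against.

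On its own merits your argument is sound: the forward direction is immediate from \cref{r:limits}, and for the converse your reduction---via monadicity of $\CompOrd^\op$ and the general fact that regular epimorphisms in a category monadic over $\Set$ are exactly the morphisms with surjective underlying map---to Nachbin's ordered Tietze extension theorem is valid. One point worth making explicit is the dependency structure: proofs of monadicity of $\CompOrd^\op$ (as in \cite{AbbadiniReggio2020}) typically establish that $[0,1]$ is regular injective, which already uses that regular monomorphisms are, up to isomorphism, inclusions of closed ordered subspaces---i.e.\ your forward direction---together with Nachbin's extension theorem. So your outline is not circular, but the backward implication leans on machinery that has already absorbed the hard analytic step you single out; a more self-contained alternative is to show directly that an order-reflecting morphism is the equaliser of its cokernel pair in $\CompOrd$.
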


In the following, $\leq_{\EM}$ denotes the Egli-Milner order, which can be defined on the set of convex subsets of a poset in the same way:
\[
K \leq_{\EM} L \iff \u L \subseteq \u K \text{ and } \d K \subseteq \d L.
\]

\begin{lemma} \label{l:reg-mono}
    Let $f \colon X \to Y$ be an order-reflecting map between posets, and let $K$ and $L$ be subsets of $X$.
    \begin{enumerate}
        \item \label{i:reg-mono-ks}
        $\u K \subseteq \u L$ if and only if $\u f[K] \subseteq \u f[L]$.
        \item	\label{i:reg-mono-c}
        $\d K \subseteq \d L$ if and only if $\d f[K] \subseteq \d f[L]$.
        \item \label{i:reg-mono-ud}
        $\ud K \leq_{\EM} \ud L$ if and only if $\ud f[K] \leq_{\EM} \ud f[L]$.
    \end{enumerate}
\end{lemma}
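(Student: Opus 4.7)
The plan is to prove (1) directly from the two assumed properties of $f$ (order-preservation, which is built into being a morphism of posets/compact ordered spaces, and order-reflection, which is the stated hypothesis), then deduce (2) by dualizing the order (in the spirit of \cref{r:dual}), and finally reduce (3) to (1) and (2) via the identities $\u \ud K = \u K$ and $\d \ud K = \d K$.

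For (1), the forward direction does not need order-reflection: given $\u K \subseteq \u L$ and $y \in \u f[K]$, there is $x \in K$ with $f(x) \leq y$; then $x \in K \subseteq \u L$ gives some $l \in L$ with $l \leq x$, so by order-preservation $f(l) \leq f(x) \leq y$, showing $y \in \u f[L]$. For the reverse direction, I use order-reflection: given $\u f[K] \subseteq \u f[L]$ and $x \in \u K$, one checks that $f(x) \in \u f[K]$ (since some $k \in K$ satisfies $k \leq x$, hence $f(k) \leq f(x)$); thus $f(x) \in \u f[L]$, yielding $l \in L$ with $f(l) \leq f(x)$, and order-reflection gives $l \leq x$, i.e.\ $x \in \u L$.

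Part (2) is obtained from (1) by replacing $X$ and $Y$ with their order duals, under which $f$ remains an order-reflecting map; the statement then transports verbatim. For (3), observe that $\u \ud K = \u K$ and $\d \ud K = \d K$ (and likewise for $L$ and for the images $f[K]$, $f[L]$), so that
\[
\ud K \leq_{\EM} \ud L \iff \u L \subseteq \u K \text{ and } \d K \subseteq \d L,
\]
and analogously for $\ud f[K] \leq_{\EM} \ud f[L]$. Applying (1) to the pair $(L, K)$ and (2) to the pair $(K, L)$ yields the claimed equivalence.

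There is no real obstacle here; the only point that requires a moment's care is recognising that the reverse implication in (1) is precisely where the order-reflection hypothesis on $f$ is used (the forward implication relies only on order-preservation), and that the two inclusions defining $\leq_\EM$ on convex closures simplify cleanly to the two inclusions of up- and down-closures addressed by (1) and (2).
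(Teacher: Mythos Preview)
Your proof is correct and follows essentially the same approach as the paper: the paper packages the reverse implication in (1) via the identity $f^{-1}[\u f[Y]] = Y$ for upsets $Y$ (which unwinds to exactly your element-chase), and handles (2) and (3) by the same dualization and the same reduction $\u \ud K = \u K$, $\d \ud K = \d K$. Your explicit observation that order-preservation is needed for the forward direction while order-reflection is what drives the reverse is accurate and worth keeping.
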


\begin{proof}
    \eqref{i:reg-mono-ks}.
    The left-to-right implication holds because $\u K \subseteq \u L$ implies $\u f[K] = \u f[\u K] \subseteq \u f[\u L] = \u f[L]$.
    We prove the opposite implication.
    Since $f$ is order-reflecting, for every upset $Y$ of $X$ we have $f^{-1}[\u f[Y]] = Y$.
    Therefore, if $\u f[K] \subseteq \u f[L]$, then $\u K = \u f^{-1}[\u f[K]] \subseteq \u f^{-1}[\u f[L]] = \u L$.

    \eqref{i:reg-mono-c}. This is the dual of \eqref{i:reg-mono-ks}, in the sense of \cref{r:dual}.

    \eqref{i:reg-mono-ud}. We have 
    \begin{align*}
        K \leq_{\EM} L & \iff \u L \subseteq \u K \text{ and } \d K \subseteq \d L\\
        &  \iff \u f[L] \subseteq \u f[K] \text{ and } \d f[K]  \subseteq \d f[L] && \text{by \eqref{i:reg-mono-ks} and \eqref{i:reg-mono-c}}\\
        & \iff \u \ud f[L] \subseteq \u \ud f[K] \text{ and }\d \ud f[K] \subseteq \d \ud f[L]\\
        & \iff \ud f[K] \leq_{\EM} \ud f[L].&&\qedhere
    \end{align*}
\end{proof}

\begin{lemma} \label{l:preserve-regmono}
    The convex Vietoris functor $\Vc \colon \CompOrd \to \CompOrd$ preserves regular monomorphisms, i.e.,
    if $f$ is an order-reflecting morphism of compact ordered spaces, then $\Vc f$ is order-reflecting.
\end{lemma}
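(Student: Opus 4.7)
The plan is to reduce the claim directly to \cref{l:reg-mono}\eqref{i:reg-mono-ud}. Concretely, let $f \colon X \to Y$ be an order-reflecting morphism of compact ordered spaces and let $K, L \in \Vc X$. By definition of $\Vc f$, we have $(\Vc f)(K) = \ud f[K]$ and $(\Vc f)(L) = \ud f[L]$, so
\[
(\Vc f)(K) \leq_{\EM} (\Vc f)(L) \iff \ud f[K] \leq_{\EM} \ud f[L].
\]
By \cref{l:reg-mono}\eqref{i:reg-mono-ud} (applied to the underlying order-reflecting map of $f$), the right-hand side is equivalent to $\ud K \leq_{\EM} \ud L$. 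Since $K$ and $L$ are convex by hypothesis, $\ud K = K$ and $\ud L = L$, so this is just $K \leq_{\EM} L$. Thus $\Vc f$ is order-reflecting.

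The only subtle point worth checking is that \cref{l:reg-mono} really applies: it is stated for subsets of posets and an order-reflecting map between posets, which is precisely the input we have after forgetting the topology. Combined with \cref{l:regmono-in-compord}, which identifies regular monos in $\CompOrd$ with order-reflecting morphisms, this shows that $\Vc$ preserves regular monomorphisms. There is no real obstacle here; the substantive work has already been done in \cref{l:reg-mono}, and the present statement is essentially a packaging of \eqref{i:reg-mono-ud} together with the convexity of the elements of $\Vc X$.
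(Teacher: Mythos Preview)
Your proof is correct and follows essentially the same route as the paper's: both unfold $(\Vc f)(K) \leq_{\EM} (\Vc f)(L)$ to $\ud f[K] \leq_{\EM} \ud f[L]$ and invoke \cref{l:reg-mono}\eqref{i:reg-mono-ud}, together with \cref{l:regmono-in-compord} for the identification of regular monomorphisms with order-reflecting maps. The only difference is cosmetic: you make explicit the step $\ud K = K$, $\ud L = L$ from convexity, which the paper leaves implicit.
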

\begin{proof}
    We recall from \cref{l:regmono-in-compord} that regular monomorphisms in $\CompOrd$ are precisely the order-reflecting morphisms.
    Let $f \colon X \to Y$ be a regular monomorphism in $\CompOrd$.
    For $K, L \in \Vc X$ we have
    \begin{align*}
        &\Vc f (K) \leq_{\EM} \Vc f (L) \\
        & \iff \ud f[K] \leq_{\EM} \ud f[L]\\
        & \iff K \leq_{\EM} L && \text{by \cref{l:reg-mono}\eqref{i:reg-mono-ud}.}
    \end{align*}
    This shows that $\Vc f$ is order-reflecting, and hence a regular monomorphism.
\end{proof}

\begin{remark} \label{r:equalizer-compord}
    By \cref{r:limits,l:regmono-in-compord}, a morphism $h \colon Z \to X$ in $\CompOrd$ is the equalizer of $f, g \colon X \rightrightarrows Y$ if and only if (i) an element $x \in X$ belongs to the image of $h$ if and only if $f(x) = g(x)$, and (ii) $h$ is order-reflecting.
\end{remark}

\begin{lemma} \label{l:exist-dirsup}
    Let $X$ be a compact ordered space.
    \begin{enumerate}
        \item \label{i:dir-1}
        Any directed subset $Y$ of $X$ has a supremum in $X$, which belongs to the topological closure of $Y$.
        
        \item \label{i:codir-1}
        Any codirected subset $Y$ of $X$ has an infimum in $X$, which belongs to the topological closure of $Y$.
    \end{enumerate}
\end{lemma}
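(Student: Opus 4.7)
The plan is to prove part \eqref{i:dir-1} by a standard compactness argument, and then deduce part \eqref{i:codir-1} by reversing the order via \cref{r:dual}.

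For \eqref{i:dir-1}, let $Y \subseteq X$ be a directed subset. I would consider the family of closed subsets
\[
    \mathcal{F} \df \{\overline{Y \cap \u y} \mid y \in Y\}
\]
of $X$. Directedness of $Y$ gives $\mathcal{F}$ the finite intersection property: given $y_1, \dots, y_n \in Y$, pick a common upper bound $y_0 \in Y$, and then $y_0 \in Y \cap \bigcap_{i=1}^n \u y_i$, so $y_0$ belongs to each $\overline{Y \cap \u y_i}$. Compactness of $X$ then yields some $s \in \bigcap \mathcal{F}$.

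Next I would verify that this $s$ is the supremum of $Y$ and lies in $\overline{Y}$. Since the order of $X$ is closed in $X \times X$, each set $\u y$ and each set $\d u$ is closed. Hence $\overline{Y \cap \u y} \subseteq \u y$, giving $s \geq y$ for every $y \in Y$, so $s$ is an upper bound. For minimality, any other upper bound $u$ of $Y$ satisfies $Y \subseteq \d u$, and since $\d u$ is closed we get $\overline{Y \cap \u y} \subseteq \d u$ for each $y$, whence $s \leq u$. Finally, picking any $y \in Y$, we have $s \in \overline{Y \cap \u y} \subseteq \overline{Y}$.

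For \eqref{i:codir-1}, I would apply \eqref{i:dir-1} to the compact ordered space obtained by reversing the order on $X$, as permitted by \cref{r:dual}; a codirected subset of $X$ becomes a directed subset of the dualised space, and its supremum there is the desired infimum in $X$. The argument is essentially routine manipulation of closed sets together with the finite intersection property, and I do not anticipate any substantial obstacle.
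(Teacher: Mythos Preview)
Your argument is correct. The family $\mathcal{F}$ has the finite intersection property exactly as you describe, compactness gives a point $s$ in the intersection, and the closedness of principal up- and down-sets (which follows immediately from the closedness of the order relation, or from \cref{l:up-set-closed}) yields both that $s$ is an upper bound and that it is the least one; membership in $\overline{Y}$ is automatic. Your deduction of \eqref{i:codir-1} from \eqref{i:dir-1} via \cref{r:dual} is exactly what the paper does.

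The only difference from the paper is that the paper does not spell out any argument for \eqref{i:dir-1} at all: it simply cites \cite[Proposition~VI.1.3(ii)]{GierzHofmannEtAl2003}. Your proof is a self-contained version of the standard compactness argument that underlies that reference, so the content is the same but your write-up is more informative for a reader who does not have the reference at hand.
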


\begin{proof}
    \eqref{i:dir-1} holds by \cite[Proposition~VI.1.3.(ii)]{GierzHofmannEtAl2003}.
    \eqref{i:codir-1} is dual to \eqref{i:dir-1}, in the sense of \cref{r:dual}.
\end{proof}

\begin{lemma}\label{l:preserve-codir-infima}
    Every continuous order-preserving map between compact ordered spaces preserves directed suprema and codirected infima.
\end{lemma}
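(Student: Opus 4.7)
The plan is to show the directed-suprema case directly, and then invoke duality (\cref{r:dual}) for codirected infima. Fix a continuous order-preserving map $f \colon X \to Y$ between compact ordered spaces and a directed subset $D \subseteq X$.

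First, I would establish that both suprema exist. By \cref{l:exist-dirsup}\eqref{i:dir-1}, $s \coloneqq \sup D$ exists in $X$ and lies in the topological closure $\overline{D}$. Since $f$ is order-preserving, the image $f[D]$ is directed in $Y$, so by the same lemma $t \coloneqq \sup f[D]$ exists in $Y$ and lies in $\overline{f[D]}$. The inequality $t \leq f(s)$ is immediate from order-preservation, because $f(s)$ is an upper bound of $f[D]$.

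The reverse inequality $f(s) \leq t$ is the step where the topology enters. The key observation is that the downset $\d t = \{y \in Y \mid y \leq t\}$ is closed in $Y$: indeed, $\d t$ is the preimage of the closed order $\mathord{\leq} \subseteq Y \times Y$ under the continuous map $y \mapsto (y, t)$. Then by continuity of $f$, the set $f^{-1}[\d t]$ is closed in $X$. Since $f[D] \subseteq \d t$, this closed set contains $D$, hence contains $\overline{D}$, and in particular contains $s$. Therefore $f(s) \in \d t$, i.e.\ $f(s) \leq t$, giving $f(s) = t$ as desired.

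Finally, the statement about codirected infima follows by dualising the order of both $X$ and $Y$ (\cref{r:dual}): the map $f$ remains continuous and order-preserving between the order-duals, a codirected subset of $X$ becomes directed, and its infimum becomes a supremum, so the previous case applies. There is no real obstacle here; the only point requiring care is the explicit argument that principal downsets (and by duality upsets) of a compact ordered space are closed, which is a direct consequence of the definition.
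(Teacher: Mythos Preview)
Your proof is correct and follows essentially the same route as the paper: both argue that $f^{-1}[\d \sup f[D]]$ is closed, contains $D$ and hence its closure, and then use that $\sup D$ lies in that closure to get $f(\sup D) \leq \sup f[D]$, with the reverse inequality by monotonicity and the codirected case by \cref{r:dual}. You are slightly more explicit about why $\d t$ is closed, which the paper leaves implicit.
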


\begin{proof}
    We prove the preservation of directed suprema.
    Let $f \colon X \to Y$ be a continuous order-preserving map between compact ordered spaces, and let $D$ be a directed subset of $X$.
    We recall from \cref{l:exist-dirsup} that $X$ and $Y$ admit all directed suprema.
    Since $f$ is continuous, $f^{-1}[\d \sup f[D]]$ is a closed subset of $X$.
    Since $f^{-1}[\d \sup f[D]]$ contains $D$, it contains also the topological closure of $D$.
    Since $\sup D$ belongs to the topological closure of $D$ by \cref{l:exist-dirsup}, we have $\sup D \in f^{-1}[\d \sup f[D]]$, i.e.\ $f(\sup D) \leq \sup f[D]$.
    The converse inequality $\sup f[D] \leq f(\sup D)$ holds by monotonicity of $f$.
    This proves the preservation of directed suprema.

    The preservation of codirected infima is dual, in the sense of \cref{r:dual}.
\end{proof}

\begin{lemma} \label{l:equalizer}
    Let $h \colon E \to X$ be an equalizer of two morphisms $f,g \colon X \rightrightarrows Y$ in $\CompOrd$ with a common retraction, and let $K$ be a closed subset of $X$.
    \begin{enumerate}
        \item \label{i:eq-ks} 
        If $\u f[K] = \u g[K]$, then $\u K = \u(K \cap \ima h)$.
        
        \item \label{i:eq-c} 
        If $\d f[K] = \d g[K]$, then $\d K = \d(K \cap \ima h)$.
        
        \item \label{i:eq-ud}
        If $\ud f[K] = \ud g[K]$, then $\ud K = \ud (K \cap \ima h)$.
    \end{enumerate}
\end{lemma}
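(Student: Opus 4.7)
The plan is to exploit the description of equalizers in $\CompOrd$ from \cref{r:equalizer-compord} --- namely that $\ima h = \{x \in X \mid f(x) = g(x)\}$ --- together with the common retraction $k \colon Y \to X$ (so $kf = kg = 1_X$) and the preservation of codirected infima by morphisms in $\CompOrd$ (\cref{l:preserve-codir-infima}), in order to produce, from each $x \in K$, a witness in $K \cap \ima h$ comparable with $x$ in the required direction.

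Part (3) reduces to (1) and (2): for any subset $A$ of a poset one has $\u(\ud A) = \u A$ and $\d(\ud A) = \d A$, so the hypothesis $\ud f[K] = \ud g[K]$ entails both $\u f[K] = \u g[K]$ and $\d f[K] = \d g[K]$; moreover $\ud K = \u K \cap \d K$, so combining the conclusions of (1) and (2) yields (3). And (2) is the dualisation of (1) in the sense of \cref{r:dual}, so everything comes down to (1).

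For (1), the nontrivial inclusion is $\u K \subseteq \u(K \cap \ima h)$. I would fix $x \in K$ and build a decreasing sequence $x_0 = x \geq x_1 \geq x_2 \geq \cdots$ in $K$ by alternating two recipes: given $x_{2m}$, exploit $f(x_{2m}) \in \u f[K] = \u g[K]$ to choose $x_{2m+1} \in K$ with $g(x_{2m+1}) \leq f(x_{2m})$; given $x_{2m+1}$, exploit $g(x_{2m+1}) \in \u g[K] = \u f[K]$ to choose $x_{2m+2} \in K$ with $f(x_{2m+2}) \leq g(x_{2m+1})$. Applying the retraction $k$ to each inequality gives $x_{2m+1} = kg(x_{2m+1}) \leq kf(x_{2m}) = x_{2m}$, and similarly $x_{2m+2} \leq x_{2m+1}$, so the sequence is genuinely decreasing in $K$. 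The set $\{x_n\}$ is codirected, so by \cref{l:exist-dirsup} it has an infimum $x_\infty$ lying in its topological closure and hence in the closed set $K$; and \cref{l:preserve-codir-infima} identifies $f(x_\infty)$ and $g(x_\infty)$ with $\inf_n f(x_n)$ and $\inf_n g(x_n)$ respectively. Feeding the alternating inequalities (restricted to the cofinal subsequences $\{f(x_{2m})\}$ and $\{g(x_{2m+1})\}$) into these infima yields $g(x_\infty) \leq f(x_\infty) \leq g(x_\infty)$, so $x_\infty \in K \cap \ima h$ and $x_\infty \leq x$, as required.

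The main obstacle is precisely this limit step: a one-sided recursion using only $g(x_{n+1}) \leq f(x_n)$ would deliver only $g(x_\infty) \leq f(x_\infty)$, so it is the interleaving of the two recipes --- powered by the common retraction on both sides --- that pins down the equality $f(x_\infty) = g(x_\infty)$ at the limit. Once this idea is in place, the remaining work (monotonicity of $k$, closedness of $K$, cofinality of the two subsequences) is entirely routine.
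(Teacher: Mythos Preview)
Your proposal is correct and follows essentially the same approach as the paper: the paper also builds an alternating decreasing chain in $K$ (written as $\dots \leq d_2' \leq d_2 \leq d_1' \leq d_1 \leq x$) using the two symmetric one-step claims powered by the common retraction, takes its infimum in $K$ via \cref{l:exist-dirsup}, and then uses \cref{l:preserve-codir-infima} together with the interleaving of the $f$- and $g$-images to conclude $f(d)=g(d)$; parts \eqref{i:eq-c} and \eqref{i:eq-ud} are likewise reduced to \eqref{i:eq-ks} exactly as you describe.
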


\begin{proof}
    \eqref{i:eq-ks}.
    Suppose $\u f[K] = \u g[K]$.
    
    \begin{claim}\label{cl:up-one-step}
        For all $x \in K$, there is $y \in K$ such that $y \leq x$ and $g(y) \leq f(x)$.
    \end{claim}
    
    \begin{claimproof}
        Let $x \in K$.
        Since $f(x) \in \u f[K] = \u g[K]$, there is $y \in K$ such that $g(y) \leq f(x)$.
        Since $f$ and $g$ have a common retraction $k \colon Y \to X$, we have
        $y = k(g(y)) \leq k(f(x)) = x$.
    \end{claimproof}
    
    Similarly, one proves the following claim.
    \begin{claim}\label{cl:up-one-step-bis}
        For all $x \in K$, there is $y \in K$ such that $y \leq x$ and $f(y) \leq g(x)$.
    \end{claim}
    
    Fix $x \in K$, and let us prove $x \in \u(K \cap \ima h)$.
    By Claims~\ref{cl:up-one-step} and \cref{cl:up-one-step-bis}, there is a chain $C$
    \[
    \dots \leq d_3' \leq d_3 \leq d_2' \leq d_2 \leq d_1' \leq d_1 \leq x
    \]
    of elements of $K$ such that 
    \begin{equation} \label{e:interchange}
        \dots \leq f(d_3') \leq g(d_3) \leq f(d_2') \leq g(d_2) \leq f(d_1') \leq g(d_1) \leq f(x).
    \end{equation}
    By \cref{l:exist-dirsup}, and since $K$ is closed, $C$ has an infimum $d$ belonging to $K$.
    Then
    \begin{align*}
        f(d) & = \inf f[C] && \text{by \cref{l:preserve-codir-infima}}\\
        & = \inf g[C] && \text{by \eqref{e:interchange}}\\
        & = g(d) && \text{by \cref{l:preserve-codir-infima}.}
    \end{align*}
    Since $f(d) = g(d)$ and $h$ is the equalizer of $f$ and $g$, by \cref{r:equalizer-compord} we have $d \in \ima h$.
    This shows $K \subseteq \u(K \cap \ima h)$, which implies $\u K \subseteq \u(K \cap \ima h)$.
    The converse inclusion is immediate.
    
    \eqref{i:eq-c}. This is the dual of \eqref{i:eq-ks}.
    
    \eqref{i:eq-ud}.
    Suppose $\ud f[K] = \ud g[K]$.
    Then $\u f[K] = \u \ud f[K] = \u \ud g[K] = \u g[K]$.
    Thus, by the part \eqref{i:eq-ks} of the present lemma, $K \subseteq \u (K \cap \ima h)$.
    Analogously, $K \subseteq \d (K \cap \ima h)$.
    Therefore, 
    \[
    K \subseteq (\u (K \cap \ima h)) \cap (\d (K \cap \ima h)) = \ud (K \cap \ima h),
    \]
    and this implies $\ud K \subseteq \ud (K \cap \ima h)$. The converse inclusion is immediate.
\end{proof}

\begin{proposition} \label{p:compord-coreflexive-equalizers}
    The functor $\Vc \colon \CompOrd \to \CompOrd$ preserves coreflexive equalizers.
\end{proposition}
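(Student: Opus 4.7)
My plan is to use \cref{r:equalizer-compord}, which characterizes the equalizers in $\CompOrd$ as the order-reflecting maps whose image is the set $\{x \in X \mid f(x) = g(x)\}$. Writing $h \colon E \to X$ for the equalizer of $f, g \colon X \rightrightarrows Y$ with common retraction $k \colon Y \to X$, I will accordingly verify two things for $\Vc h$: that it is order-reflecting, and that its image is exactly $\{K \in \Vc X \mid (\Vc f)(K) = (\Vc g)(K)\}$.

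The order-reflection is immediate from already-established material: $h$ is a regular monomorphism (being an equalizer), so it is order-reflecting by \cref{l:regmono-in-compord}, and hence $\Vc h$ is order-reflecting by \cref{l:preserve-regmono}. Functoriality of $\Vc$ gives at once $\Vc f \circ \Vc h = \Vc g \circ \Vc h$, so $\ima \Vc h$ is contained in the equalizer of $\Vc f$ and $\Vc g$ at the level of underlying sets. The remaining, and substantive, step is the reverse inclusion: given $K \in \Vc X$ with $\ud f[K] = (\Vc f)(K) = (\Vc g)(K) = \ud g[K]$, I want to exhibit some $L \in \Vc E$ with $(\Vc h)(L) = K$.

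The natural candidate is $L \coloneqq h^{-1}[K]$, which is closed in $E$ by continuity of $h$ and convex because $h$ is order-reflecting. The computation then reads
\[
    (\Vc h)(L) = \ud h[h^{-1}[K]] = \ud(K \cap \ima h) = \ud K = K,
\]
where the third equality is the substantive input, namely \cref{l:equalizer}\eqref{i:eq-ud} applied to the hypothesis $\ud f[K] = \ud g[K]$, and the last equality uses the convexity of $K$. This yields $K \in \ima \Vc h$ and completes the verification. The genuine difficulty has already been absorbed into \cref{l:equalizer}\eqref{i:eq-ud}, whose proof uses the common retraction to build a chain in $K$ whose codirected infimum (existing by \cref{l:exist-dirsup}) is equalized by $f$ and $g$; once that lemma is in hand, the present proposition is a clean assembly of it with \cref{r:equalizer-compord} and \cref{l:preserve-regmono}.
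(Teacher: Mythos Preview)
Your proof is correct and follows essentially the same approach as the paper's: both invoke \cref{l:preserve-regmono} for order-reflection of $\Vc h$, functoriality for $\Vc f \circ \Vc h = \Vc g \circ \Vc h$, and then \cref{l:equalizer}\eqref{i:eq-ud} together with \cref{r:equalizer-compord} to show that $h^{-1}[K]$ is a preimage of $K$ under $\Vc h$. One tiny quibble: the convexity of $h^{-1}[K]$ follows already from $h$ being order-\emph{preserving} (and $K$ convex), not from order-reflection.
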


\begin{proof}
    Let $h \colon E \to X$ be an equalizer of two morphisms $f,g \colon X \rightrightarrows Y$ in $\CompOrd$ with a common retraction, and let us prove that $\Vc h$ is an equalizer of $\Vc f$ and $\Vc g$.
    By \cref{l:preserve-regmono}, $\Vc h$ is a regular monomorphism.
    By functoriality of $\Vc$, we have $\Vc f \circ \Vc h = \Vc (f \circ h) = \Vc (g \circ h) = \Vc g \circ \Vc h$.	
    Let $K \in \Vc X$ be such that $(\Vc f)(K) = (\Vc g)(K)$.
    We have
    \begin{align*}
        K & = \ud K && \text{since $K$ is convex}\\
        & = \ud (K \cap \ima h) && \text{by \cref{l:equalizer}\eqref{i:eq-ud}}\\
        & = \ud h[h^{-1}[K]] && \text{since $K \cap \ima h = h[h^{-1}[K]]$}\\
        & = (\Vc h)(h^{-1}[K]).
    \end{align*}
    Therefore, $K$ belongs to the image of $\Vc h$.
    By \cref{r:equalizer-compord}, $\Vc h$ is the equalizer of $\Vc f$ and $\Vc g$.
\end{proof}

\begin{theorem} \label{t:compord-composition-is-monadic-Vietoris}
    Let $G$ be a comonadic functor from $\CompOrd$ to a category $\cat{C}$.
    The composite $\CoAlg(\Vc) \to \CompOrd \xrightarrow{G} \cat{C}$ of the forgetful functor and $G$ is comonadic. 
\end{theorem}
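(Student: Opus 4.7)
The plan is to mimic exactly the strategy used for \cref{t:composition-is-monadic-Vietoris} in the compact Hausdorff case, since all the analogous ingredients for $\Vc$ have already been set up in this section. Concretely, I would invoke the dual (comonad) version of \cref{t:composition-monadic}, whose hypotheses are precisely (i) that the endofunctor in question is a covarietor, and (ii) that it preserves coreflexive equalizers.

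First, I would recall from \cref{p:compord-monadic} that $\Vc \colon \CompOrd \to \CompOrd$ is a covarietor; this step itself rests on \cref{p:convex preserves-codirected} (preservation of codirected limits, in particular $\omega^\op$-limits) together with \cref{p:chains} and \cref{t:free-algebra-construction}, but I can treat it as a black box here. By the covarietor property, the forgetful functor $U \colon \CoAlg(\Vc) \to \CompOrd$ is comonadic, so the first factor of the composite is already comonadic.

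Next, I would invoke \cref{p:compord-coreflexive-equalizers}, which says that $\Vc$ preserves coreflexive equalizers. This is the crucial preservation property, because it is what allows the dual of \cref{p:composite-of-monads} to assemble the comonad on $\cat{C}$ induced by $G$ with the cofree coalgebra comonad on $\CompOrd$ into a single comonad whose coalgebras match $\CoAlg(\Vc)$ regarded over $\cat{C}$ via $G \circ U$.

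With these two ingredients in place, the dualization of \cref{t:composition-monadic} applies directly: any comonadic $G \colon \CompOrd \to \cat{C}$ composed with the comonadic $U \colon \CoAlg(\Vc) \to \CompOrd$ remains comonadic. I do not anticipate any real obstacle: the hard work has already been done in proving the covarietor property and the preservation of coreflexive equalizers, and this theorem is essentially a one-line application of the general composition result from \cref{s:algebras}. The only thing to take care of is to state the dualization correctly (coalgebras, comonadicity, coreflexive equalizers), which is explicitly flagged after \cref{t:composition-monadic}.
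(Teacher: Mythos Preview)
Your proposal is correct and matches the paper's proof essentially line for line: invoke \cref{p:compord-monadic} for the covarietor property, \cref{p:compord-coreflexive-equalizers} for preservation of coreflexive equalizers, and then apply (the dual of) \cref{t:composition-monadic}. There is nothing to add.
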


\begin{proof}
    The functor $\Vc$ is a covarietor by \cref{p:compord-monadic}, and it preserves coreflexive equalizers by \cref{p:compord-coreflexive-equalizers}.
    By \cref{t:composition-monadic}, the composite $\CoAlg(\Vc ) \to \CompOrd \xrightarrow{G} \cat{C}$ is comonadic.
\end{proof}

\begin{theorem} \label{t:compord-monadicity}
    The opposite $\CoAlg(\Vc )^\op$ of the category of coalgebras for the convex Vietoris functor on compact ordered spaces is monadic over $\Set$.
\end{theorem}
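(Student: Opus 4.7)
The approach is the direct analogue of the proof of Theorem \ref{t:monadicity-vietoris}: exhibit the functor $\CoAlg(\Vc)^\op \to \Set$ as a composition of two monadic functors that satisfies the hypotheses of Theorem \ref{t:compord-composition-is-monadic-Vietoris} (read in its dualised form). All the real work has already been done in Propositions \ref{p:compord-monadic} and \ref{p:compord-coreflexive-equalizers}, which verify the two properties of $\Vc$ needed to apply Theorem \ref{t:composition-monadic}; those two propositions are precisely what Theorem \ref{t:compord-composition-is-monadic-Vietoris} packages.

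The plan is as follows. First I would invoke the result of the first author \cite{Abbadini2019, AbbadiniReggio2020} that $\CompOrd^\op$ is monadic over $\Set$. A concrete witness is the representable functor
\[
\hom_{\CompOrd}(-, [0,1]) \colon \CompOrd^\op \to \Set,
\]
where $[0,1]$ carries its Euclidean topology and its standard linear order, making it a compact ordered space. Dually, this says that the opposite functor $\CompOrd \to \Set^\op$ is comonadic.

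Second, I would apply Theorem \ref{t:compord-composition-is-monadic-Vietoris} with $\cat{C} = \Set^\op$ and $G$ equal to the comonadic functor $\CompOrd \to \Set^\op$ just identified. The theorem then yields that the composite
\[
\CoAlg(\Vc) \longrightarrow \CompOrd \xrightarrow{G} \Set^\op
\]
is comonadic. Passing to opposite categories, the composite
\[
\CoAlg(\Vc)^\op \xrightarrow{U^\op} \CompOrd^\op \longrightarrow \Set
\]
is monadic, which is exactly the statement of the theorem.

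There is essentially no obstacle at this stage: the theorem is a plug-and-play corollary of Theorem \ref{t:compord-composition-is-monadic-Vietoris} and the known monadicity of $\CompOrd^\op$ over $\Set$. The genuine difficulties were absorbed into the preceding sections, namely checking that $\Vc$ preserves codirected limits (via the Bourbaki-criterion, Proposition \ref{p:convex preserves-codirected}) and coreflexive equalizers (Proposition \ref{p:compord-coreflexive-equalizers}); once these are in hand, the monadicity result is immediate.
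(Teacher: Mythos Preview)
Your proposal is correct and follows essentially the same argument as the paper: cite the monadicity of $\hom_{\CompOrd}(-,[0,1])\colon \CompOrd^\op \to \Set$ from \cite{Abbadini2019,AbbadiniReggio2020}, then apply Theorem~\ref{t:compord-composition-is-monadic-Vietoris} to conclude that the composite $\CoAlg(\Vc)^\op \to \CompOrd^\op \to \Set$ is monadic. Your explicit handling of the dualisation (working with $\cat{C}=\Set^\op$ and then passing to opposites) is, if anything, slightly more careful than the paper's terse version.
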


\begin{proof}
    By \cite{Abbadini2019} (see \cite{AbbadiniReggio2020} for a direct proof), the representable functor
    \[
    \hom_\CompOrd(-,[0,1]) \colon \CompOrd^\op \to \Set
    \]
    is monadic.
    Then, by \cref{t:compord-composition-is-monadic-Vietoris}, the composite 
    \[
    \CoAlg(\Vc )^\op \xrightarrow{U^\op} \CH^\op \xrightarrow{G} \Set
    \]
    is monadic.
\end{proof}

\begin{remark} \label{rem:ancoraBarr}
   As for \Cref{rem:cocomBarr},  from the monadicity result in \cref{t:compord-monadicity} it follows that $\CoAlg(\Vc)^\op$ is  (co)complete and Barr-exact.
\end{remark}

\begin{remark}
    From the proof of \cref{t:compord-monadicity} we can extract the description of a monadic functor $\CoAlg(\Vc )^\op \to \Set$, as follows.
    To an object $f \colon X \to \Vc (X)$ of $\CoAlg(\Vc )$ we associate the set $\hom_{\CompOrd}(X, [0,1])$.
    To a morphism $g \colon X_1 \to X_2$ in $\CoAlg(\Vc )$ from $f_1 \colon X_1 \to \Vc (X_1)$ to $f_2 \colon X_2 \to \Vc (X_2)$ we associate the function
    \begin{align*}
        \hom(X_2, [0,1]) & \longrightarrow \hom(X_1, [0,1])\\
        h & \longmapsto h \circ g.
    \end{align*}
\end{remark}


\section{Upper and lower Vietoris on stably compact spaces} \label{s:upper-lower}

In this section, we consider the upper and lower Vietoris constructions, which in domain theory are also known respectively as the Smyth and Hoare powerdomain (both due to Smyth).
The upper and lower Vietoris hyperspaces of a compact Hausdorff space in general are not compact Hausdorff and thus it is convenient to place ourselves in the larger class of stably compact spaces, which is closed under these constructions.

In \cite{HofmannNevesEtAl2018}, using their result that the opposite of the category of stably compact spaces and perfect maps is an $\aleph_1$-ary quasivariety, the authors show that the opposite of the category of coalgebras for the lower Vietoris functor on stably compact spaces and perfect maps is also an $\aleph_1$-ary quasivariety of (infinitary) algebras.
In \cite{Abbadini2019} (see \cite{AbbadiniReggio2020} for a shorter proof) the opposite of the category of stably compact spaces was proved to be in fact an $\aleph_1$-ary \emph{variety}.
In the first author's PhD thesis \cite[Conclusions]{Abbadini2021}, it was observed that a combination of these results could be used to infer that also the opposite of the category of coalgebras for the lower Vietoris functor on stably compact spaces is a(n $\aleph_1$-ary) variety.
In this section, without committing to any particular signature and axioms (but only to a choice of a dualizing object: $[0,1]$), we provide a self-contained proof (\cref{t:stcomp-monadicity}) of the latter fact.

We recall the notion of a stably compact space (see e.g.\ \cite{Lawson2011}), which, following \cite{Jung2004}, is the $T_0$ analogue of the notion of a compact Hausdorff space.
In fact, the theory of stably compact spaces is in many ways analogous to that of compact Hausdorff spaces.

\begin{definition}
    A \emph{stably compact space} is a $T_0$ topological space that is
    \begin{enumerate}
        \item coherent (i.e., the intersection of two compact saturated sets is compact saturated) and compact,
        \item locally compact (given an open $U$ and an element $x \in U$, there are an open $V$ and a compact $K$ such that $x \in V \subseteq K \subseteq U$),
        \item well-filtered (i.e., if the intersection of a filtered family of compact saturated sets is contained in an open $U$, then some member of the family is contained in $U$).
    \end{enumerate}

    A function $f \colon X \to Y$ between stably compact spaces is called \emph{perfect} if it is continuous and the preimage under $f$ of each compact saturated set in $Y$ is compact saturated \cite[Def.~9.4.1]{GoubaultLarrecq2013}. For stably compact spaces, perfect maps coincide with proper maps defined as in \cite[Def.~VI-6.20]{GierzHofmannEtAl2003}, see \cite[Lemma~VI-6.20]{GierzHofmannEtAl2003}. We let $\StComp$ denote the category of stably compact spaces and perfect maps.
\end{definition}

There is a close connection between stably compact spaces and Nachbin's compact ordered spaces.
In fact, the categories $\StComp$ and $\CompOrd$ are isomorphic (concretely over $\Set$), as first illustrated in \cite{GierzHofmannEtAl1980}.
The isomorphism $\CompOrd \to \StComp$ sends a compact ordered space $X$ to the stably compact space with the same underlying set and the topology defined by the open upsets of $X$.
Its inverse functor $\StComp \to \CompOrd$ uses the specialisation order of a topological space, defined by $x \leq y$ if and only if every open set containing $x$ contains $y$.
It maps a stably compact space $(X, \tau)$ to the space with the same set $X$ as underlying set and equipped with the patch topology (i.e.\ the topology generated by the open subsets of $X$ and by the complements of the compact saturated subsets of $X$) and the specialisation order (with respect to the original topology $\tau$). For more details we refer to \cite[Proposition~2.14]{Jung2004} and \cite[Section~VI-6]{GierzHofmannEtAl2003}.

\begin{remark}
    When we consider a partial order on a stably compact space, we assume it to be the specialisation order.
\end{remark}

We recall the lower and upper Vietoris constructions. For more details, see \cite{Schalk1993}.

\begin{definition} \label{d:upper-lower-Vietoris}
    Let $X$ be a stably compact space.
    \begin{enumerate}
        \item 
        The \emph{upper Vietoris hyperspace $\Vu X$ of $X$} is the set of compact saturated subsets of $X$ equipped with the topology generated by the sets\footnote{This is in accordance with the notation used in \cite[Definition~5.2]{Lawson2011} (with the only difference that we include also the empty set), or in \cite{GehrkeGool2022} for spectral spaces.}
        \[
        \Box U \coloneqq \{K \in \Vu X \mid K \subseteq U\} \quad  \quad \text{(}U \text{ open subset of }X\text{)}.
        \]
        The specialization order is reverse inclusion.
        
        \item
        The \emph{lower Vietoris hyperspace $\Vl X$ of $X$} is the set of closed subsets of $X$ equipped with the topology generated by the sets
        \[
        \Diamond U \coloneqq \{K \in \Vu X \mid K \cap U \neq \varnothing\} \quad \quad \text{(}U \text{ open subset of }X\text{)}.
        \]
        The specialization order is inclusion.
    \end{enumerate}
\end{definition}
\begin{definition} \hfill
	\begin{enumerate}
		\item 
		The \emph{upper Vietoris functor} is the functor $\Vu \colon \StComp \to \StComp$ that maps each stably compact space to its upper Vietoris hyperspace, and each morphism $f \colon X \to Y$ of stably compact spaces to the function
	    \begin{align*}
	        \Vu f \colon \Vu X & \longrightarrow \Vu Y\\
	        K & \longmapsto \u f[K],
	    \end{align*}
	    where $\u f[K]$ is the up-closure of $f[K]$ in the specialization order.	    
	    \item
	    The \emph{lower Vietoris functor} is the functor $\Vl \colon \StComp \to \StComp$ that maps each stably compact space to its lower Vietoris hyperspace, and each morphism $f \colon X \to Y$ of stably compact spaces to the function 
	    \begin{align*}
	        \Vl f \colon \Vl X & \longrightarrow \Vl Y\\
	        C & \longmapsto \d f[C],
	    \end{align*}
	    where $\d f[C]$ is the down-closure of $f[C]$ in the specialization order.
    \end{enumerate}
\end{definition}

In \cite[Theorem~4.2]{HofmannNevesEtAl2018} it is shown that the opposite of the category $\CoAlg(\Vl)$ of coalgebras for the Vietoris functor is equivalent to a quasivariety of infinitary algebras (in fact, an $\aleph_1$-ary quasivariety).
In \cref{t:stcomp-monadicity} below we strengthen this result and show that it is in fact an (infinitary) variety, i.e.\ $\CoAlg(\Vl)^\op$ is monadic over $\Set$.
The same result holds for $\Vu$.
To prove it, we first prove that the forgetful functors $\CoAlg(\Vu) \to \StComp$ and $\CoAlg(\Vl) \to \StComp$ are comonadic (\cref{c:comonadic}).

\begin{remark} \label{r:dual-vietoris}
    Let $\dG \colon \StComp \to \StComp$ denote the automorphism of $\StComp$ given by taking the de Groot dual\footnote{Recall that, for compact ordered spaces, the de Groot dual coincides with taking the dual order.}.
    Let $X$ be a stably compact space.
    The following diagram commutes up to natural isomorphisms (cf.~\cite[Theorem~3.1]{GoubaultLarrecq2010}).
    \[
    \begin{tikzcd}[column sep=7em, row sep=huge]
        \StComp \arrow[<->]{r}{\dG} \arrow[swap]{d}{\Vu} & \StComp \arrow{d}{\Vl} \\
        \StComp \arrow[<->, swap]{r}{\dG} & \StComp .
    \end{tikzcd}
    \]
\end{remark}

\begin{proposition}\label{p:upper-codirected-limits}
    Both the upper and the lower Vietoris functor $\Vu, \Vl \colon \StComp \to \StComp$ preserve codirected limits.
\end{proposition}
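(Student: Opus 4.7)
The plan is to reduce the two claims to one via de Groot duality and then mimic the proof of \cref{p:convex preserves-codirected}. By \cref{r:dual-vietoris}, the de Groot functor $\dG$ is a self-inverse isomorphism of $\StComp$ with $\Vl \cong \dG \circ \Vu \circ \dG$; since any isomorphism of categories preserves every limit, it suffices to prove that $\Vu$ preserves codirected limits. I would then transport the problem to $\CompOrd$ via the concrete isomorphism $\StComp \cong \CompOrd$: under this isomorphism the compact saturated subsets of a stably compact space $X$ correspond to the closed upsets of the associated compact ordered space, perfect maps correspond to continuous order-preserving maps, and $\Vu X$ becomes the set of closed upsets of $X$ ordered by reverse inclusion (the specialization order of the upper Vietoris topology) and topologized by the boxes $\Box U$ for $U$ an open upset of $X$. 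That this defines a $\CompOrd$-valued endofunctor can be checked along the lines of \cref{t:Vc-is-compord}, using \cref{l:normal} for Hausdorffness, the Alexander subbase theorem for compactness, and \cref{l:up-set-closed} both for closedness of the order and for the fact that $\u f[K]$ is a closed upset whenever $K$ is.

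Now let $(f_i \colon X \to D(i))_{i \in \cat{I}}$ be a limit cone of a codirected diagram $D \colon \cat{I} \to \CompOrd$, write $d_{ji} \df D(j \to i)$, and apply the Bourbaki criterion (\cref{t:Bourbaki-compord}) to $(\Vu f_i)_{i \in \cat{I}}$. Condition~\eqref{i:separation} is immediate from \cref{i:initial-mono}\eqref{i:upper-inclusion}: for closed upsets $K, L \in \Vu X$ one has $\u K = K$ and $\u L = L$, so $\u f_i[L] \subseteq \u f_i[K]$ for all $i \in \cat{I}$ forces $L \subseteq K$, which is precisely the reversed-inclusion order on $\Vu X$. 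For condition~\eqref{i:images}, pick $K \in \bigcap_{j \to i} \ima \Vu d_{ji}$ and, for each $j \to i$, a closed upset $S_j$ of $D(j)$ with $\u d_{ji}[S_j] = K$. Then $d_{ji}[S_j] \subseteq K \cap \ima d_{ji} \subseteq K$, so $K = \u d_{ji}[S_j] \subseteq \u(K \cap \ima d_{ji}) \subseteq \u K = K$, i.e.\ $K = \u(K \cap \ima d_{ji})$. Intersecting over the codirected set of arrows $j \to i$ and using \cref{l:exchange}\eqref{i:u-cap} together with condition~\eqref{i:images} of \cref{t:Bourbaki-compord} applied to $D$ itself yields
\[
K \;=\; \u \bigcap_{j \to i}(K \cap \ima d_{ji}) \;=\; \u(K \cap \ima f_i) \;=\; \u f_i[f_i^{-1}[K]] \;=\; (\Vu f_i)(f_i^{-1}[K]),
\]
showing $K \in \ima \Vu f_i$.

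The main obstacle I anticipate is bookkeeping around the $\StComp \leftrightarrow \CompOrd$ transport for the Vietoris construction, i.e.\ pinning down the $\CompOrd$-valued functor corresponding to $\Vu$ and verifying it is well-defined there. Once this is in place, the rest is a strict simplification of \cref{p:convex preserves-codirected}, with the Egli-Milner order replaced by reverse inclusion of upsets and the convex closure $\ud$ replaced by the up-closure $\u$.
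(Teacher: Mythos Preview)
Your argument is correct, but it takes a genuinely different route from the paper. The paper's proof is a two-line appeal to external literature: it cites \cite[Corollary~3.33]{HofmannNevesEtAl2019} for the fact that $\Vl$ preserves codirected limits and then invokes \cref{r:dual-vietoris} to transfer this to $\Vu$. You instead give a self-contained argument internal to the paper: you reduce $\Vl$ to $\Vu$ via de Groot duality (the opposite direction), transport $\Vu$ to $\CompOrd$, and then rerun the Bourbaki-criterion proof of \cref{p:convex preserves-codirected} in the simplified setting where $\ud$ is replaced by $\u$ and the Egli--Milner order by reverse inclusion. Your verification of both Bourbaki conditions is correct; in particular, the chain $K = \u d_{ji}[S_j] \subseteq \u(K \cap \ima d_{ji}) \subseteq \u K = K$ and the subsequent application of \cref{l:exchange}\eqref{i:u-cap} go through exactly as you describe, and $f_i^{-1}[K]$ is indeed a closed upset so that $(\Vu f_i)(f_i^{-1}[K])$ is defined. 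The ``obstacle'' you flag---identifying the $\CompOrd$-avatar of $\Vu$---is standard bookkeeping rather than a genuine gap. What your approach buys is independence from the external reference and a demonstration that the paper's own toolkit (\cref{t:Bourbaki-compord}, \cref{i:initial-mono}, \cref{l:exchange}) already suffices; what the paper's approach buys is brevity.
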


\begin{proof}
    By \cite[Corollary~3.33]{HofmannNevesEtAl2019}, $\Vl$ preserves codirected limits.
    By \cref{r:dual-vietoris}, the same holds for $\Vu$.
\end{proof}

\begin{corollary} \label{c:stcomp-monadic}
    The upper and lower Vietoris functors $\Vu, \Vl \colon \StComp \to \StComp$ are covarietors.
\end{corollary}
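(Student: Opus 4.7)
The plan is to mimic exactly the proof strategy used for \cref{c:monadic} and \cref{p:compord-monadic}, since all the heavy lifting has already been done in \cref{p:upper-codirected-limits}. In those earlier cases, once codirected-limit preservation was known, the corresponding covarietor result followed by assembling two general facts from \cref{s:algebras}, dualized from their algebra-for-an-endofunctor formulation to the coalgebra-for-an-endofunctor setting.

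Concretely, I would argue as follows. By \cref{p:upper-codirected-limits}, both $\Vu$ and $\Vl$ preserve codirected limits in $\StComp$; in particular, each of them preserves $\omega^{\op}$-limits, i.e.\ limits of $\omega$-cochains. Dualizing \cref{p:chains}, this implies that the cofree-coalgebra construction (the dual of Ad\'amek's transfinite chain) stops after $\omega$ steps for both $\Vu$ and $\Vl$. Dualizing \cref{t:free-algebra-construction}, the stopped object then provides a cofree $\Vu$-coalgebra (resp.\ $\Vl$-coalgebra) on every object of $\StComp$, which means exactly that the forgetful functors $\CoAlg(\Vu) \to \StComp$ and $\CoAlg(\Vl) \to \StComp$ admit right adjoints. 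By definition this is the statement that $\Vu$ and $\Vl$ are covarietors.

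I do not anticipate any real obstacle: the statement is a direct corollary of \cref{p:upper-codirected-limits} combined with the general categorical machinery of \cref{s:algebras}, and the same two-line argument appears verbatim (modulo dualization) in the proofs of \cref{c:monadic} and \cref{p:compord-monadic}. The only point to be slightly careful about is that \cref{p:chains} and \cref{t:free-algebra-construction} are stated for algebras and endofunctors on a cocomplete category, so one must invoke their formal duals and use that $\StComp$ has all codirected limits (indeed all small limits, as $\StComp \cong \CompOrd$ is complete), which is what makes the dualized construction available.
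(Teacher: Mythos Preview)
Your proposal is correct and follows exactly the paper's own proof: it invokes \cref{p:upper-codirected-limits} to get preservation of $\omega^\op$-limits and then applies (the duals of) \cref{p:chains} and \cref{t:free-algebra-construction} to conclude that $\Vu$ and $\Vl$ are covarietors.
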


\begin{proof}
    By \cref{p:upper-codirected-limits}, the functors $\Vu$ and $\Vl$ preserve $\omega^\op$-limits. Thus, by \cref{p:chains,t:free-algebra-construction}, they are covarietors.
\end{proof}

\begin{corollary} \label{c:comonadic}
    The forgetful functors 
    \[
    \CoAlg(\Vu) \to \StComp \quad\text{ and }\quad \CoAlg(\Vl) \to \StComp
    \]
    are comonadic.
\end{corollary}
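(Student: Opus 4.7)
The plan is to observe that this corollary is a direct formal consequence of \cref{c:stcomp-monadic} together with the dualized version of \cref{t:varietor-monadic}. Recall from the first section that, by definition, a covarietor is an endofunctor $F$ on $\cat{C}$ such that the forgetful functor $\CoAlg(F) \to \cat{C}$ is a left adjoint, and the dual of \cref{t:varietor-monadic} guarantees that any such left adjoint is automatically comonadic. Thus ``covarietor'' is essentially a synonym for ``yields a comonadic forgetful functor'' in a cocomplete setting.

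The steps are therefore short. First, I would invoke \cref{c:stcomp-monadic}, which asserts that both $\Vu$ and $\Vl$ are covarietors, itself obtained from \cref{p:upper-codirected-limits} (preservation of codirected limits, dually of $\omega^{\op}$-limits) via the dualized free-coalgebra construction in \cref{p:chains,t:free-algebra-construction}. Second, I would apply the dual of \cref{t:varietor-monadic} to conclude that each forgetful functor $\CoAlg(\Vu) \to \StComp$ and $\CoAlg(\Vl) \to \StComp$, being a left adjoint, is comonadic.

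There is no real obstacle here; the whole content has already been packaged into the notion of covarietor. The only mild subtlety worth noting is that all the categorical machinery of Section~1 was stated for algebras, varietors and monadicity, while we are applying it to coalgebras, covarietors and comonadicity. This dualization is explicitly sanctioned by the remark following the definition of $\Alg(F)$, so no extra argument is required.
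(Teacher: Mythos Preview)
Your proposal is correct and matches the paper's approach: the paper gives no explicit proof of this corollary at all, treating it as immediate from \cref{c:stcomp-monadic} together with the definition of covarietor and the dual of \cref{t:varietor-monadic}, exactly as you outline.
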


To prove that $\CoAlg(\Vu)^\op$ (and similarly for $\Vl$) is monadic over $\Set$, we prove that the composite of the two monadic functors 
\[
\CoAlg(\Vu)^\op \to \StComp^\op \xrightarrow{\hom(-,[0,1])} \Set
\]
is monadic.
To prove so, in light of \cref{t:composition-monadic}, it is enough to prove that $\Vu$ preserves coreflexive equalizers. We will achieve this result in \cref{p:upper-compord-codirected-limits}, but we will need some preliminary lemmas before.

\begin{lemma}[{\cite[Theorem 2.6]{HofmannNevesEtAl2018}}] \label{l:regular-mono-char}
    The regular monomorphisms in $\StComp$ are precisely the order-reflecting morphisms.
\end{lemma}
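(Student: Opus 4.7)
The plan is to reduce the claim to the analogous statement for compact ordered spaces, namely \cref{l:regmono-in-compord}, by leveraging the concrete isomorphism of categories $\StComp \cong \CompOrd$ recalled in this section. Since this is how the result is most naturally obtained, and since the direct topological argument largely duplicates the proof already given in the ordered setting, routing through the equivalence is both the shortest and the most informative route.

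First, I would verify that under the functor $\CompOrd \to \StComp$, the partial order of a compact ordered space corresponds precisely to the specialization order of the associated stably compact space. This is immediate from the construction: a compact ordered space $(X, \tau, \leq)$ is sent to $X$ equipped with the topology $\sigma$ of open upsets of $\tau$; for $x, y \in X$, one then has $x \leq y$ iff every open upset containing $x$ contains $y$, which is exactly the specialization order of $(X, \sigma)$. Consequently, a morphism $f \colon X \to Y$ of compact ordered spaces is order-reflecting iff the corresponding perfect map in $\StComp$ is order-reflecting with respect to the specialization orders.

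Second, since the isomorphism $\CompOrd \cong \StComp$ is an isomorphism of categories (concrete over $\Set$), it preserves and reflects all limits, and in particular equalizers; consequently, a morphism in $\StComp$ is a regular monomorphism iff the corresponding morphism in $\CompOrd$ is one. Combining this with the previous paragraph and applying \cref{l:regmono-in-compord} yields the claim.

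The main obstacle, were one to attempt a direct proof avoiding the equivalence, would be exhibiting an arbitrary order-reflecting perfect map $f \colon X \to Y$ as an equalizer: one would need to construct a stably compact space $Z$ and perfect maps $g, h \colon Y \rightrightarrows Z$ whose equalizer is precisely the image of $f$. This requires enough $[0,1]$-valued separating perfect maps to distinguish, on $Y$, pairs of points that are not comparable in the specialization order and fail to lie in the image of $f$. In $\CompOrd$ such separating maps are supplied by Nachbin's normality theorem (cf.\ \cref{l:normal}), so the ordered picture makes the construction transparent, whereas working purely in $\StComp$ with patch topologies and saturated sets would obscure the argument.
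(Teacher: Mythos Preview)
The paper does not prove this lemma: it simply cites \cite[Theorem~2.6]{HofmannNevesEtAl2018}, exactly as it does for \cref{l:regmono-in-compord}. Your reduction via the concrete isomorphism $\CompOrd \cong \StComp$ is correct and makes explicit what the paper leaves implicit, namely that the two lemmas are one and the same result viewed through that isomorphism; both point to the same theorem in the cited reference.
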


\begin{proposition} \label{p:upper-preserve-regmono}
    The upper and lower Vietoris functors $\Vu, \Vl$ preserve regular monomorphisms.
\end{proposition}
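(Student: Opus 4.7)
The plan is to reduce the claim to \cref{l:reg-mono} via \cref{l:regular-mono-char}. Since the regular monomorphisms in $\StComp$ are exactly the order-reflecting morphisms (with respect to the specialisation order), it suffices to show that if a perfect map $f \colon X \to Y$ is order-reflecting, then so are $\Vu f$ and $\Vl f$. The main observation is then elementary: compact saturated subsets of $X$ coincide with their own up-closure in the specialisation order, while closed subsets of $X$ coincide with their own down-closure; this lets me feed $\V u f$ and $\Vl f$ directly into the order-theoretic statements of \cref{l:reg-mono}.

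For $\Vu$, I would fix $K, L \in \Vu X$ and unfold the specialisation order on $\Vu X$, which is reverse inclusion: $(\Vu f)(K) \leq (\Vu f)(L)$ says $\u f[L] \subseteq \u f[K]$. Since $K$ and $L$ are saturated, $\u K = K$ and $\u L = L$, so viewing $f$ as an order-reflecting map between the underlying posets of $X$ and $Y$ and applying \cref{l:reg-mono}\eqref{i:reg-mono-ks}, I obtain $\u L \subseteq \u K$, i.e.\ $L \subseteq K$, which is exactly $K \leq L$ in $\Vu X$. Hence $\Vu f$ is order-reflecting and therefore a regular monomorphism by \cref{l:regular-mono-char}.

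For $\Vl$ the argument is symmetric: the specialisation order on $\Vl X$ is ordinary inclusion, every $C \in \Vl X$ satisfies $\d C = C$, and $(\Vl f)(C) \leq (\Vl f)(D)$ reads $\d f[C] \subseteq \d f[D]$; \cref{l:reg-mono}\eqref{i:reg-mono-c} then gives $C = \d C \subseteq \d D = D$. Alternatively, one may transport the upper case to the lower case via the de Groot involution of \cref{r:dual-vietoris}, since $\dG$ reverses the specialisation order on objects and thus preserves order-reflection of morphisms. There is no substantive obstacle; the one bookkeeping item is to keep straight that $\Vu X$ carries reverse inclusion while $\Vl X$ carries inclusion, and to invoke the right half of \cref{l:reg-mono} accordingly.
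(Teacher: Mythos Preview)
Your proposal is correct and follows essentially the same route as the paper: reduce via \cref{l:regular-mono-char} to showing order-reflection, unfold the specialisation order on $\Vu X$ (reverse inclusion) and $\Vl X$ (inclusion), and apply the relevant part of \cref{l:reg-mono}. The paper's proof is slightly terser---it does not spell out $\u K = K$ for saturated $K$ and simply says the $\Vl$ case is analogous---but the argument is the same.
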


\begin{proof}
    Let $f \colon X \to Y$ be a regular monomorphism in $\StComp$, and let us prove that $\Vu f$ is order-reflecting.
    Let $K, L \in \Vu X$ with $(\Vu f) (K) \leq (\Vu f)(L)$, i.e.\ $\u f[K] \supseteq \u f[L]$.
    By \cref{l:regular-mono-char}, $f$ is order-reflecting.
    Then, by \cref{l:reg-mono}, $K \supseteq L$, i.e.\ $K \leq L$.
    Thus, $\Vu f$ is order-reflecting.
    By \cref{l:regular-mono-char}, $\Vu f$ is a regular monomorphism.
    The proof for $\Vl$ is analogous.
\end{proof}

\begin{remark} \label{r:equalizer-stcomp}
    By \cref{r:equalizer-compord}, a morphism $h \colon Z \to X$ in $\StComp$ is the equalizer of $f, g \colon X \rightrightarrows Y$ if and only if (i) an element $x \in X$ belongs to the image of $h$ if and only if $f(x) = g(x)$, and (ii) $h$ is order-reflecting.
\end{remark}

\begin{proposition} \label{p:upper-compord-codirected-limits}
    The upper and lower Vietoris functors $\Vu, \Vl \colon \StComp \to \StComp$ preserve coreflexive equalizers.
\end{proposition}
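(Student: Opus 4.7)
The plan is to mirror the proof of Proposition~\ref{p:compord-coreflexive-equalizers}, exploiting the isomorphism of categories $\StComp \cong \CompOrd$ recalled above. Fix a coreflexive equalizer $h \colon E \to X$ of $f, g \colon X \rightrightarrows Y$ in $\StComp$ with a common retraction $k \colon Y \to X$. By Proposition~\ref{p:upper-preserve-regmono}, both $\Vu h$ and $\Vl h$ are regular monomorphisms, and functoriality gives $\Vu f \circ \Vu h = \Vu g \circ \Vu h$ and similarly for $\Vl$. In view of Remark~\ref{r:equalizer-stcomp}, what remains is to show that if $(\Vu f)(K) = (\Vu g)(K)$ for some $K \in \Vu X$, then $K$ lies in the image of $\Vu h$, and analogously for $\Vl$.

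For the upper case, note that any $K \in \Vu X$ is saturated, hence an upset in the specialization order, so $K = \u K$. The hypothesis $(\Vu f)(K) = (\Vu g)(K)$ unfolds to $\u f[K] = \u g[K]$. Transporting to the associated compact ordered space (where $h$ remains a coreflexive equalizer with the same common retraction, $K$ is a closed upset, and $\u,\d$ refer to the specialization order), Lemma~\ref{l:equalizer}\eqref{i:eq-ks} gives $\u K = \u(K \cap \ima h)$. Since $K = \u K$ and $h$ is perfect (so $h^{-1}[K] \in \Vu E$), we conclude
\[
K \;=\; \u(K \cap \ima h) \;=\; \u h\bigl[h^{-1}[K]\bigr] \;=\; (\Vu h)\bigl(h^{-1}[K]\bigr),
\]
which places $K$ in the image of $\Vu h$. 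The lower case is entirely dual: each $C \in \Vl X$ is a downset (closed sets in a stably compact space are downsets in the specialization order), so $C = \d C$, and Lemma~\ref{l:equalizer}\eqref{i:eq-c} combined with the dual computation produces $C = (\Vl h)(h^{-1}[C])$. Alternatively, the $\Vl$-case can be obtained from the $\Vu$-case by conjugating with the de~Groot duality of Remark~\ref{r:dual-vietoris}.

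I do not anticipate any substantive obstacle: the one step deserving a touch of care is the transfer of hypotheses between $\StComp$ and $\CompOrd$, i.e.\ confirming that closedness/saturatedness and the equalizer structure (together with the common retraction) are preserved under the isomorphism $\StComp \cong \CompOrd$ so that Lemma~\ref{l:equalizer} applies verbatim. Once this bookkeeping is in place, the rest is a routine application of the machinery already assembled.
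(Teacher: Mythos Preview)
Your proposal is correct and follows essentially the same approach as the paper: apply Proposition~\ref{p:upper-preserve-regmono} for the regular mono, then use Lemma~\ref{l:equalizer}\eqref{i:eq-ks} (resp.\ \eqref{i:eq-c}) to show that any $K$ equalized by $\Vu f, \Vu g$ (resp.\ $\Vl f, \Vl g$) lies in the image, and conclude via Remark~\ref{r:equalizer-stcomp}. The only cosmetic difference is that the paper invokes Remark~\ref{r:dual-vietoris} at the outset to reduce to $\Vu$ alone, whereas you treat both cases (and mention the de~Groot reduction as an alternative); your added remark about the $\StComp \cong \CompOrd$ transfer is a welcome bit of explicitness that the paper leaves implicit.
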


\begin{proof}
    By \cref{r:dual-vietoris}, it is enough to prove it for the upper Vietoris functor.
    Let $h \colon E \to X$ be an equalizer of two morphisms $f,g \colon X \rightrightarrows Y$ in $\StComp$ with a common retraction, and let us prove that $\Vu h$ is an equalizer of $\Vu f$ and $\Vu g$.
    By \cref{p:upper-preserve-regmono}, $\Vu h$ is a regular monomorphism.
    By functoriality of $\Vu$, we have
    \[
    \Vu f \circ \Vu h = \Vu(f \circ h) = \Vu (g \circ h) = \Vu g \circ \Vu h.
    \]
    Let $K \in \Vu X$ be such that $(\Vu f)(K) = (\Vu g)(K)$.
    We have
    \begin{align*}
        K & = \u K && \text{since $A$ is upward-closed}\\
        & = \u (K \cap \ima h) && \text{by \cref{l:equalizer}\eqref{i:eq-ks}}\\
        & = \u(h[h^{-1}[K]]) && \text{since $K \cap \ima h = h[h^{-1}[K]]$}\\
        & = (\Vu h)(h^{-1}[K]).
    \end{align*}
    Therefore, $K$ belongs to the image of $\Vu h$.
    By \cref{r:equalizer-stcomp}, $\Vu h$ is the equalizer of $\Vu f$ and $\Vu g$.
\end{proof}

\begin{theorem} \label{t:stcomp-composition-is-monadic-Vietoris}
    Let $G$ be a comonadic functor from $\StComp$ to a category $\cat{C}$.
    The composites 
    \[
    \CoAlg(\Vu) \xrightarrow{U} \StComp \xrightarrow{G} \cat{C}\quad\text{ and }\quad\CoAlg(\Vl) \xrightarrow{U} \StComp \xrightarrow{G} \cat{C}
    \]
    are comonadic.
\end{theorem}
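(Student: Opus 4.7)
The plan is to mirror the proof strategy used for the two earlier composition theorems (\cref{t:composition-is-monadic-Vietoris} and \cref{t:compord-composition-is-monadic-Vietoris}), which in turn instantiate the general principle of \cref{t:composition-monadic} in its dualized form. Since all the technical preparation has already been carried out in the preceding lemmas and propositions, the proof here is essentially a direct plug-in.

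Concretely, I would proceed as follows. First, recall from \cref{c:stcomp-monadic} that both $\Vu$ and $\Vl$ are covarietors on $\StComp$, so that the forgetful functors $\CoAlg(\Vu) \to \StComp$ and $\CoAlg(\Vl) \to \StComp$ are comonadic (\cref{c:comonadic}). Second, recall from \cref{p:upper-compord-codirected-limits} that $\Vu$ and $\Vl$ preserve coreflexive equalizers. These are precisely the two hypotheses needed to invoke the comonad version of \cref{t:composition-monadic}: a covarietor that preserves coreflexive equalizers can be composed with any comonadic functor out of the base category and the composite remains comonadic. Applying this to $G \colon \StComp \to \cat{C}$ yields the comonadicity of the two composites.

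The only subtlety worth flagging is the orientation: \cref{t:composition-monadic} is stated for varietors and monadic functors, but the end of Section~\ref{s:algebras} explicitly records that the analogous statement holds in the comonadic/coalgebraic setting by dualization. This dual form is exactly what was used in \cref{t:composition-is-monadic-Vietoris} and \cref{t:compord-composition-is-monadic-Vietoris}, so there is no new issue here. I do not expect any genuine obstacle: all the work was done in \cref{p:upper-codirected-limits} (which underwrites covarietorness via \cref{p:chains,t:free-algebra-construction}) and in \cref{p:upper-compord-codirected-limits} (preservation of coreflexive equalizers, which for $\Vl$ follows by de Groot duality from the case of $\Vu$ via \cref{r:dual-vietoris}). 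The proof is therefore a one-paragraph citation, and it should be written in parallel for both $\Vu$ and $\Vl$ at once.
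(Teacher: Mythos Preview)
Your proposal is correct and matches the paper's proof essentially verbatim: the paper invokes \cref{c:stcomp-monadic} (covarietor), \cref{p:upper-compord-codirected-limits} (coreflexive equalizers), and then \cref{t:composition-monadic}. The only cosmetic difference is that the paper first reduces to $\Vu$ via \cref{r:dual-vietoris} rather than writing both cases in parallel, which you also mention as an option.
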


\begin{proof}
    By \cref{r:dual-vietoris}, it is enough to prove the statement for $\Vu$.
    By \cref{c:stcomp-monadic}, $\Vu$ is a covarietor.
    By \cref{p:upper-compord-codirected-limits}, $\Vu$ preserves coreflexive equalizers.
    Then, by \cref{t:composition-monadic}, the composite below is comonadic.
    \[
    \CoAlg(\Vu) \xrightarrow{U} \StComp \xrightarrow{G} \cat{C} \qedhere
    \]
\end{proof}

\begin{notation} \label{n:01up}
    We let $[0,1]^\uparrow$ (resp.\ $[0,1]^\downarrow$) denote the stably compact space whose underlying set is the unit interval $[0,1]$ and whose topology is the set of upsets (resp.\ downsets) of $[0,1]$ that are open in the Euclidean topology.
\end{notation}

\begin{theorem} \label{t:stcomp-monadicity}
    The opposites $\CoAlg(\Vu )^\op$ and $\CoAlg(\Vl)^\op$ of the categories of coalgebras for the upper and lower Vietoris functors are monadic over $\Set$.
\end{theorem}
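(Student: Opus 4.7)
The plan is to mirror the proof of \cref{t:monadicity-vietoris} and \cref{t:compord-monadicity}: exhibit a monadic functor $\StComp^\op \to \Set$ of the form $\hom_\StComp(-,D)$ for a suitable dualizing object $D$, and then invoke \cref{t:stcomp-composition-is-monadic-Vietoris} to conclude that its composition with $U^\op \colon \CoAlg(\Vu)^\op \to \StComp^\op$ (and, respectively, with $\CoAlg(\Vl)^\op \to \StComp^\op$) is monadic.

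The candidate for $D$ comes from \cref{n:01up}: I would take $D = [0,1]^\u$ for the upper case. The key observation is the well-known concrete isomorphism $\StComp \cong \CompOrd$ recalled just before \cref{d:upper-lower-Vietoris}; under this isomorphism, the compact ordered space $[0,1]$ (with its usual order and Euclidean topology) corresponds precisely to the stably compact space $[0,1]^\u$, because the open upsets of the compact ordered space $[0,1]$ form exactly the topology defining $[0,1]^\u$. Consequently, the hom-functor $\hom_\StComp(-,[0,1]^\u)$ is carried by this isomorphism to $\hom_\CompOrd(-,[0,1])$, which is monadic by \cite{Abbadini2019,AbbadiniReggio2020} (and was already used in the proof of \cref{t:compord-monadicity}). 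Hence $\hom_\StComp(-,[0,1]^\u) \colon \StComp^\op \to \Set$ is monadic as well.

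With this in hand, \cref{t:stcomp-composition-is-monadic-Vietoris} immediately yields that the composite
\[
\CoAlg(\Vu)^\op \xrightarrow{U^\op} \StComp^\op \xrightarrow{\hom_\StComp(-,[0,1]^\u)} \Set
\]
is monadic, proving the claim for $\Vu$. For $\Vl$ I would run the symmetric argument, using $D = [0,1]^\d$ as the dualizing object; equivalently, one can invoke \cref{r:dual-vietoris} to transport the statement for $\Vu$ to $\Vl$ via the de Groot duality automorphism $\dG$ of $\StComp$, which sends $[0,1]^\u$ to $[0,1]^\d$ and intertwines $\Vu$ with $\Vl$, hence identifies the two monadicity statements.

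I do not expect any serious obstacle: all the hard work has already been accumulated in \cref{c:stcomp-monadic} (covarietor property), \cref{p:upper-compord-codirected-limits} (preservation of coreflexive equalizers), and \cref{t:stcomp-composition-is-monadic-Vietoris}. The only point requiring a sentence of justification is the monadicity of $\hom_\StComp(-,[0,1]^\u)$, which is a direct transport along the isomorphism $\StComp \cong \CompOrd$ of a result already cited in the compact-ordered setting.
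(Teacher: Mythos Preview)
Your proposal is correct and follows essentially the same route as the paper: invoke the monadicity of $\hom_\StComp(-,[0,1]^\uparrow)$ and then apply \cref{t:stcomp-composition-is-monadic-Vietoris}. The only cosmetic difference is that the paper cites \cite{Abbadini2019,AbbadiniReggio2020} directly for the monadicity of $\hom_\StComp(-,[0,1]^\uparrow)$, whereas you obtain it by transporting the $\CompOrd$ result along the isomorphism $\StComp \cong \CompOrd$---which is of course the same fact; your explicit handling of the $\Vl$ case via de Groot duality is also fine and matches the spirit of \cref{r:dual-vietoris}.
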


\begin{proof}
    By \cite{Abbadini2019} (see \cite{AbbadiniReggio2020} for a shorter proof) the representable functor
    \[
    \hom_\StComp(-,[0,1]^\uparrow) \colon \StComp^\op \to \Set
    \]
    is monadic.  Then, by \cref{t:stcomp-composition-is-monadic-Vietoris}, the composite below is monadic.
    \[
    \CoAlg(\Vu )^\op \xrightarrow{U^\op} \CH^\op \xrightarrow{G} \Set \qedhere
    \]
\end{proof}

\begin{remark}
   Similarly to \Cref{rem:ancoraBarr} (and \Cref{rem:cocomBarr}), from the monadicity result in \cref{t:stcomp-monadicity} we obtain that the categories $\CoAlg(\Vu)^\op$ and $\CoAlg(\Vl)^\op$, are (co)complete and Barr-exact.
\end{remark}

\begin{remark}
    We strengthened the results in \cite{HofmannNevesEtAl2018} stating that the opposite of $\CoAlg(\Vl)$ is equivalent to an $\aleph_1$-ary quasivariety: this $\aleph_1$-ary quasivariety is a variety, and, in fact, an $\aleph_1$-ary \emph{variety}.
    Indeed, any quasivariety that is equivalent to a variety is itself a variety (since the only missing property is the effectiveness of equivalence relations, which is a categorical property that does not depend on the axiomatization).
\end{remark}

\begin{remark}
    The proof of \cref{t:stcomp-monadicity} provides an example of a monadic functor from $\CoAlg(\Vu )^\op$ to $\Set$, as follows.
    To an object $f \colon X \to \Vu X$ of $\CoAlg(\Vu )$ we associate the set $\hom_{\StComp}(X, [0,1]^\uparrow)$.
    To a morphism $g \colon X_1 \to X_2$ in $\CoAlg(\Vu )$ from $f_1 \colon X_1 \to \Vu X_1$ to $f_2 \colon X_2 \to \Vu X_2$ we associate the function
    \begin{align*}
        \hom_{\StComp}(X_2, [0,1]^\uparrow) & \longrightarrow \hom_{\StComp}(X_1, [0,1]^\uparrow)\\
        h & \longmapsto h \circ g.
    \end{align*}
    Analogous considerations hold for $\Vl$.
\end{remark}

\section{Generating the algebraic theories}

The general theory initiated by Lawvere and Linton offers a \textit{standard} procedure to axiomatize monadic categories (see \cite{manes2012algebraic} or \cite[Sec. 3.6]{manes2003monads}). In this section, 
\begin{itemize}
    \item
    we obtain an equational axiomatization of $\CoAlg(\V)^\op$ by adding to an axiomatization of $\CH^\op$ the unary operator $\Box$ (or, equivalently, $\Diamond$) and appropriate axioms;
    
    \item 
    we obtain an equational axiomatization of $\CoAlg(\Vc)^\op$ by adding to an axiomatization of $\CompOrd^\op$ the unary operators $\Box$ and $\Diamond$ and appropriate axioms;
    
    \item 
    we obtain an equational axiomatization of $\CoAlg(\Vu)^\op$ by adding to an axiomatization of $\StComp^\op$ the unary operator $\Box$ and appropriate axioms;
    
    \item
    we obtain an equational axiomatization of $\CoAlg(\Vl)^\op$ by adding to an axiomatization of $\StComp^\op$ the unary operator $\Diamond$ and appropriate axioms.
\end{itemize}

The algebras in a variety $\mathcal{W}$ can be viewed as algebras for a monad $F \colon \Set \to \Set$ that maps a set $X$ to the underlying set of the free $\mathcal{W}$-algebra over $X$.
Then, an algebra $A$ in $\mathcal{W}$ is given by a function $F(A) \to A$ which satisfies certain properties of compatibility with the monad identity and multiplication.

In special cases, this description gets simplified in the sense that the variety $\mathcal{W}$ is isomorphic to the category of algebras for an endofunctor $T \colon \Set \to \Set$ (which generally differs from the endofunctor on $\Set$ describing the monad); no monad units and multiplications are needed in these cases.

\begin{example}[{\cite[Example, p.~114, Section~III.3]{AdamekTrnkova1990}}] \label{ex:commutative}
    Consider the variety $\mathcal{Z}$ of commutative magmas, whose language consists of a single binary operation $+$ and whose single axiom is commutativity.
    For every $X$, let $TX$ be the set of subsets of $X$ of cardinality $1$ or $2$.
    Then, an algebra in $\mathcal{Z}$ can be identified with a function from $TX$ to $X$, as follows.
    To an algebra $A$ in $\mathcal{Z}$ one associates the function $TA \to A$ that maps a set $\{x,y\}$ (with $x$ and $y$ possibly coinciding) to $x + y$.
    Note that commutativity ensures that this function is well-defined.
    The assignment $T$ can be made into an endofunctor on $\Set$ by mapping a function $f \colon X \to Y$ to the function that maps a set $\{x,y\}$ (with $x$ and $y$ possibly coinciding) to its image $f[\{x,y\}]$ under $f$.
    A homomorphism $f \colon A \to B$ between algebras in $\mathcal{Z}$ corresponds to a function $g \colon A \to B$ making the following diagram commuting.
    \[
    \begin{tikzcd}
        TA \arrow{r}{Tg} \arrow{d}{} & TB \arrow{d}\\
        A \arrow{r}{g} & B
    \end{tikzcd}
    \]
    
    Notice that commutativity 
    \[
    x + y = y + x
    \]
    is an equation of pure rank 1, i.e.\ every variable is under exactly one operation symbol.
    
    The set $TX$ can then be identified with the (classes of equivalence of) terms of rank $1$ in variables from $X$ (i.e.\ the terms of type $x + y$ for $x,y \in X$).
    Notice that $T$ is different from the functor $T^* \colon \Set \to \Set$ that is part of the monad describing the variety $\mathcal{Z}$.
    The functor $T^*$ is the algebraically-free monad, which maps a set $X$ to the set of equivalence classes of terms in variables from $X$; $T^*X$ contains elements of the form $(x+y) + z$ (of rank $2$) or variables $x$ (of rank $0$), which are not present in $TX$.
\end{example}

In fact, if a variety $\mathcal{Z}$ is axiomatized by pure rank $1$ axioms, then $\mathcal{Z}$ is isomorphic to the category of algebras for an endofunctor on $\Set$ (namely, the functor that sends $X$ to the set of equivalence classes of terms in variables from $X$ of pure rank $1$), cf.\ \cite[Corollary, p.~141, Section~III.4.9]{AdamekTrnkova1990}, \cite[Section~5.A]{BalanKurzEtAl2015}.
More generally, if a variety $\mathcal{Z}$ is obtained from a variety $\mathcal{W}$ (in \cref{ex:commutative}, $\mathcal{W} = \Set$) by adding operations axiomatized by pure rank $1$ axioms, then we can view $\mathcal{Z}$-algebras as algebras for an endofunctor on $\mathcal{W}$.

\begin{example}
    The variety $\MA$ of modal algebras is obtained from the variety $\BA$ of Boolean algebras by adding a unary function symbol $\Box$ axiomatized by pure rank $1$ equations, namely
    \[
        \Box(x \land y) = \Box x \land \Box y \quad \text{ and }\quad \Box 1 = 1.
    \]
    The category $\MA$ is isomorphic to the category of algebras for an endofunctor on $\BA$ that maps a Boolean algebra to the free Boolean algebra over its underlying meet-semilattice \cite[Proposition~3.12]{KupkeKurzEtAl2004}.
\end{example}

\begin{example}
    The variety of positive modal algebras is obtained from the variety of bounded distributive lattices by adding unary function symbols $\Box$ and $\Diamond$ axiomatized by pure rank $1$ equations, namely
    \begin{enumerate}
        \item 
        $\Box(x \land y) = \Box x \land \Box y$,
        
        \item
        $\Box 1 = 1$,
        
        \item
        $\Diamond (x \lor y) = \Diamond x \lor \Diamond y$,
        
        \item
        $\Diamond 0 = 0$,
        
        \item
        $\Box x \land \Diamond y = \Box x \land \Diamond (x \land y)$,
        
        \item
        $\Diamond x \lor \Box y = \Diamond x \lor \Box(x \lor y)$.
    \end{enumerate}
    In fact, the variety of positive modal algebras can be seen as the category of algebras for an endofunctor on the category of bounded distributive lattices and homomorphisms.
\end{example}

\begin{remark} \label{r:presentation-rank}
    Suppose $G \colon \mathcal{W} \to \Set$ is a monadic functor, with left adjoint $F$, and $(GF, \eta, \mu)$ the corresponding monad.
    For a function $f \colon X \to GF(Y)$, let $\overline{f} \colon F(X) \to F(Y)$ denote the unique morphism such that $\overline{f} \circ \eta_X = f$ (whose existence is guaranteed by the fact that $\eta$ is a unit); in other words, $\overline{f} = \mu_T \circ GF(f)$; in more algebraic terms, $\overline{f}$ is the extension of $f$ from the set $X$ of free generators to the free algebra $F(X)$.
    Suppose $T \colon \mathcal{W} \to \mathcal{W}$ is an endofunctor such that the forgetful functor $\Alg(T) \to \mathcal{W}$ is monadic.
    Then the algebraic theory of $\Alg(T)$ is determined by the following information.

    Terms:
    \begin{enumerate}
        \item
        The set of (equivalence classes of) terms of rank $0$ of arity $X$ is $GF(X)$.
        
        \item 
        The set of (equivalence classes of) terms of rank $1$ of arity $X$ is $GTF(X)$.
    \end{enumerate}
    
    Composition of terms (denoted with $*$ to distinguish it from the composition of functions, denoted with $\circ$):
    \begin{enumerate}
        \item 
        The composite $\tau * \rho$ of a term $\tau \in GF(X)$ of rank $0$ and arity $X$ and an $X$-tuple $\rho \colon X \to GF(Y)$ of terms of rank $0$ and arity $Y$ is $G(\overline{\rho}) (\tau) \in GF(Y)$.
        
        \item 
        Composition of terms of rank $0$ and rank $1$ is as follows.
        \begin{enumerate}
            \item 
            The term-composition $\tau * \rho$ of a term $\tau \in GF(X)$ of rank $0$ and arity $X$ and an $X$-tuple $\rho \colon X \to GTF(Y)$ of terms of rank $1$ and arity $Y$ is $G(\overline{\rho})(\tau) \in GTF(Y)$.
            
            \item 
            The term-composition $\tau * \rho$ of a term $\tau \in GTF(X)$ of rank $1$ and arity $X$ and an $X$-tuple $\rho \colon X \to GF(Y)$ of terms of rank $0$ and arity $Y$ is $GT(\overline{\rho})(\tau) \in GTF(Y)$.
        \end{enumerate}
    \end{enumerate}
\end{remark}

\subsection{Vietoris on compact Hausdorff spaces}
The monadic functor 
\[
\CoAlg(\V)^\op \to \Set
\]
given by the proof of \cref{t:stcomp-monadicity} maps an object $f \colon X \to \V (X)$ to the set $\hom_{\CH}(X, [0,1])$, and maps a morphism $g \colon X_1 \to X_2$ from $f_1 \colon X_1 \to \V (X_1)$ to $f_2 \colon X_2 \to \V (X_2)$ to the precomposition by $g$:
\begin{align*}
    \hom_\CH(X_2, [0,1]) & \longrightarrow \hom_\CH(X_1, [0,1])\\
    h & \longmapsto h \circ g.
\end{align*}
In what follows, we give a (quite abstract) description of the algebraic theory of the variety associated to this monadic functor.

\begin{notation}
    Given a family $(f_i \colon X \to Y_i)_{i \in I}$ of functions, we let $\langle f_i \rangle_{i \in I}$ denote the obvious function $ X \to \prod_{i \in I}Y_i$.
\end{notation}

\begin{remark} \label{r:a-description}
    The left adjoint to Duskin's monadic functor $\hom(-, [0,1]) \colon \CH^\op \to \Set$ is the functor that maps a set $X$ to the space $[0,1]^X$ and a function $f \colon X \to Y$ to the precomposition map ${-} \circ f \colon [0,1]^Y \to [0,1]^X$.
    Then, by \cref{r:presentation-rank}, the algebraic theory of $\CoAlg(\V)^\op$ is determined by the following information.
    
    Terms:
    \begin{enumerate}
        \item 
        The (equivalence classes of) terms of rank $0$ and arity $X$ are the continuous maps from $[0,1]^X$ to $[0,1]$. (Since every continuous map from a power of $[0,1]$ to $[0,1]$ depends on at most countably many coordinates \cite[Theorem~1]{Mibu1944}, it is enough to take $X$ countable.)
        
        \item 
        The (equivalence classes of) terms of rank $1$ and arity $X$ are the continuous maps from $\V([0,1]^X)$ to $[0,1]$.
     \end{enumerate}
    
    Composition of terms:
    \begin{enumerate}
        \item 
        Composition of terms of rank $0$ is composition of functions.
        
        \item 
        Composition of terms of rank $0$ and rank $1$ is as follows.
        \begin{enumerate}
            \item 
            The term-composition $t * (\gamma_i)_{i \in I}$ of a continuous map $t \colon [0,1]^I \to [0,1]$ (so, a term of rank $0$ and arity $I$) and a family $(\gamma_i \colon \V([0,1]^X) \to [0,1])_{i \in I}$ of continuous maps  (i.e.\ of terms of rank $1$ and arity $X$) is the composite
            \[
            \V([0,1]^X) \xrightarrow{\langle\gamma_i\rangle_{i \in I}}[0,1]^I \xrightarrow{t} [0,1].
            \]
            \item
            The term-composition $\gamma * (t_x)_{x \in X}$ of a continuous map $\gamma \colon \V([0,1]^X) \to [0,1]$ (i.e.\ a term of rank $1$ and arity $X$) and a family $t = (t_x \colon [0,1]^I \to [0,1])_{x \in X}$ of continuous maps  (i.e.\ of terms of rank $0$ and arity $I$) is the composite
            \[
            \V([0,1]^I) \xrightarrow{\V(\langle t_x \rangle_{x \in X})} \V([0,1]^X)\xrightarrow{\gamma} [0,1].
            \]
        \end{enumerate}
    \end{enumerate}
\end{remark}

\begin{remark} \label{r:axiom-first-version}
    In view of \cref{r:a-description}, $\CoAlg(\V)^\op$ has the following equational axiomatization.
    
    Language:
    \begin{enumerate}
        \item 
        For every (at most countable) set $X$, we take each continuous map from $[0,1]^X$ to $[0,1]$ as a function symbol of arity $X$.
        \item
        For every set $X$, we take each continuous map from $\V([0,1]^X)$ to $[0,1]$ as a function symbol of arity $X$.
    \end{enumerate}
    
    Axioms:
    \begin{enumerate}
        \item 
        For every continuous map $t \colon [0,1]^I \to [0,1]$ and family $(\gamma_i \colon \V([0,1]^X) \to [0,1])_{i \in I}$ of continuous maps, we take the axiom
        \[
        t * (\gamma_i)_{i \in I} = t \circ \langle \gamma_i \rangle_{i \in I}.
        \]
        \item
        For every continuous map $t \colon [0,1]^I \to [0,1]$ and family $(\gamma_i \colon \V([0,1]^X) \to [0,1])_{i \in I}$ of continuous maps 
        we take the axiom
        \[
            t * (\gamma_i)_{i \in I} = t \circ \langle \gamma_i \rangle_{i \in I}.
        \]
        \item
        For every continuous map $\gamma \colon \V([0,1]^X) \to [0,1]$ and family $(t_x \colon [0,1]^I \to [0,1])_{x \in X}$ of continuous maps, we take the axiom
        \[
            \gamma * (t_x)_{x \in X} = \gamma \circ \V(\langle t_x \rangle_{x \in X}).
        \]
    \end{enumerate}
\end{remark}

In this subsection we show that, instead of taking all terms of pure rank $1$, it is sufficient to take a single unary term $\Box$ of rank $1$ (together with all the terms of rank $0$) to generate all terms.
(Since the terms of rank $0$ are generated by finitely many terms \cite{Isbell1982}, it follows that the theory of $\CoAlg(\V)$ is generated by finitely many terms.)

\begin{notation}
    In this subsection, $\Box$ denotes the function
    \begin{align*}
        \V([0,1]) & \longrightarrow [0,1]\\
        K & \longmapsto \inf K,
    \end{align*}
    while $\Diamond$ denotes the function
    \begin{align*}
        \V([0,1]) & \longrightarrow [0,1]\\
        K & \longmapsto \sup K.
    \end{align*}
    (Infima and suprema are computed in $[0,1]$, so $\Box \varnothing = 1$ and $\Diamond \varnothing = 0$.)
    (In the next subsections the functions $\Box$ and $\Diamond$ will be defined in the same way but with a different domain.)
\end{notation}

\begin{lemma} \label{l:Box-Diam-cont}
    The functions $\Box, \Diamond \colon \V([0,1]) \to [0,1]$ are continuous.
\end{lemma}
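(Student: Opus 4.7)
The plan is to verify continuity by checking preimages of a subbase for the topology on $[0,1]$. Recall that the standard topology on $[0,1]$ has as a subbase the sets of the form $(a,1]$ and $[0,b)$ with $a,b \in [0,1]$, so it suffices to show that the preimages of these sets under $\Box$ and $\Diamond$ are open in $\V([0,1])$. In fact, I expect each such preimage to already be one of the subbasic opens $\Box U$ or $\Diamond U$ that generate the Vietoris topology.

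More concretely, I would check the following four identities, being careful about the overloading of $\Box$ and $\Diamond$ (as a symbol for the new map and as a symbol for Vietoris subbasic opens) and about the conventions $\inf \varnothing = 1$ and $\sup \varnothing = 0$:
\begin{align*}
\Box^{-1}\bigl((a,1]\bigr) &= \{K \in \V([0,1]) \mid \inf K > a\} = \{K \mid K \subseteq (a,1]\},\\
\Box^{-1}\bigl([0,b)\bigr) &= \{K \in \V([0,1]) \mid \inf K < b\} = \{K \mid K \cap [0,b) \neq \varnothing\},\\
\Diamond^{-1}\bigl((a,1]\bigr) &= \{K \in \V([0,1]) \mid \sup K > a\} = \{K \mid K \cap (a,1] \neq \varnothing\},\\
\Diamond^{-1}\bigl([0,b)\bigr) &= \{K \in \V([0,1]) \mid \sup K < b\} = \{K \mid K \subseteq [0,b)\}.
\end{align*}
The left-to-right inclusions use compactness of $K$ (the infimum of a nonempty closed subset of $[0,1]$ is attained, so $\inf K > a$ forces $K \subseteq (a,1]$, and similarly for $\sup K$). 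The reverse inclusions are immediate. Each right-hand side is a Vietoris subbasic open, so the preimages are open in $\V([0,1])$.

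The only subtle point is the empty set. With the conventions above, $\varnothing$ belongs to $\Box^{-1}((a,1])$ for every $a < 1$ (consistent with $\varnothing \subseteq (a,1]$), does not belong to $\Box^{-1}([0,b))$ for any $b \leq 1$ (consistent with $\varnothing \cap [0,b) = \varnothing$), and dually for $\Diamond$; so the four set-theoretic identities remain valid at the empty set. I do not expect any genuine obstacle—this is essentially a direct inspection of preimages on a subbase—so the ``hard'' part is just keeping the notational collision between the operators $\Box,\Diamond$ and the Vietoris subbasic opens straight.
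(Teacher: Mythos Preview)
Your proposal is correct and follows essentially the same approach as the paper: both compute the preimages of the subbasic opens $(\lambda,1]$ and $[0,\lambda)$ and identify each as a Vietoris subbasic open. Your treatment is in fact slightly more careful, since you explicitly address the empty set and invoke compactness to justify the nontrivial inclusions.
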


\begin{proof}
    For every $\lambda \in [0,1]$ we have
    \begin{align*}
        \Box^{-1}[(\lambda,1]] & = \{K \in \V([0,1]) \mid \inf K \in (\lambda, 1]\} = \{K \in \V([0,1]) \mid K \subseteq (\lambda, 1]\},\\
        \Box^{-1}[[0, \lambda)] & = \{K \in \V([0,1]) \mid \inf K \in [0,\lambda)\} = \{K \in \V([0,1]) \mid K \cap (0, \lambda] \neq \varnothing\},\\
        \Diamond^{-1}[(\lambda,1]] & = \{K \in \V([0,1]) \mid \sup K \in (\lambda, 1]\} = \{K \in \V([0,1]) \mid K \cap (\lambda,1] \neq \varnothing\},\\
        \Diamond^{-1}[[0,\lambda)] & = \{K \in \V([0,1]) \mid \sup K \in [0,\lambda)\} = \{K \in \V([0,1]) \mid K \subseteq [0,\lambda)\}.
    \end{align*}
    All these sets are open in the Vietoris topology.
    Thus, $\Box$ and $\Diamond$ are continuous.
\end{proof}

In classical modal logic, $\Box$ and $\Diamond$ are interdefinable: $\Diamond x = \lnot \Box \lnot x$ and $\Box x = \lnot \Diamond \lnot x$.
The next lemma shows that the same is true in the context of compact Hausdorff spaces.
Set
\begin{align*}
    \lnot \colon [0,1] & \longrightarrow [0,1]\\
    x & \longmapsto 1 - x.
\end{align*}

\begin{lemma} \label{l:interdefinable}
    \begin{enumerate}
        \item \label{i:D-from-B}
        $\Diamond = \lnot \circ \Box \circ \V(\lnot)$, i.e.\ $\Diamond = \lnot * \Box * \lnot$.

        \item \label{i:B-from-D}
        $\Box = \lnot \circ \Diamond \circ \V(\lnot)$, i.e.\ $\Box = \lnot * \Diamond * \lnot$.
    \end{enumerate}
\end{lemma}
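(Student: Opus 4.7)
The plan is to verify each identity by a direct pointwise computation, unfolding the definitions of $\V$ applied to the morphism $\lnot$ and of $\Box$ and $\Diamond$ as infimum and supremum. The key observation is that $\lnot \colon [0,1] \to [0,1]$ is a continuous involution that reverses the usual order, so it swaps infima and suprema.

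For part \eqref{i:D-from-B}, I would fix $K \in \V([0,1])$ and compute
\[
(\lnot \circ \Box \circ \V(\lnot))(K) = 1 - \inf \lnot[K] = 1 - \inf\{1-x \mid x \in K\}.
\]
Since $x \mapsto 1-x$ is order-reversing on $[0,1]$, one has $\inf\{1-x \mid x \in K\} = 1 - \sup K$ whenever $K$ is nonempty, so the expression collapses to $\sup K = \Diamond(K)$. The empty-set case is handled separately and consistently with the conventions $\Box \varnothing = 1$ and $\Diamond \varnothing = 0$: indeed $\V(\lnot)(\varnothing) = \varnothing$, and $1 - \Box(\varnothing) = 1 - 1 = 0 = \Diamond(\varnothing)$.

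Part \eqref{i:B-from-D} follows by the symmetric computation, or alternatively as a formal consequence of \eqref{i:D-from-B} using that $\lnot \circ \lnot = \mathrm{id}_{[0,1]}$ and hence $\V(\lnot) \circ \V(\lnot) = \mathrm{id}_{\V([0,1])}$ by functoriality of $\V$: composing both sides of \eqref{i:D-from-B} on the left with $\lnot$ and on the right with $\V(\lnot)$ yields \eqref{i:B-from-D}. There is no real obstacle here; the only minor point to keep in mind is the convention on the empty set, which one should verify explicitly to ensure that the identities hold globally on $\V([0,1])$ and not just on its nonempty elements.
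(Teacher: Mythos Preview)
Your proof is correct and follows essentially the same direct pointwise computation as the paper's own proof, which unfolds $(\lnot \circ \Box \circ \V(\lnot))(K)$ to $1 - \inf\{1-x \mid x \in K\} = \sup K$ and declares \eqref{i:B-from-D} analogous. Your explicit check of the empty-set case and the alternative derivation of \eqref{i:B-from-D} from \eqref{i:D-from-B} via $\lnot \circ \lnot = \mathrm{id}$ are small additions beyond what the paper writes, but the approach is the same.
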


\begin{proof}
    \eqref{i:D-from-B}.
    Let $K \in \V([0,1])$.
    Then
    \begin{align*}
        (\lnot \circ \Box \circ \V(\lnot)) (K) & = \lnot (\Box (\V(\lnot)(X)))\\
        & = \lnot (\Box (\lnot[K]))\\
        & = \lnot (\Box \{1 - x \mid x \in K\})\\
        & = 1 - \inf \{1 - x \mid x \in K\}\\
        & = \sup \{x \mid x \in K\}\\
        & = \Diamond K.
    \end{align*}

    \eqref{i:B-from-D} is analogous.
\end{proof}

\noindent In \cref{l:interdefinable}, the function $\lnot \colon [0,1] \to [0,1]$ could be replaced by any order-reversing homomorphism from $[0,1]$ to $[0,1]$.

In the next proposition we recall a fact that combines a categorical characterization of quasi-varieties (see \cite[Theorem~3.6]{Adamek2004}) and the theory of natural dualities, for an overview
of which we refer to \cite{PorstTholen1991}.
We will apply it with $\cat{C} = \CH$, $X = [0,1]$, and $U$ the underlying set functor.

\begin{proposition}[{See e.g.\ \cite[Proposition~2.8]{Abbadini2021}}]\label{p:dual}
    Let $X$ be a regular injective regular cogenerator of a complete category $\cat{C}$ and let $U \colon \cat{C} \to \Set$ be a faithful representable functor.
    Let $\Sigma$ be the signature whose elements of arity $\kappa$ (for each set $\kappa$) are the morphisms from $X^\kappa$ to $X$, and let $\overline{X}$ be the $\Sigma$-algebra whose underlying set is $U(X)$ and on which the interpretation of any operation symbol $f \colon X^\kappa \to X$ is $U(f) \colon U(X)^\kappa \to U(X)$.
    Then, $\cat{C}$ is dually equivalent to $\SP(\overline{X})$.
\end{proposition}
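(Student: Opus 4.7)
The plan is to exhibit the dual equivalence via the representable functor $E \df \hom_\cat{C}(-, X) \colon \cat{C}^\op \to \Set$, upgraded so that each hom-set carries the pointwise $\Sigma$-algebra structure. For each $Y \in \cat{C}$, set $E(Y) \df \hom_\cat{C}(Y, X)$, equipped with the operation sending $f \colon X^\kappa \to X$ in $\Sigma$ and a family $(g_i)_{i \in \kappa}$ of arrows $Y \to X$ to the composite $f \circ \langle g_i \rangle_{i \in \kappa}$; on arrows, $E$ is precomposition. Since $U$ is faithful, the map $g \mapsto U(g)$ embeds $E(Y)$ into $U(X)^{U(Y)} = \overline{X}^{U(Y)}$, and a direct check shows that this image is closed under the pointwise $\Sigma$-operations of $\overline{X}^{U(Y)}$. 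Hence $E$ corestricts to a contravariant functor $\cat{C} \to \SP(\overline{X})$.

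Next I would prove that $E$ is fully faithful. Faithfulness follows from $X$ being a cogenerator: two parallel arrows $h_1, h_2 \colon Y \rightrightarrows Z$ with $E(h_1) = E(h_2)$ satisfy $g \circ h_1 = g \circ h_2$ for every $g \colon Z \to X$, so $h_1 = h_2$. For fullness, given a $\Sigma$-homomorphism $\varphi \colon E(Z) \to E(Y)$, one uses that $\Sigma$ contains \emph{all} morphisms $X^\kappa \to X$: this saturation forces $\varphi$ to be compatible with every piece of $\cat{C}$-structure visible through $X$, and combined with the regular injectivity of $X$ this rigidity produces a unique $\cat{C}$-morphism $h \colon Y \to Z$ with $E(h) = \varphi$.

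Finally, for essential surjectivity: given $A \in \SP(\overline{X})$ with an embedding $A \hookrightarrow \overline{X}^\lambda \cong E(X^\lambda)$, I would write $A$ as an intersection inside $\overline{X}^\lambda$ of kernels of $\Sigma$-morphisms into powers of $\overline{X}$, each of which by fullness corresponds to a parallel pair in $\cat{C}$ out of $X^\lambda$ that $A$ equalises in $\cat{C}^\op$. Letting $Y$ be the corresponding simultaneous coequalizer in $\cat{C}$ (available by completeness of $\cat{C}$ together with the adjoint-functor-theorem content encoded in having a faithful representable $U$), the regular cogenerator property of $X$ together with regular injectivity force $E(Y) \cong A$.

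The main obstacle is the fullness step, where one must transfer $\Sigma$-algebra maps between subalgebras of powers of $\overline{X}$ back to $\cat{C}$-morphisms between the underlying objects; this is the point at which the three hypotheses on $X$ must cooperate (cogeneration separates morphisms, regular injectivity extends partially defined morphisms, and the faithful representable $U$ ensures that $\overline{X}$ records every operation of $X$). With these pieces in place, the statement follows by invoking standard natural-duality machinery (\cite{PorstTholen1991}) together with Ad\'amek's categorical criterion for being a quasi-variety (\cite[Theorem~3.6]{Adamek2004}).
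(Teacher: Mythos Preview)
The paper does not prove this proposition; it is quoted from \cite[Proposition~2.8]{Abbadini2021}, and the surrounding text only notes that it ``combines a categorical characterization of quasi-varieties (see \cite[Theorem~3.6]{Adamek2004}) and the theory of natural dualities \cite{PorstTholen1991}''. Your closing invocation of exactly these two references is therefore already aligned with what the paper actually does, and your setup of $E$, together with the faithfulness and fullness sketches, is sound in outline.

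Your essential-surjectivity paragraph, however, contains a genuine gap. The identification $\overline{X}^\lambda \cong E(X^\lambda)$ is false in general: if $P$ is the representing object of $U$, then $\overline{X} \cong E(P)$, not $E(X)$; in the paper's main example ($\cat{C} = \CH$, $X = [0,1]$, $U$ the underlying-set functor) the $\Sigma$-algebra $\overline{X} = [0,1]$ has the one-point space as its dual, whereas $E([0,1]) = \hom_\CH([0,1],[0,1])$ has dual $[0,1]$, so these are non-isomorphic $\Sigma$-algebras. More structurally, constructing $Y$ as a ``simultaneous coequalizer in $\cat{C}$'' asks for colimits, but the hypotheses give only completeness of $\cat{C}$, and a faithful representable $U$ does not manufacture the missing coequalizers. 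The standard repair runs on the limit side: take the adjoint $G(A) \coloneqq \hom_{\SP(\overline{X})}(A,\overline{X})$, realised inside $X^{\lvert A\rvert}$ as the intersection of the equalizers that express the $\Sigma$-homomorphism conditions (this is where completeness of $\cat{C}$ is actually used), and then use regular injectivity of $X$ to show that the unit $A \to E(G(A))$ is surjective. With this correction the direct argument goes through and matches what the cited references do.
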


\begin{remark} \label{r:explicit-duality}
    The contravariant functor from $\cat{C}$ to $\SP(\overline{X})$ in the duality in \cref{p:dual} maps an object $Y$ to the $\Sigma$-algebra $\hom_{\cat{CH}}(Y,X)$ equipped with pointwise defined operations, and maps a morphism $f \colon Y_1 \to Y_2$ to the precomposition by $f$.
\end{remark}

The following can be thought of as a version of the Stone-Weierstrass theorem \cite{Weierstrass1885,Stone1948}. The theorem is a special case of \cite[Proposition 3.6]{hofmann2002generalization}, but we provide a proof to keep the paper self-contained.

\begin{proposition} \label{p:it-is-all-the-functions}
    Let $\mathcal{C}$ be a set of continuous maps from a compact Hausdorff space $X$ to $[0,1]$. Suppose that $\mathcal{C}$ is point-separating and closed under pointwise application of any continuous map from a power of $[0,1]$ to $[0,1]$.
    Then $\mathcal{C}$ is the set of continuous maps from $X$ to $[0,1]$.
\end{proposition}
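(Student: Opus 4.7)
The plan is to use the family $\mathcal{C}$ to embed $X$ into a power of $[0,1]$ and then invoke the Tietze extension theorem, exploiting the closure assumption to pull an extended map back into $\mathcal{C}$.

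First, I would form the evaluation map
\[
    e \colon X \longrightarrow [0,1]^{\mathcal{C}}, \qquad x \longmapsto (f(x))_{f \in \mathcal{C}}.
\]
Each component of $e$ is continuous, so $e$ is continuous; the point-separation assumption makes $e$ injective; and since $X$ is compact and $[0,1]^{\mathcal{C}}$ is Hausdorff, $e$ is a closed topological embedding, so $Y \df e(X)$ is a closed subspace of $[0,1]^{\mathcal{C}}$.

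Next, given an arbitrary continuous map $g \colon X \to [0,1]$, I would consider the continuous function $g \circ e^{-1} \colon Y \to [0,1]$ defined on the closed subspace $Y$. Since $[0,1]^{\mathcal{C}}$ is a compact Hausdorff (hence normal) space, the Tietze extension theorem produces a continuous extension $\tilde g \colon [0,1]^{\mathcal{C}} \to [0,1]$, i.e.\ a continuous map from a power of $[0,1]$ to $[0,1]$ such that $\tilde g \circ e = g$.

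Finally, I would apply the closure hypothesis: the pointwise application of $\tilde g$ to the family $(f)_{f \in \mathcal{C}} \subseteq \mathcal{C}$ is exactly the map $x \mapsto \tilde g((f(x))_{f \in \mathcal{C}}) = (\tilde g \circ e)(x) = g(x)$, which therefore lies in $\mathcal{C}$. Since $g$ was arbitrary, $\mathcal{C}$ contains every continuous map $X \to [0,1]$, giving equality.

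The only subtle point is verifying that $e$ is an embedding and that $Y$ is closed in $[0,1]^{\mathcal{C}}$, both of which follow from the standard fact that a continuous injection from a compact space to a Hausdorff space is a closed embedding; the rest is a routine application of Tietze together with the stipulated closure of $\mathcal{C}$ under composition with continuous operations indexed by arbitrary sets (in particular, by $\mathcal{C}$ itself).
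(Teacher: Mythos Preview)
Your proof is correct and is a genuinely different, more elementary route than the paper's. You embed $X$ into $[0,1]^{\mathcal{C}}$ via evaluation, apply Tietze to extend an arbitrary $g \colon X \to [0,1]$ along this closed embedding, and then invoke the closure hypothesis with the tautological $\mathcal{C}$-indexed family to recover $g$ inside $\mathcal{C}$. Every step is sound; in particular, the closure hypothesis is stated for arbitrary index sets, so taking the index set to be $\mathcal{C}$ itself is legitimate.

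The paper instead argues via duality: it views $\mathcal{C}$ as an object of $\SP([0,1])$, represents it as $\hom_{\CH}(Y,[0,1])$ for some $Y$, obtains a morphism $f \colon X \to Y$ inducing the inclusion $\mathcal{C} \hookrightarrow \hom_{\CH}(X,[0,1])$, and shows $f$ is a bijection (epi by injectivity of the inclusion, mono by point-separation). Your Tietze argument is self-contained and avoids the natural-duality machinery; the paper's argument, by contrast, is uniform across the three settings treated later (compact ordered, stably compact), where one simply swaps in the corresponding $\SP$-duality, whereas your approach would require the ordered analogue of Tietze in those cases.
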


\begin{proof}
    We let $\iota \colon \mathcal{C} \to \hom_\CH(X, [0,1])$ denote the inclusion function.
    By hypothesis, $\mathcal{C} \in \SP([0,1])$.
    By \cref{p:dual,r:explicit-duality}, there is $Y \in \CH$ and an isomorphism $\gamma \colon \hom_\CH(Y, [0,1]) \to \mathcal{C}$ in $\SP([0,1])$.
    Moreover, there is a continuous map $f \colon X \to Y$ such that the map $\iota \circ \gamma \colon \hom_\CH(Y, [0,1]) \to \hom_\CH(X, [0,1])$ is the precomposition ${-}\circ f$ by $f$.
    The function $\iota \circ \gamma$ is injective and thus a monomorphism in $\SP([0,1])$; therefore, $f$ is an epimorphism in $\CH$ and hence surjective.
    To prove that $f$ is injective, let $x,y \in X$ with $f(x) = f(y)$.
    Then, for every $g \in \hom_\CH(Y, [0,1])$ we have
    \[
    (\iota \circ \gamma)(g)(x) = (g \circ f)(x) = g(f(x)) = g(f(y)) = (g \circ f)(y) = (\iota \circ \gamma)(g)(y).
    \]
    Since the image of $\iota \circ \gamma$ is separating, it follows that $x = y$.
    Hence, $f$ is a continuous bijection, i.e.\ an isomorphism.
    Thus, ${-} \circ f = \iota \circ \gamma$ is an isomorphism, and thus $\iota$ is surjective.
\end{proof}

In view of \cref{r:a-description}, the following says that the terms of $\CoAlg(\V)^\op$ are generated by the terms of $\CH^\op$ and $\Box$.

\begin{theorem} \label{t:generated-by-Box}
    Let $X$ be a set.
    For every continuous map $f \colon \V([0,1]^X) \to [0,1]$ there are
    a(n at most countable) set $Y$, a continuous map $s \colon [0,1]^Y \to [0,1]$ and a family $(h_y \colon [0,1]^X \to [0,1])_{y \in Y}$ of continuous maps such that
    \[
    f = s \circ \langle\Box \circ \V h_y\rangle_{y \in Y}.
    \]
\end{theorem}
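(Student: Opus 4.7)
The plan is to apply the Stone--Weierstrass-like Proposition \ref{p:it-is-all-the-functions} to the collection
\[
\mathcal{C} \coloneqq \{\, s \circ \langle \Box \circ \V h_y\rangle_{y \in Y} \mid Y \text{ a set},\ s \colon [0,1]^Y \to [0,1],\ h_y \colon [0,1]^X \to [0,1] \text{ continuous}\,\}
\]
of continuous maps $\V([0,1]^X) \to [0,1]$. Every such composite is indeed continuous, using Lemma \ref{l:Box-Diam-cont} together with the functoriality of $\V$ (which makes each $\V h_y \colon \V([0,1]^X) \to \V([0,1])$ continuous); taking $Y = \emptyset$ moreover places the constants into $\mathcal{C}$. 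Once the two hypotheses of Proposition \ref{p:it-is-all-the-functions} are checked, the conclusion that $\mathcal{C}$ equals the whole set of continuous maps $\V([0,1]^X) \to [0,1]$ will deliver the required factorisation.

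The first hypothesis, closure of $\mathcal{C}$ under pointwise application of continuous maps $[0,1]^I \to [0,1]$, is a straightforward bookkeeping step. Given a continuous $t \colon [0,1]^I \to [0,1]$ and members $f_i = s_i \circ \langle \Box \circ \V h_{i,y}\rangle_{y \in Y_i}$ of $\mathcal{C}$, I will set $Y \coloneqq \coprod_{i \in I} Y_i$, define $s \coloneqq t \circ \prod_{i \in I} s_i \colon [0,1]^Y \to [0,1]$, and put $h_{(i,y)} \coloneqq h_{i,y}$; unfolding the products gives $t \circ \langle f_i\rangle_{i \in I} = s \circ \langle \Box \circ \V h_{(i,y)}\rangle_{(i,y) \in Y}$, exhibiting the pointwise composite as an element of $\mathcal{C}$. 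The second hypothesis, point-separation, is where the shape of $\mathcal{C}$ really matters: given distinct $K, L \in \V([0,1]^X)$, I may assume without loss of generality that there is some $p \in K \setminus L$. Since $[0,1]^X$ is compact Hausdorff (hence normal), Urysohn's lemma supplies a continuous $h \colon [0,1]^X \to [0,1]$ with $h(p) = 0$ and $h \equiv 1$ on $L$, whence $(\Box \circ \V h)(K) = \inf h[K] = 0$ while $(\Box \circ \V h)(L) = \inf h[L] = 1$; and $\Box \circ \V h$ lies in $\mathcal{C}$ via $Y = \{\ast\}$ and $s = \mathrm{id}$.

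Applying Proposition \ref{p:it-is-all-the-functions} will then yield $\mathcal{C} = \hom_\CH(\V([0,1]^X), [0,1])$, which is the claimed factorisation for arbitrary index sets $Y$. To reduce $Y$ to an at most countable set as demanded by the statement, I will appeal to Mibu's theorem (as cited in Remark \ref{r:a-description}): the continuous $s \colon [0,1]^Y \to [0,1]$ factors through the projection onto some countable subset $Y' \subseteq Y$, so I may replace $s$ by its factored version and discard the $h_y$ for $y \notin Y'$. I expect no serious obstacle; the conceptual heart of the argument is that a single closed-set-versus-point separation by Urysohn, read through $\Box \circ \V h$, already produces a point-separating family in $\mathcal{C}$, which is precisely why a unary $\Box$ suffices on top of the rank-zero theory of $\CH^\op$.
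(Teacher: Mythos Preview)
Your proposal is correct and follows essentially the same route as the paper: define $\mathcal{C}$, observe continuity via \cref{l:Box-Diam-cont}, check the closure hypothesis (which the paper leaves implicit), prove point-separation by Urysohn's lemma exactly as you do, apply \cref{p:it-is-all-the-functions}, and finish with Mibu's theorem for countability. The only differences are expository---you spell out the closure bookkeeping and the constants, which the paper omits.
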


\begin{proof}
    Let $\mathcal{C}$ be the set of functions $f \colon \V([0,1]^X) \to [0,1]$ for which there are a set $Y$, a continuous map $s \colon [0,1]^Y \to [0,1]$ and a family $(h_y \colon [0,1]^X \to [0,1])_{y \in Y}$ of continuous maps such that $f = s \circ \langle\Box \circ \V h_y\rangle_{y \in Y}$.
    We prove that $\mathcal{C}$ is the set of continuous maps from $\V([0,1]^X)$ to $[0,1]$.
    By \cref{l:Box-Diam-cont}, $\Box$ is continuous; hence, every function in $\mathcal{C}$ is continuous.
    Since $\mathcal{C}$ is closed under every continuous map from some power of $[0,1]$ to $[0,1]$, by \cref{p:it-is-all-the-functions} it is enough to prove that $\mathcal{C}$ is point-separating, i.e.\ that for all distinct $K, L \in \V([0,1]^X)$ there is $f \in \mathcal{C}$ such that $f(K) \neq f(L)$.
    Let $K, L \in \V([0,1]^X)$ be distinct.
    Without loss of generality, we may suppose $K \nsubseteq L$.
    Then, there is $y \in K \setminus L$.
    By Urysohn's lemma, there is a continuous map $h \colon [0,1]^X \to [0,1]$ such that $h(y) = 0$ and $h[L] \subseteq \{1\}$.
    Then, $\Box \circ \V h \in \mathcal{C}$ and
    \begin{align*}
        (\Box \circ \V h) (K) & = \Box (\V h (K)) = \Box (h[K]) = \inf h[K] = 0,\\
        (\Box \circ \V h) (L) & = \Box (\V h (L)) = \Box (h[L]) = \inf h[L] = 1.
    \end{align*}
    Thus, $\mathcal{C}$ separates the points.
    
    One can take $Y$ to be at most countable because every continuous map from a power of $[0,1]$ to $[0,1]$ depends on at most countably many coordinates \cite[Theorem~1]{Mibu1944}.
\end{proof}

\begin{remark} \label{r:axiomatization-with-Box}
    In light of \cref{t:generated-by-Box}, an axiomatization of $\CoAlg(\V)^\op$ improving the one in \cref{r:axiom-first-version} is as follows.

    Language:
    For each $\kappa \in \omega \cup \{\omega\}$, we take each continuous function from $[0,1]^\kappa$ to $[0,1]$ as a function symbol of arity $\kappa$.
    Moreover, we take $\Box$ as a unary function symbol.
    
    Axioms:
    \begin{enumerate}
        \item 
        For all $\kappa, \lambda \in \omega \cup \{\omega\}$, each continuous map $t \colon [0,1]^\kappa \to [0,1]$ and each family $(\gamma_i \colon [0,1]^\lambda \to [0,1])_{i \in \kappa}$ of continuous maps, we take the axiom
        \[
        t * (\gamma_i)_{i \in \kappa} = t \circ \langle \gamma_i \rangle_{i \in \kappa}.
        \]
        Here, the left-hand side is a formal composition of function symbols, while the right-hand side is just one function symbol (of arity $\lambda$).

        \item
        For all $\kappa, \lambda, \kappa', \lambda' \in \omega \cup \{\omega\}$, all continuous maps $t \colon [0,1]^\kappa \to [0,1]$ and $t' \colon [0,1]^{\kappa'} \to [0,1]$, and all families $(s_{i} \colon [0,1]^{\lambda} \to [0,1])_{i \in I}$ and $(s'_{i} \colon [0,1]^{\lambda'} \to [0,1])_{i \in I'}$ of continuous maps such that
        $t \circ \langle\Box \circ \V s_i\rangle_{i \in \kappa} = t' \circ \langle\Box \circ \V(s'_i)\rangle_{i \in \kappa'}$, we take the axiom
        \[
            t * (\Box * s_i)_{i \in \kappa} = t' * (\Box * s'_i)_{i \in \kappa'}
        \]
    \end{enumerate}
\end{remark}

\begin{remark}
    Since the terms of rank $0$ are generated by finitely many terms \cite{Isbell1982}, it follows from the results above that the theory of $\CoAlg(\V)^\op$ is generated by finitely many terms. (We simply add $\Box$ to the finitely many terms of $\CH^\op$.)
    Moreover, we note that each term depends on countably many coordinates only.
\end{remark}

\begin{remark}
    The full, faithful and essentially surjective contravariant functor from $\CoAlg(\V)$ to the variety described in \cref{r:axiomatization-with-Box} is as follows.
    To a coalgebra $f \colon X \to \V(X)$ we associate the algebra with $\hom_{\CH}(X, [0,1])$ as the underlying set and with the interpretation of the operations as follows:
    \begin{enumerate}
        \item Every continuous function $[0,1]^I \to [0,1]$ is interpreted pointwise.
        
        \item The symbol $\Box$ is interpreted as follows: for $g \in \hom_{\CH}(X, [0,1])$,
        \begin{align*}
            \Box g \colon X & \longrightarrow [0,1]\\
            x & \longrightarrow \inf g[f(x)].
        \end{align*}
    \end{enumerate}
    To a morphism $g \colon X_1 \to X_2$ from $f_1 \colon X_1 \to \V(X_1)$ to $f_2 \colon X_2 \to \V(X_2)$ we associate the precomposition by $g$.
\end{remark}

\subsection{Convex Vietoris on compact ordered spaces}

We now do a similar study for the convex Vietoris functors on compact ordered spaces.

\begin{notation}
    In this subsection, $\Box$ and $\Diamond$ denote the functions from $\Vc ([0,1])$ to $[0,1]$ that map $A$ to $\inf A$ and $\sup A$, respectively.
\end{notation}

\begin{lemma} \label{l:BD-cont-order-p}
    The functions $\Box, \Diamond \colon \Vc([0,1]) \to [0,1]$ are continuous and order-preserving.
\end{lemma}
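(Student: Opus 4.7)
The plan is to verify the two assertions of the lemma separately, the continuity by computing preimages of a subbase for $[0,1]$ and the order-preservation by unwinding the Egli-Milner order. Essentially this will follow the template of \cref{l:Box-Diam-cont}, the only real novelty being that the convex Vietoris topology has more basic opens (indexed by open upsets \emph{and} open downsets), which is precisely what we need to witness the preimages.

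For continuity, I would use the subbase of $[0,1]$ given by the sets $(\lambda,1]$ (open upsets) and $[0,\lambda)$ (open downsets), for $\lambda \in [0,1]$. Since these are respectively open upsets and open downsets, the associated sets $\Box(\lambda,1]$, $\Diamond(\lambda,1]$, $\Box[0,\lambda)$, $\Diamond[0,\lambda)$ are all subbasic opens in the convex Vietoris topology on $\Vc([0,1])$. The computation then goes
\begin{align*}
    \Box^{-1}[(\lambda,1]] &= \{A \in \Vc([0,1]) \mid \inf A > \lambda\} = \{A \mid A \subseteq (\lambda,1]\} = \Box(\lambda,1], \\
    \Box^{-1}[[0,\lambda)] &= \{A \mid \inf A < \lambda\} = \{A \mid A \cap [0,\lambda) \neq \varnothing\} = \Diamond[0,\lambda),
\end{align*}
and symmetrically
\begin{align*}
    \Diamond^{-1}[(\lambda,1]] &= \{A \mid A \cap (\lambda,1] \neq \varnothing\} = \Diamond(\lambda,1], \\
    \Diamond^{-1}[[0,\lambda)] &= \{A \mid A \subseteq [0,\lambda)\} = \Box[0,\lambda).
\end{align*}
The one point worth being a little careful about is the justification of $\inf A > \lambda \iff A \subseteq (\lambda,1]$ for $A$ a closed set in $\Vc([0,1])$: this is immediate when $A \neq \varnothing$ because then $\inf A \in A$, and when $A = \varnothing$ one uses the conventions $\inf \varnothing = 1$ and $\sup \varnothing = 0$ to check consistency in the four cases.

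For order-preservation, suppose $K \leq_{\EM} L$ in $\Vc([0,1])$. By the explicit form of the Egli-Milner order recalled in the paper, every $y \in L$ has some $x \in K$ with $x \leq y$, hence $\inf K \leq x \leq y$, and taking infima over $y \in L$ gives $\inf K \leq \inf L$; dually, every $x \in K$ has some $y \in L$ with $x \leq y \leq \sup L$, and taking suprema over $x \in K$ gives $\sup K \leq \sup L$. (The empty-set case is vacuous, since under the Egli-Milner order $\varnothing$ is comparable only to itself.) Thus both $\Box$ and $\Diamond$ are order-preserving. I do not anticipate any real obstacle here: the proof is a routine unpacking of the definitions, and the only mild subtlety is keeping track of the fact that in the convex Vietoris topology we have box and diamond constructors available for both open upsets and open downsets, which is exactly what makes the four preimage computations work.
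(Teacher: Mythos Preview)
Your proposal is correct and follows essentially the same approach as the paper: the paper's proof of continuity simply refers back to \cref{l:Box-Diam-cont} (whose computation is exactly the one you write out), and for order-preservation the paper argues via the equivalent formulation $\u L \subseteq \u K$, $\d K \subseteq \d L$ rather than the pointwise description, but this is the same argument. Your explicit observation that the four preimages land among the subbasic opens of $\Vc([0,1])$ (because $(\lambda,1]$ is an open upset and $[0,\lambda)$ an open downset) is precisely the detail needed to adapt \cref{l:Box-Diam-cont} to the convex setting.
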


\begin{proof}
    The proof of continuity is similar to the proof of \cref{l:Box-Diam-cont}.
    
    We prove that $\Box \colon \Vc([0,1]) \to [0,1]$ is order-preserving.
    Let $K, L \in \Vc([0,1])$ with $K \leq_{\EM} L$.
    Then $\u L \subseteq \u K$, and thus $\Box K = \Box \u K = \inf \u K \leq \inf \u L = \Box \u L = \Box L$.
    Analogously, $\Diamond \colon \Vc([0,1]) \to [0,1]$ is order-preserving.
\end{proof}

A family of order-preserving functions $(f_i \colon X \to Y_i)_{i \in I}$ between posets is said to be \emph{order-separating} if $x_1 \nleq x_2$ implies that there is $i \in I$ such that $f(x_1) \nleq f(x_2)$.

Similarly to \Cref{p:it-is-all-the-functions}, the proposition below is a special case of \cite[Proposition 3.6]{hofmann2002generalization}.

\begin{proposition} \label{p:it-is-all-the-functions-order}
    Let $\mathcal{C}$ be a set of continuous order-preserving maps from a compact ordered space $X$ to $[0,1]$. Suppose that $\mathcal{C}$ is order-separating and closed under pointwise application of any continuous order-preserving map from a power of $[0,1]$ to $[0,1]$.
    Then $\mathcal{C}$ is the set of continuous order-preserving maps from $X$ to $[0,1]$.
\end{proposition}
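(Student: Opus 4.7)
The plan is to mirror the proof of \cref{p:it-is-all-the-functions}, replacing the $\CH$-duality with the analogous duality for $\CompOrd$ established in \cite{Abbadini2019,AbbadiniReggio2020}. Specifically, I would apply \cref{p:dual} with $\cat{C} = \CompOrd$, dualizing object $[0,1]$ (a regular injective regular cogenerator in $\CompOrd$), and $U$ the underlying-set functor. This yields a duality between $\CompOrd$ and $\SP(\overline{[0,1]})$, where the signature consists of all continuous order-preserving maps from powers of $[0,1]$ to $[0,1]$, interpreted pointwise on $\overline{[0,1]}$. By hypothesis, $\mathcal{C}$ is closed under all such operations, so $\mathcal{C}$ is a $\Sigma$-subalgebra of $\hom_\CompOrd(X,[0,1])$ and hence lies in $\SP(\overline{[0,1]})$.

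Next, I would produce $Y \in \CompOrd$ and an isomorphism $\gamma \colon \hom_\CompOrd(Y,[0,1]) \to \mathcal{C}$ in $\SP(\overline{[0,1]})$. Writing $\iota \colon \mathcal{C} \hookrightarrow \hom_\CompOrd(X,[0,1])$ for the inclusion, the composite $\iota \circ \gamma$ corresponds via the explicit duality (cf.\ \cref{r:explicit-duality}) to a morphism $f \colon X \to Y$ in $\CompOrd$ with $\iota \circ \gamma = {-} \circ f$. Injectivity of $\iota \circ \gamma$ makes it a monomorphism in $\SP(\overline{[0,1]})$, so $f$ is an epimorphism in $\CompOrd$, and hence surjective. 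For the ordered analogue of injectivity, I would show that $f$ is order-reflecting: if $f(x_1) \leq f(x_2)$, then for every $g \in \hom_\CompOrd(Y,[0,1])$ we have
\[
(\iota \circ \gamma)(g)(x_1) = g(f(x_1)) \leq g(f(x_2)) = (\iota \circ \gamma)(g)(x_2),
\]
i.e.\ $h(x_1) \leq h(x_2)$ for every $h \in \mathcal{C}$; order-separation of $\mathcal{C}$ then forces $x_1 \leq x_2$.

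By antisymmetry, order-reflection implies that $f$ is also injective, so $f$ is a continuous bijection between compact Hausdorff spaces, hence a homeomorphism. Its inverse is order-preserving precisely because $f$ is order-reflecting, so $f$ is an isomorphism in $\CompOrd$. Consequently $\iota \circ \gamma$ is an isomorphism, $\iota$ is surjective, and $\mathcal{C} = \hom_\CompOrd(X,[0,1])$. The main obstacle is the translation between order-separation of $\mathcal{C}$ and order-reflection of $f$: in the $\CH$ case a single point-separation hypothesis immediately gives injectivity, while here I must pass through the intermediate notion of order-reflection and then invoke antisymmetry to recover injectivity, all the while keeping track of the order-enriched structure on the dualizing object $[0,1]$.
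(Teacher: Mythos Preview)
Your proposal is correct and follows essentially the same route as the paper's proof: apply \cref{p:dual} with $\cat{C} = \CompOrd$ and dualizing object $[0,1]$, obtain $f \colon X \to Y$ from the duality, use injectivity of $\iota\circ\gamma$ to get surjectivity of $f$, and use order-separation of $\mathcal{C}$ to show $f$ is order-reflecting, hence an isomorphism. Your step~7 spells out in slightly more detail than the paper why a continuous order-reflecting surjection in $\CompOrd$ is an isomorphism, but the argument is the same.
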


\begin{proof}
    We let $\iota \colon \mathcal{C} \to \hom_\CompOrd(X, [0,1])$ denote the inclusion function.
    By hypothesis, $\mathcal{C} \in \SP([0,1])$.
    By \cref{p:dual,r:explicit-duality}, there is a compact ordered space $Y$ and an isomorphism $\gamma \colon \hom_\CompOrd(Y, [0,1]) \to \mathcal{C}$.
    Moreover, there is a continuous order-preserving map $f \colon X \to Y$ such that the map $\iota \circ \gamma \colon \hom_\CompOrd(Y, [0,1]) \to \hom_\CompOrd(X, [0,1])$ is the precomposition ${-}\circ f$ by $f$.
    The function $\iota \circ \gamma$ is injective and thus a monomorphism in $\SP([0,1])$; therefore, $f$ is an epimorphism in $\CompOrd$ and hence surjective.
    We prove that $f$ is order-reflecting.
    Let $x,y \in X$ and suppose $f(x) \leq f(y)$.
    Then, for every $g \in \hom_\CompOrd(Y, [0,1])$,
    \[
    (\iota \circ \gamma)(g)(x) = (g \circ f)(x) = g(f(x)) \leq g(f(y)) = (g \circ f)(y) = (\iota \circ \gamma)(g)(y).
    \]
    Since the image of $\iota \circ \gamma$ is order-separating, $x \leq y$.
    Hence, $f$ is a continuous order-reflecting bijection, i.e.\ an isomorphism.
    Thus, ${-} \circ f = \iota \circ \gamma$ is an isomorphism, whence $\iota$ is surjective.
\end{proof}

The following says that the terms of $\CoAlg(\Vc)^\op$ are generated by the terms of $\CompOrd^\op$, $\Box$ and $\Diamond$.

\begin{theorem}
    Let $X$ be a set.
    For every continuous order-preserving map $f \colon \Vc([0,1]^X) \to [0,1]$ there is a(n at most countable) set $Y$, a continuous order-preserving map $s \colon [0,1]^Y \to [0,1]$, a family $(h_y \colon [0,1]^X \to [0,1])_{y \in Y}$ of continuous order-preserving maps, and a family $(\otimes_y)_{y \in Y}$ of elements of $\{\Box, \Diamond\}$ such that
    \[
    f = s \circ \langle\otimes_y \circ \Vc h_y\rangle_{y \in Y}.
    \]
\end{theorem}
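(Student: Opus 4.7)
The plan is to mirror the proof of Theorem~\ref{t:generated-by-Box}, substituting Proposition~\ref{p:it-is-all-the-functions} by its ordered analogue Proposition~\ref{p:it-is-all-the-functions-order}. Let $\mathcal{C}$ denote the set of maps $\Vc([0,1]^X)\to[0,1]$ that can be written in the claimed form $s\circ\langle\otimes_y\circ\Vc h_y\rangle_{y\in Y}$. By Lemma~\ref{l:BD-cont-order-p}, together with functoriality of $\Vc$ and the fact that composites of continuous order-preserving maps are continuous order-preserving, every member of $\mathcal{C}$ is continuous and order-preserving. Moreover $\mathcal{C}$ is by construction closed under componentwise post-composition with any continuous order-preserving map from a power of $[0,1]$ to $[0,1]$. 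Thus Proposition~\ref{p:it-is-all-the-functions-order} reduces the theorem to showing that $\mathcal{C}$ is order-separating on $\Vc([0,1]^X)$.

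For order-separation, suppose $K,L\in\Vc([0,1]^X)$ satisfy $K\not\leq_{\EM}L$; by definition either $\u L\not\subseteq\u K$ or $\d K\not\subseteq\d L$. In the first case pick $y\in L\setminus\u K$; then $\u K$ is a closed upset (by Lemma~\ref{l:up-set-closed}) disjoint from the closed downset $\d y$, since any $z$ in the intersection would force $y\in\u K$. By Nachbin's order-preserving Urysohn lemma---which in a compact ordered space follows from Lemma~\ref{l:normal} via the standard dyadic interpolation---there is a continuous order-preserving $h\colon[0,1]^X\to[0,1]$ with $h[\u K]=\{1\}$ and $h[\d y]=\{0\}$. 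Since $[0,1]$ is totally ordered, for any compact $A\subseteq[0,1]^X$ the convex hull $\ud h[A]$ is the closed interval $[\inf h[A],\sup h[A]]$, whence $\Box(\Vc h(A))=\inf h[A]$ and $\Diamond(\Vc h(A))=\sup h[A]$. Therefore $(\Box\circ\Vc h)(K)=1$ while $(\Box\circ\Vc h)(L)\leq h(y)=0$, and $\Box\circ\Vc h$ is the required order-separator. The case $\d K\not\subseteq\d L$ is symmetric: pick $x\in K\setminus\d L$, note that $\u x$ and $\d L$ are a disjoint closed upset and closed downset, apply Nachbin's Urysohn to obtain $h$ with $h(x)=1$ and $h[L]\subseteq\{0\}$, and observe that $(\Diamond\circ\Vc h)(K)\geq 1>0=(\Diamond\circ\Vc h)(L)$.

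I expect the main technical ingredient to be the ordered version of Urysohn's lemma; once the separator $h$ is produced, the identifications $\Box(\Vc h(A))=\inf h[A]$ and $\Diamond(\Vc h(A))=\sup h[A]$ are routine. The use of \emph{both} $\Box$ and $\Diamond$---rather than a single modality as in Theorem~\ref{t:generated-by-Box}---is forced by the fact that the Egli-Milner order has two independent components, each of which may fail, and these are detected by the infimum and supremum respectively. Finally, the countability of $Y$ is handled exactly as in Theorem~\ref{t:generated-by-Box}: the outer map $s$ may be replaced by one depending on at most countably many coordinates via Mibu's theorem \cite{Mibu1944}, after which only the corresponding countable subfamily of $\otimes_y\circ\Vc h_y$ need be retained.
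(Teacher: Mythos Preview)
Your proposal is correct and follows essentially the same approach as the paper: define $\mathcal{C}$ as the set of maps of the claimed form, invoke Proposition~\ref{p:it-is-all-the-functions-order} to reduce to order-separation, and then use the ordered Urysohn lemma to separate with $\Box$ in the case $\u L\nsubseteq\u K$ and with $\Diamond$ in the case $\d K\nsubseteq\d L$. The only cosmetic differences are that the paper picks $y\in\u L\setminus\u K$ (rather than your $y\in L\setminus\u K$, which is equivalent since $L\subseteq\u K$ would force $\u L\subseteq\u K$) and cites Nachbin directly for the ordered Urysohn lemma rather than deriving it from Lemma~\ref{l:normal}.
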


\begin{proof}
    Let $\mathcal{C}$ be the set of ordered-preserving continuous maps $f \colon \Vc([0,1]^X) \to [0,1]$ for which there are a set $Y$, a continuous order-preserving map $s \colon [0,1]^Y \to [0,1]$, a family $(h_y \colon [0,1]^X \to [0,1])_{y \in Y}$ of continuous order-preserving maps and a family $(\otimes_y)_{y \in Y}$ of elements of $\{\Box, \Diamond\}$ such that $f = s \circ \langle\otimes_y \circ \Vc h_y\rangle_{y \in Y}$.
    We shall prove that $\mathcal{C}$ is the set of continuous order-preserving maps from $\Vc([0,1]^X)$ to $[0,1]$.
    By \cref{l:BD-cont-order-p}, $\Box$ and $\Diamond$ are order-preserving and continuous, and thus every function in $\mathcal{C}$ is order-preserving and continuous.
    Since $\mathcal{C}$ is closed under every continuous map from some power of $[0,1]$ to $[0,1]$, by \cref{p:it-is-all-the-functions-order} it is enough to prove that $\mathcal{C}$ is order-separating, i.e.\ that for all $K, L \in \Vc([0,1]^X)$ with $K \not\leq_{\EM} L$ there is $f \in \mathcal{C}$ such that $f(K) \not\leq f(L)$.
    Let $K, L \in \Vc([0,1]^X)$ with $K \not\leq_{\EM} L$.
    Then, either $\u L \nsubseteq \u K$ or $\d K \nsubseteq \d L$.
	
    Case $\u L \nsubseteq \u K$.
    Then there is $y \in \u L \setminus \u K$.
    By the ordered Urysohn's lemma \cite[Theorem~1, p.~30]{Nachbin1965} (which applies to compact ordered spaces in light of \cite[Corollary of Theorem~4, p.~48]{Nachbin1965}), there is a continuous order-preserving map $h \colon [0,1]^X \to [0,1]$ such that $h(y) = 0$ and $h[K] \subseteq \{1\}$.
    Then,
    \begin{align*}
        (\Box \circ \Vc h) (K) & = \Box (\Vc h (K)) = \Box (\ud h[K]) = \inf \ud h[K] = \inf h[K] = 1,\\
        (\Box \circ \Vc h) (L) & = \Box (\Vc h (L)) = \Box \ud h[L] = \inf \Box \ud h[L] = \inf h[L] = 0.
    \end{align*}
    Then, $\Box \circ \Vc(h) \in \mathcal{C}$ and $(\Box \circ \Vc)(K) \not\leq (\Box \circ \Vc)(L)$.
	
    Case $\d K \nsubseteq \d L$.
    Then there is $y \in \d K \setminus \d L$.
    By the ordered Urysohn's lemma, there is a continuous order-preserving map $h \colon [0,1]^X \to [0,1]$ such that $h(y) = 1$ and $h[L] \subseteq \{0\}$.
    Then,
    \begin{align*}
	(\Diamond \circ \Vc h) (K) & = \Diamond (\Vc h (K)) = \Diamond (\ud h[K]) = \sup \ud h[K] = \sup h[K] = 1,\\
	(\Diamond \circ \Vc h) (L) & = \Diamond (\Vc h (L)) = \Diamond (\ud h[L]) = \sup\ud h[L] = \sup h[L] = 0.
    \end{align*}
    Then, $\Diamond \circ \Vc h \in \mathcal{C}$ and $(\Diamond \circ \Vc h)(K) \not\leq (\Diamond \circ \Vc h)(L)$.
	
    Since every continuous map $f \colon [0,1]^Y \to [0,1]$ depends on at most countably many coordinates, we can take $Y$ to be countable.
\end{proof}

\begin{remark}
    Since the terms of rank $0$ are generated by finitely many terms \cite[Chapter~7]{Abbadini2021}, it follows from the results above that the theory of $\CoAlg(\V)$ is generated by finitely many terms. (We simply add $\Box$ and $\Diamond$ to the theory of $\CompOrd^\op$.)
\end{remark}

\subsection{Upper and lower Vietoris on stably compact spaces}

We now do a similar study for the upper and lower Vietoris functors.

We recall from \cref{n:01up} that $[0,1]^\uparrow$ (resp.\ $[0,1]^\downarrow$) denotes the space whose underlying set is the unit interval $[0,1]$ and whose topology is the set of upsets (resp.\ downsets) of $[0,1]$ that are open in the Euclidean topology.

\begin{notation}
    In this subsection, $\Box$ denotes the function from $\Vu([0,1]^\uparrow)$ to $[0,1]^\uparrow$ that maps $A$ to $\inf A$, while $\Diamond$ denotes the function from $\Vl([0,1]^\downarrow)$ to $[0,1]^\downarrow$ that maps $A$ to $\sup A$.
\end{notation}

\begin{lemma} \label{l:BD-cont-order-p-perfect}
    The functions $\Box \colon \Vu([0,1]^\uparrow) \to [0,1]^\uparrow$ and $\Diamond \colon \Vl([0,1]^\downarrow) \to [0,1]^\downarrow$ are perfect.
\end{lemma}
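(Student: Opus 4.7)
The plan is to verify directly the two conditions that define a perfect map: continuity, and that the preimage of every compact saturated set is compact saturated. By the de Groot duality recorded in \cref{r:dual-vietoris}, which swaps $\Vu \leftrightarrow \Vl$ and $[0,1]^\uparrow \leftrightarrow [0,1]^\downarrow$ and identifies $\Box$ with $\Diamond$ up to the natural isomorphism $\dG \circ \Vu \cong \Vl \circ \dG$, it will suffice to handle $\Box \colon \Vu([0,1]^\uparrow) \to [0,1]^\uparrow$; the statement for $\Diamond$ will then follow formally by applying $\dG$.

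For continuity of $\Box$, I would note that the topology of $[0,1]^\uparrow$ is generated by the subbasic opens $(\lambda, 1]$ for $\lambda \in [0,1)$, and a one-line rewriting gives
\[
    \Box^{-1}((\lambda,1]) = \{K \in \Vu([0,1]^\uparrow) : \inf K > \lambda\} = \{K : K \subseteq (\lambda,1]\} = \Box(\lambda,1],
\]
which is itself a subbasic open of the upper Vietoris topology on $\Vu([0,1]^\uparrow)$.

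For the preservation of compact saturated sets, I would first enumerate the compact saturated subsets of $[0,1]^\uparrow$: since its specialization order is the usual order, saturated means upward closed, and the very restricted shape of the opens forces the compact saturated subsets to be precisely $\varnothing$ and the intervals $[a,1]$ for $a \in [0,1]$. Then $\Box^{-1}([a,1]) = \{K \in \Vu([0,1]^\uparrow) : K \subseteq [a,1]\}$ is saturated in $\Vu([0,1]^\uparrow)$, since it is downward closed under inclusion and the specialization order of $\Vu$ is reverse inclusion, and it is compact: any cover by subbasic opens $\Box(\lambda_i,1]$ must in particular contain the element $[a,1]$, which forces some $\lambda_i < a$, and that single $\Box(\lambda_i,1]$ already contains every $K \subseteq [a,1]$, providing a one-element subcover.

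The main difficulty will be the bookkeeping of the several specialization orders simultaneously in play — the usual order on $[0,1]^\uparrow$, its reverse on $[0,1]^\downarrow$, reverse inclusion on $\Vu$, and inclusion on $\Vl$ — so it is easy to conflate saturated with its order-dual; one must also keep in mind that $\varnothing$ is a legitimate element of both $\Vu$ and $\Vl$. Invoking \cref{r:dual-vietoris} is precisely what turns the single $\Box$-calculation above into a proof of both halves of the lemma at once.
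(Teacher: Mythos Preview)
Your direct verification for $\Box$ is correct and more explicit than the paper's one-line pointer to \cref{l:BD-cont-order-p}; the paper intends the reader to transport the $\CompOrd$ argument (continuity plus order-preservation) through the isomorphism $\StComp \cong \CompOrd$, under which ``perfect'' becomes ``continuous and order-preserving'', whereas you check the two defining conditions of perfectness in $\StComp$ itself.

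The reduction of $\Diamond$ to $\Box$ via de Groot duality, however, has a genuine gap. \Cref{r:dual-vietoris} does give a natural isomorphism $\dG\, \Vu \cong \Vl\,\dG$, and applying $\dG$ to the perfect map $\Box$ certainly yields \emph{some} perfect map $\Vl([0,1]^\downarrow)\to[0,1]^\downarrow$; but you never verify that this map is the map $\Diamond$ defined as $A \mapsto \sup A$. Under the usual identification (compact saturated subsets of $X$ coincide, as sets, with closed subsets of $\dG X$, and the natural isomorphism acts as the identity on points), $\dG\Box$ sends $[a,1]\mapsto a$ and $\varnothing\mapsto 1$. Whether this equals $A\mapsto\sup A$ depends on whether $\sup$ is taken in the usual order on $[0,1]$ or in the specialization order of $[0,1]^\downarrow$ (which is the reverse of the usual one); this is precisely the bookkeeping you flagged as the main difficulty and then did not carry out. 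The paper sidesteps the issue by treating $\Diamond$ in parallel with $\Box$ rather than deriving one from the other, mirroring the structure of \cref{l:BD-cont-order-p}; the safer completion of your argument is to do the same, replacing $(\lambda,1]$ by $[0,\mu)$, $\inf$ by $\sup$, and upsets by downsets throughout.
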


\begin{proof}
    The proof is similar to the proof of \cref{l:BD-cont-order-p}.
\end{proof}

\begin{proposition} \label{p:it-is-all-the-functions-order-upper}
    Let $\mathcal{C}$ be a set of perfect maps from a stably compact space $X$ to $[0,1]^\uparrow$. Suppose that $\mathcal{C}$ is order-separating and closed under pointwise application of any perfect map from a power of $[0,1]^\uparrow$ to $[0,1]^\uparrow$.
    Then $\mathcal{C}$ is the set of perfect maps from $X$ to $[0,1]^\uparrow$.
\end{proposition}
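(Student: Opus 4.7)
The plan is to imitate, essentially verbatim, the arguments of \cref{p:it-is-all-the-functions} and \cref{p:it-is-all-the-functions-order}, now working in $\StComp$ with the dualizing object $[0,1]^\uparrow$ in place of $[0,1] \in \CH$ or $[0,1] \in \CompOrd$. To deploy \cref{p:dual,r:explicit-duality} with $\cat{C} = \StComp$ and $X = [0,1]^\uparrow$, I first need to know that $[0,1]^\uparrow$ is a regular injective regular cogenerator of $\StComp$; this is exactly what underlies the monadicity of $\hom_{\StComp}(-,[0,1]^\uparrow) \colon \StComp^\op \to \Set$ already invoked from \cite{Abbadini2019,AbbadiniReggio2020} in the proof of \cref{t:stcomp-monadicity}, and $\StComp$ is complete since it is isomorphic to $\CompOrd$.

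Let $\iota \colon \mathcal{C} \hookrightarrow \hom_\StComp(X, [0,1]^\uparrow)$ denote the inclusion. The closure hypothesis on $\mathcal{C}$ under pointwise action of perfect maps $[0,1]^{\uparrow\,\kappa} \to [0,1]^\uparrow$ says precisely that $\mathcal{C}$ is a subalgebra in the signature of \cref{p:dual}, hence $\mathcal{C} \in \SP([0,1]^\uparrow)$. By \cref{p:dual,r:explicit-duality} there exist a stably compact space $Y$, an isomorphism $\gamma \colon \hom_\StComp(Y,[0,1]^\uparrow) \to \mathcal{C}$ in $\SP([0,1]^\uparrow)$, and a perfect map $f \colon X \to Y$ with $\iota \circ \gamma = {-} \circ f$.

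Since $\iota \circ \gamma$ is injective it is a monomorphism in $\SP([0,1]^\uparrow)$, so by the duality $f$ is an epimorphism in $\StComp$, and as $[0,1]^\uparrow$ is a regular cogenerator this forces $f$ to be surjective. To see that $f$ is order-reflecting, suppose $f(x) \leq f(y)$ in $Y$; then for every $g \in \hom_\StComp(Y,[0,1]^\uparrow)$ we have
\[
    (\iota \circ \gamma)(g)(x) = g(f(x)) \leq g(f(y)) = (\iota \circ \gamma)(g)(y),
\]
and since the image of $\iota \circ \gamma$ is $\mathcal{C}$, which is order-separating, we conclude $x \leq y$. By \cref{l:regular-mono-char} the map $f$ is thus a regular monomorphism; being additionally surjective it is an isomorphism in $\StComp$. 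Consequently ${-} \circ f = \iota \circ \gamma$ is an isomorphism, and since $\gamma$ is already an isomorphism, $\iota$ is surjective. This shows $\mathcal{C} = \hom_\StComp(X, [0,1]^\uparrow)$.

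The main obstacle I anticipate is only bookkeeping: confirming that $[0,1]^\uparrow$ meets the hypotheses of \cref{p:dual} (completeness of $\StComp$, regular injectivity and cogeneration — all packaged into the earlier monadicity input), and that the passage ``order-reflecting perfect bijection $\Rightarrow$ isomorphism'' in $\StComp$ is legitimate, which is immediate from \cref{l:regular-mono-char} or, equivalently, from transporting the corresponding fact along the isomorphism $\StComp \cong \CompOrd$. The separation step that does the real work — using the order-separating hypothesis to force $f$ to be order-reflecting — is parallel to the $\CompOrd$ case in \cref{p:it-is-all-the-functions-order}; the only new ingredient is the switch from continuous order-preserving maps into $[0,1]$ to perfect maps into $[0,1]^\uparrow$.
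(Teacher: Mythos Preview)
Your proof is correct and is essentially the same argument as the paper's, which dispatches the statement in one line by invoking the isomorphism $\StComp \cong \CompOrd$ to reduce to \cref{p:it-is-all-the-functions-order}; you have simply spelled out that translated argument directly in $\StComp$. The only difference is presentational: the paper transports the whole proposition across the isomorphism, while you rerun the duality-based proof with $[0,1]^\uparrow$ in place of $[0,1]$.
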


\begin{proof}
    Up to the isomorphism between $\CompOrd$ and $\StComp$, this is \cref{p:it-is-all-the-functions-order}.
\end{proof}

The following says that the terms of $\CoAlg(\Vu)^\op$ are generated by the terms of $\StComp^\op$ and $\Box$.

\begin{theorem} \label{t:perfect}
    Let $X$ be a set.
    For every perfect map $f \colon \Vu(([0,1]^\uparrow)^X) \to [0,1]^\uparrow$ there are a(n at most countable) set $Y$, a perfect map $s \colon ([0,1]^\uparrow)^Y \to [0,1]^\uparrow$ and a family $(h_y \colon ([0,1]^\uparrow)^X \to [0,1]^\uparrow)_{y \in Y}$ of perfect maps such that
    \[
    f = s \circ \langle\Box \circ \Vu h_y\rangle_{y \in Y}.
    \]
\end{theorem}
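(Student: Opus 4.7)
The plan is to imitate the proof of the analogous theorem for the convex Vietoris functor. Define $\mathcal{C}$ to be the set of perfect maps $\Vu(([0,1]^\uparrow)^X) \to [0,1]^\uparrow$ that admit a representation of the claimed form, and verify the hypotheses of Proposition \ref{p:it-is-all-the-functions-order-upper}. That every element of $\mathcal{C}$ is perfect follows from Lemma \ref{l:BD-cont-order-p-perfect} together with the facts that $\Vu$ preserves perfect maps and that perfect maps are closed under composition and tupling. Closure of $\mathcal{C}$ under pointwise application of a perfect $t \colon ([0,1]^\uparrow)^I \to [0,1]^\uparrow$ is routine bookkeeping: given $f_i = s_i \circ \langle \Box \circ \Vu h_{i,y} \rangle_{y \in Y_i}$ in $\mathcal{C}$ for each $i \in I$, one sets $Y \coloneqq \bigsqcup_{i \in I} Y_i$ (relabelling the $h_{i,y}$ to a single family $(h_y)_{y \in Y}$) and takes $s \coloneqq t \circ \langle s_i \rangle_{i \in I} \colon ([0,1]^\uparrow)^Y \to [0,1]^\uparrow$, which is perfect as a composite and tupling of perfect maps.

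The hard step, and the only genuinely topological one, is showing that $\mathcal{C}$ is order-separating. Suppose $K, L \in \Vu(([0,1]^\uparrow)^X)$ with $K \not\leq L$. Since the specialization order on $\Vu$ is \emph{reverse} inclusion, this means $K \not\supseteq L$, so there is $y \in L \setminus K$. Because $K$ is saturated, it is upward closed in the specialization order of the product, and hence $\d y \cap K = \varnothing$. Translating through the isomorphism $\StComp \cong \CompOrd$, the space $([0,1]^\uparrow)^X$ corresponds to the compact ordered space $[0,1]^X$; there, $K$ becomes a closed upset and $\d y$ a closed downset (the latter by Lemma \ref{l:up-set-closed}), and the two are disjoint. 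The ordered Urysohn lemma then produces a continuous order-preserving map $h \colon [0,1]^X \to [0,1]$ with $h[K] \subseteq \{1\}$ and $h(y) = 0$; in $\StComp$ this corresponds to a perfect map $h \colon ([0,1]^\uparrow)^X \to [0,1]^\uparrow$. Then
\[
(\Box \circ \Vu h)(K) = \inf h[K] = 1 \quad\text{and}\quad (\Box \circ \Vu h)(L) \leq h(y) = 0,
\]
so $\Box \circ \Vu h \in \mathcal{C}$ witnesses $K \not\leq L$ in the specialization order of $[0,1]^\uparrow$ (which is the usual order on $[0,1]$).

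Finally, the cardinality bound $|Y| \leq \aleph_0$ follows because, under the isomorphism $\StComp \cong \CompOrd$, each perfect map from a power of $[0,1]^\uparrow$ to $[0,1]^\uparrow$ corresponds to a continuous order-preserving map from a power of $[0,1]$, which depends on at most countably many coordinates. The main obstacle to anticipate is handling the asymmetry caused by the reverse-inclusion specialization order on $\Vu$: one must make sure to separate $K$ (an upset) from $\d y$ (a downset) rather than the other way around, and to check that the direction of the inequality computed by $\Box$ is consistent with this choice. The dualization to $\Vl$ proceeds along the same lines with the roles of $\Box$ and $\Diamond$, upsets and downsets, and the inequalities swapped.
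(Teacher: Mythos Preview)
Your proposal is correct and follows essentially the same route as the paper's proof: define $\mathcal{C}$, verify perfectness via Lemma~\ref{l:BD-cont-order-p-perfect}, check closure under pointwise application of perfect maps, and then use the ordered Urysohn lemma to show order-separation, invoking Proposition~\ref{p:it-is-all-the-functions-order-upper}. Your argument is in fact slightly more detailed than the paper's in two places---you spell out the bookkeeping for closure under pointwise application, and you justify explicitly why $\d y$ and $K$ are disjoint before applying Urysohn---but the strategy and key computations are the same.
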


\begin{proof}
    Let $\mathcal{C}$ be the set of perfect maps $f \colon \Vu(([0,1]^{\uparrow})^X) \to [0,1]^\uparrow$ for which there are a set $Y$, a perfect map $s \colon ([0,1]^\uparrow)^Y \to [0,1]^\uparrow$ and a family $(h_y \colon ([0,1]^\uparrow)^X \to [0,1]^\uparrow)_{y \in Y}$ of perfect maps such that $f = s \circ \langle\Box \circ \Vu h_y\rangle_{y \in Y}$.
    We shall prove that $\mathcal{C}$ is the set of perfect maps from $\Vu(([0,1]^\uparrow)^X)$ to $[0,1]^\uparrow$.
    As a consequence of \cref{l:BD-cont-order-p-perfect}, every function in $\mathcal{C}$ is perfect.
    Since $\mathcal{C}$ is closed under every perfect map from some power of $[0,1]^\uparrow$ to $[0,1]^\uparrow$, by \cref{p:it-is-all-the-functions-order-upper} it is enough to prove that $\mathcal{C}$ is order-separating, i.e.\ for all distinct $K, L \in \Vu(([0,1]^\uparrow)^X)$ with $K \not\leq L$ (i.e.\ $L \nsubseteq K$) there is a function $f \in \mathcal{C}$ such that $f(K) \not\leq f(L)$.
    Suppose $L \nsubseteq K$.
    Then there is $y \in L \setminus K$.
    By the ordered Urysohn's lemma, there is a perfect map $h \colon ([0,1]^\uparrow)^X \to [0,1]^\uparrow$ such that $h(y) = 0$ and $h[K] \subseteq \{1\}$.
    Then,
    \begin{align*}
        (\Box \circ \Vu h) (K) & = \Box (\Vu h (K)) = \Box (\u h[K]) = \inf \u h[K] = \inf h[K] = 1,\\
        (\Box \circ \Vu h) (L) & = \Box (\Vu h (L)) = \Box (\u h[L]) = \inf \u h[L] = \inf h[L] = 0,
    \end{align*}
    Thus, we have found a function $f = \Box \circ \Vu h \in \mathcal{C}$ such that $f(K) \nleq f(L)$.
    Hence, $\mathcal{C}$ is order-reflecting.
	
    ``At most countability'' follows from the fact that every perfect map $([0,1]^\uparrow)^Y \to [0,1]^\uparrow$ depends on at most countably many coordinates.
\end{proof}

The following says that the terms of $\CoAlg(\Vl)^\op$ are generated by the terms of $\StComp^\op$ and $\Diamond$.

\begin{theorem}
    Let $X$ be a set.
    For every perfect map $f \colon \Vl(([0,1]^{\downarrow})^X) \to [0,1]^\downarrow$ there exist an (at most countable) set $Y$, a perfect map $s \colon ([0,1]^\downarrow)^Y \to [0,1]$ and a family $(h_y \colon ([0,1]^\downarrow)^X \to [0,1]^\downarrow)_{y \in Y}$ of perfect maps such that
    \[
    f = s \circ \langle\Diamond \circ \Vu h_y \rangle_{y \in Y}.
    \]
\end{theorem}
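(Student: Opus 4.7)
The plan is to mirror the proof of Theorem~\ref{t:perfect}, with $\Vl$ and $\Diamond$ playing the roles of $\Vu$ and $\Box$; indeed the statement is nothing other than the de Groot dual of that result (in the sense of Remark~\ref{r:dual-vietoris}, which swaps $\Vu \leftrightarrow \Vl$ and $[0,1]^\uparrow \leftrightarrow [0,1]^\downarrow$, and carries $\Box$ to $\Diamond$). Concretely, I would define $\mathcal{C}$ to be the set of perfect maps $f \colon \Vl(([0,1]^\downarrow)^X) \to [0,1]^\downarrow$ admissible in the prescribed form $s \circ \langle\Diamond \circ \Vl h_y\rangle_{y \in Y}$. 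Every such $f$ is perfect by Lemma~\ref{l:BD-cont-order-p-perfect}, and $\mathcal{C}$ is patently closed under postcomposition with (equivalently, pointwise application of) any perfect map from a power of $[0,1]^\downarrow$ to $[0,1]^\downarrow$.

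Via the isomorphism $\StComp \cong \CompOrd$, Proposition~\ref{p:it-is-all-the-functions-order-upper} applies equally with $[0,1]^\downarrow$ in place of $[0,1]^\uparrow$, so it suffices to show $\mathcal{C}$ is order-separating for the specialization order on $\Vl(([0,1]^\downarrow)^X)$, which by Definition~\ref{d:upper-lower-Vietoris} is ordinary inclusion. Suppose $K, L \in \Vl(([0,1]^\downarrow)^X)$ satisfy $K \not\leq L$, i.e.\ $K \nsubseteq L$, and pick $y \in K \setminus L$. I would then pass to the associated compact ordered space and apply the ordered Urysohn's lemma \cite[Theorem~1, p.~30]{Nachbin1965} to produce a perfect map $h \colon ([0,1]^\downarrow)^X \to [0,1]^\downarrow$ with $h(y) = 1$ and $h[L] \subseteq \{0\}$. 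A direct computation, entirely analogous to that in the proof of Theorem~\ref{t:perfect}, yields
\[
(\Diamond \circ \Vl h)(K) = \sup h[K] = 1 \quad \text{and} \quad (\Diamond \circ \Vl h)(L) = \sup h[L] = 0,
\]
and since the specialization order on $[0,1]^\downarrow$ is the reverse of the Euclidean order, this is precisely the inequality $(\Diamond \circ \Vl h)(K) \not\leq (\Diamond \circ \Vl h)(L)$ needed for order-separation. Thus $\Diamond \circ \Vl h \in \mathcal{C}$ separates $K$ from $L$.

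The at-most-countability of the indexing set $Y$ follows, exactly as in Theorem~\ref{t:perfect}, from the fact that every perfect map from a power of $[0,1]^\downarrow$ into $[0,1]^\downarrow$ factors through the projection onto countably many coordinates.

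The main obstacle, and really the only subtle point, is bookkeeping of the two competing order orientations: the specialization order on $\Vl$ is inclusion (not reverse inclusion as for $\Vu$), while the specialization order on $[0,1]^\downarrow$ is the reverse of the Euclidean order. One must ensure that the Urysohn map produces its values at the right extreme and that the resulting inequality of $\sup$'s is the one actually prescribed by the specialization order of the codomain. Once these orientations are lined up, the argument is structurally identical to the $\Vu$ case.
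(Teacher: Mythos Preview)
Your overall strategy---mirror \cref{t:perfect} via de Groot duality---is exactly what the paper does; its proof reads, in full, ``The proof is analogous to the proof of \cref{t:perfect}.''

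That said, the orientation bookkeeping you flag as ``the main obstacle'' does not actually come out right in your execution. Your computation $(\Diamond \circ \Vl h)(L) = \sup h[L] = 0$ ignores that $(\Vl h)(L)$ is by definition the down-closure of $h[L]$ in the specialization order of $[0,1]^\downarrow$; since that order is reverse-Euclidean, this is the Euclidean \emph{up}-closure, so from $h[L] \subseteq \{0\}$ one gets $(\Vl h)(L) = [0,1]$, whose Euclidean supremum is $1$, not $0$. And even granting your values $f(K) = 1$ and $f(L) = 0$, in the specialization order of $[0,1]^\downarrow$ one has $1 \leq 0$, so $f(K) \leq f(L)$---the opposite of what order-separation requires.

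To make the argument go through you must flip the Urysohn values: since $L$ is closed in $([0,1]^\downarrow)^X$ it is a closed Euclidean \emph{up}set, and ordered Urysohn separates $y \notin L$ from $L$ by a Euclidean-order-preserving $h$ with $h(y) = 0$ and $h[L] \subseteq \{1\}$. Correspondingly, for $\Diamond \circ \Vl h$ to detect this, $\sup$ in the definition of $\Diamond$ has to be read as supremum in the specialization order of $[0,1]^\downarrow$ (equivalently, Euclidean infimum). With those choices $(\Diamond \circ \Vl h)(K) = \inf h[K] = 0$ and $(\Diamond \circ \Vl h)(L) = 1$, giving $f(K) \not\leq f(L)$ in the specialization order, as required.
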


\begin{proof}
    The proof is analogous to the proof of \cref{t:perfect}.
\end{proof}


\subsection*{Acknowledgements}
We are grateful to the organizers of the CT20$\to$21 in Genova for giving us the opportunity to start this collaboration. We also thank the referee for their careful reading and suggestions, which improved the paper.

Marco Abbadini is grateful to Luca Reggio for some important suggestions.
Marco Abbadini's research was supported by the Italian Ministry of University and Research through the PRIN project n.\ 20173WKCM5 \emph{Theory and applications of resource sensitive logics}.

Ivan Di Liberti was supported by the Swedish Research Council (SRC, Vetenskapsrådet) under Grant No. 2019-04545. The research has received funding from the Knut and Alice Wallenberg Foundation through the Foundation’s program for mathematics.

\bibliography{Biblio}
\bibliographystyle{plain}
\end{document}